\newtheorem{theorem}{Theorem}[section]
\newtheorem{example}{Example}%
\newtheorem{remark}{Remark}%
\newtheorem{lemma}{Lemma}[section]%
\newtheorem{assumption}{Assumption}[section]%
\newenvironment{proofs}{\noindent \textit{Proof }}{\hfill$\square$}
\def\yx#1{{\color{red} #1}}
\def\sM{\mathcal{M}} 
\def\bR{\mathbb{R}}
\begin{document}

\title[On the resolution of $\ell_1$-norm minimization via a two-metric adaptive projection method]{On the resolution of $\ell_1$-norm minimization via a two-metric adaptive projection method}


\author[1]{\fnm{Hanju} \sur{Wu}}

\author*[1,2]{\fnm{Yue} \sur{Xie}}\email{yxie21@hku.hk}


\affil[1]{\orgdiv{Department of Mathematics}, \orgname{The University of Hong Kong}, \orgaddress{\street{Pokfulam}, \city{Hong Kong}
}}

\affil[2]{\orgdiv{Institute of Data Science}, \orgname{The University of Hong Kong}, \orgaddress{\street{Pokfulam}, \city{Hong Kong}
}}



\abstract{\fontsize{11}{15}\selectfont
In this work, we propose an efficient two-metric adaptive projection method for solving the $\ell_1$-norm minimization problem. Our approach is inspired by the two-metric projection method, a simple yet elegant algorithm proposed by Bertsekas for bound/box-constrained optimization problems. The low per-iteration cost of this method, combined with the ability to incorporate Hessian information, makes it particularly attractive for large-scale problems, and our proposed method inherits these advantages. 
Previous attempts to extend the two-metric projection method to $\ell_1$-norm minimization rely on an intermediate reformulation as a bound-constrained problem, which can lead to numerical instabilities in practice, in sharp contrast to our approach.  Our algorithm features a refined partition of the index set,  an adaptive projection, and a novel linesearch rule.  It can accommodate singular Hessians as well as inexact solutions to the Newton linear system for practical implementation.  We show that our method is theoretically sound - it has global convergence.  Moreover, it is an active-set method capable of manifold identification: the underlying low-dimensional structure can be identified in a finite number of iterations, after which the algorithm reduces to an unconstrained Newton method on the identified subspace. Under an error bound condition,  the method attains a locally superlinear convergence rate. Hence, when the solution is sparse, it achieves superfast convergence in terms of iterations while maintaining scalability, making it well suited for large-scale problems.
We conduct extensive numerical experiments to demonstrate the practical advantages of the proposed algorithm over several competitive methods from the literature, particularly in large-scale settings. }

\pacs[MSC Classification]{\fontsize{11}{15}\selectfont 49M15, 49M37, 68Q25, 90C06, 90C25, 90C30}

\keywords{\fontsize{11}{15}\selectfont $\ell_1$-norm minimization, Two-metric (adaptive) projection method, Manifold identification, Error bound, Superlinear convergence}


\maketitle

\fontsize{11}{15}\selectfont
\section{Introduction}
\label{introduction}

In this paper, we consider the $\ell_1$-norm minimization problem:
\begin{equation}\label{l1min}
        \min_{x \in \mathbb{R}^n} \psi(x) \triangleq f(x) + h(x),
\end{equation}
where $f: \mathbb{R}^n \to \mathbb{R}$ is continuously differentiable, and $h(x) = \gamma \| x \|_1$. 

The $\ell_1$-norm regularization is widely used in machine learning and statistics to promote sparsity in the solution. Many algorithms have been proposed to solve the $\ell_1$-norm minimization problem, such as the proximal gradient method \citep{beck2009fast}, the accelerated proximal gradient method \citep{nesterov2013gradient}, the alternating direction method of multipliers (ADMM) \citep{boyd2011distributed}, and the coordinate descent method \citep{wright2015coordinate}. 
However, these first-order methods at best have linear convergence rate and may converge slowly near the optimal point \citep{tseng2010approximation,tseng2009coordinate,hong2017linear}. 
Second-order methods are more desirable when seeking highly accurate solution.
When $f$ is a twice continuously differentiable function, it is natural to expect
algorithms to have faster convergence rates by exploiting the Hessian information of $f$
at each iterate $x_k$. 
Some successful second-order algorithms with superlinear convergence rate have been proposed to solve the general composite optimization problem in the form of problem \eqref{l1min}, such as the successive quadratic approximation (SQA) methods \citep{yue2019family,lee2023accelerating,mordukhovich2023globally}, the semi-smooth Newton methods \citep{hu2022local,xiao2018regularized,li2018highly,milzarek2014semismooth} and some active set methods \citep{bareilles2023newton,doi:10.1137/24M1697001} based on the notions of {\it partial smoothness} and {\it identifiability} \citep{lewis2018partial,lewis2024identifiability,lewis2022partial}.

In this work, we propose a two-metric adaptive projection method to address problem \eqref{l1min}. The original two-metric projection method is proposed by Bertsekas and Gafni \citep{bertsekas1982projected,gafni1984two} to solve the bound-constrained optimization problem:
\begin{equation}\label{bdmin}
    \begin{array}{rl}
        \min\; f(x) \quad \text {subject to} \; x \geq 0.
    \end{array}
\end{equation}
Two-metric projection method is a kind of active set method which can be described as follows:
\begin{align}\label{alg: 2mproj}
    x_{k+1} \triangleq P(x_k - t_k D_k \nabla f(x_k)),
\end{align}
where $D_k$ is a positive definite matrix, and $P(\cdot)$ denotes the Euclidean projection onto the nonnegative orthant. Note that $P(\cdot)$ has a simple closed form: given any vector $v \in \mathbb{R}^n$, $i$th component of $P(v)$ is $\max\{ v^i, 0\}$. 
Therefore, the method has a low per-iteration cost if $D_k$ is provided. When $D_k$ is constructed using the Hessian information, the algorithm potentially converges fast. However, there is an issue regarding \eqref{alg: 2mproj}: starting from the current iterate $x_k$, the objective function value may not decrease for any small stepsize $t_k > 0$ even when $D_k$ is chosen as inverse of the Hessian. 
This means that a naive projected Newton step may fail. To solve this issue, we need to divide the index set into two parts in each iteration:
\begin{align}
    \label{index}
    I_k^+ & \triangleq \{ i \mid 0 \le x_k^i \le \epsilon_k, \nabla_i f(x_k) > 0 \}, \\
    I_k^- & \triangleq \{ 1,\ldots,d \} \setminus I_k^+,
\end{align}
and choose $D_k$ such that it is diagonal regarding $I_k^+$, i.e.:
\begin{align}
    \label{Dk-rules}
    D_k[i,j] = 0, \forall i,j \in I_k^+ \mbox{ and } i \neq j.  
\end{align}
The problem is fixed via this approach.  {In particular, $I_k^+$ includes the coordinates where the iterate is sufficiently close to the boundary and the negative gradient points to the boundary.} Moreover, if the non-diagonal part of $D_k$ is constructed based on the Hessian inverse, it can be shown that under the assumption of second-order sufficient conditions, 
the sequence of iterates identifies the active constraints in finite number of iterations and is attracted to a nondegenerate critical point (critical point satisfies strict complementarity) at a superlinear rate. 

Note that the $\ell_1$-norm minimization problem (\ref{l1min}) is equivalent to the following bound-constrained optimization problem \eqref{eq-bound} when splitting $x$ into non-negative variables representing positive and negative components of $x$. 
\begin{align}
    \label{eq-bound}
    \begin{array}{rl}
    \min\limits_{y, z \in \mathbb{R}^n} & F(y,z) \triangleq f\left(y-z\right)+\gamma \sum_{i}\left(y_{i} +z_{i} \right) \\    
    \text { s.t. } 
        & y \geq 0, \quad z \geq 0.
\end{array}
\end{align}
Two-metric projection can be applied to this
bound-constrained reformulation to resolve problem \eqref{l1min}.
Unfortunately, this approach has a flaw.  To obtain fast convergence, we often use Newton's method: build non-diagonal part of $D_k$ using the Hessian inverse. However, the corresponding submatrix of $\nabla^2 F(y,z)$ can be singular even $\nabla^2 f$ is globally positive definite.  For example, suppose that $f(x) \triangleq (x-1)^2/2$ and $h(x) \triangleq |x| $.  Then $F(y,z) = (y-z-1)^2/2$,  $\nabla F = \begin{pmatrix}
y-z \\ -y+z + 2 \\
\end{pmatrix}$ and $\nabla^2 F = \begin{pmatrix}
1 & -1 \\ -1 & 1 \\
\end{pmatrix}$ is singular.  Suppose that $y_k = 0, z_k > \epsilon_k > 0$, then $I_k^+ = \emptyset$, $I_k^- = \{1,2\}$.  Therefore, we cannot use inverse of $\nabla^2 F$ to build nondiagonal part of $D_k$.  What is worse,  if we try to solve the equation
\begin{align*}
\nabla^2 F (y_k,z_k) \cdot p = \nabla F (y_k,z_k),
\end{align*}
we will find that the solution does not exist.  Therefore, approximation approach of the Hessian leads to numerical instability.  More detailed and generalized discussions are given in Section~\ref{sec: prelim}.
Also note that the global convergence and local superlinear convergence rate of the two-metric projection method require the eigenvalues of $D_k$ to be uniformly bounded away from zero and infinity (and hence the eigenvalues of the Hessian must also be uniformly bounded from below and above), and the resulting linear system must be solved exactly. These conditions are rarely satisfied in practice.

Such limitations motivate us to develop a more robust and stable alternative with a sound convergence analysis framework applicable to a wide range of real-world problems.
We propose a novel two-metric adaptive projection method that solves problem \eqref{l1min} directly without resorting to the intermediate bound-constrained reformulation \eqref{eq-bound}, thereby circumventing the aforementioned flaw.  We consider a refined partition of the index set, incorporate the adaptive projection, and design a novel linesearch rule.  Moreover, we do not require the Hessian to be uniformly positive definite nor the linear system to be solved exactly, enabling efficient and practical implementation for solving problem~\eqref{l1min}.  Despite these practical improvements, the algorithm preserves the ability to identify the active manifold (the sparsity structure of the optimal point).
This identification property is computationally valuable because once the active manifold is identified, problem~\eqref{l1min} reduces locally to a low-dimensional smooth optimization problem, and only a low-dimensional linear system needs to be solved per iteration.
After manifold identification, our algorithm {\it automatically} reduces to a Newton method on the active manifold with superlinear convergence to the optimal solution.  {It is worth mentioning that such transition happens spontaneously without testing whether the active manifold is already identified, nor adding a safeguard via some first-order algorithm, in contrast to some existing algorithms in literature \citep{bareilles2023newton,lee2023accelerating}.  In practice,  it is hard to verify that the active manifold is identified with certainty.  In \citep{sun2019we},  the complexity of active manifold identification is derived,  but it requires prior knowledge of the optimal solution and such complexity can be too conservative to apply.}

Our convergence analysis relies on the error-bound (EB) condition, which is weaker than second-order sufficient conditions and holds for a broad class of objective functions \citep{zhou2017unified}. The work \citep{yue2019family} develops an EB-based analysis framework with inexactness conditions for successive quadratic approximation (SQA) methods, which is subsequently applied to semi-smooth Newton methods \citep{hu2022local}.
\citep{lee2023accelerating} established convergence results under local sharpness conditions, a special case of the Kurdyka-\L ojasiewicz (KL) condition.
Notably, in the convex setting,  the KL condition with exponent $1/2$ is equivalent to the EB condition \citep{liao2024error}.
\citep{mordukhovich2023globally} proved local superlinear convergence for their SQA method under metric subregularity of the subdifferential mapping, which is equivalent to the EB condition \citep{drusvyatskiy2018error}.
Thus, the local superlinear convergence guarantees of existing second-order methods are established under conditions equivalent to or generalizing the EB condition.

{\bf Contribution} We summarize our main contributions in this paper as follows:
\begin{enumerate}  \fontsize{11}{15}\selectfont
  \item We propose a novel two-metric adaptive projection method that solves the $\ell_1$-norm minimization problem \eqref{l1min} directly. The proposed method is straightforward to implement and maintains a low per-iteration computational cost. It avoids the drawback that arises when applying the original two-metric projection method to the equivalent bound-constrained formulation \eqref{eq-bound} and removes stringent requirements impeding practicality.
  \item We provide a comprehensive theoretical analysis of the proposed algorithm, establishing global convergence, manifold identification, and local superlinear convergence rates under assumptions applicable to a broad problem class including the  logistic regression and LASSO problems. 
  \item We conduct extensive numerical experiments on large-scale logistic regression and LASSO problems to demonstrate the efficiency and robustness of the proposed algorithm, in comparison with several competitive state-of-the-art methods.
\end{enumerate}

{\bf Organization} The remainder of this paper is organized as follows: In Section~\ref{sec: prelim} we review the relevant background knowledge. 
In Section~\ref{sec: tmap} the proposed algorithm is formally introduced in detail. Section~\ref{sec: conv} presents the theoretical analysis.  In Section~\ref{sec: num} we discuss the experimental results. In Section~\ref{sec: con} we conclude the paper and discuss the future work.
Finally, we show the proofs of some theoretical results in Appendix~\ref{sec: appendixA}.

\textbf{Notation} Suppose that $X^*$ denotes the optimal solution set of problem \eqref{l1min}. $\| \cdot \|$ denotes the $\ell_2$-norm for vectors or operator norm for matrices. For a matrix $A$ (vector $a$), let $[A]_{\mathcal{I} \times \mathcal{I}}$ ($[a]_{\mathcal{I}}$) denote the submatrix (subvector) of $A$ ($a$) w.r.t index set $\mathcal{I}$. $a^i$ denote the $i$th component of $a$.
$|\mathcal{I}|$ denotes the cardinality of index set $\mathcal{I}$.
$\mathbf{1}_k$ denotes the vector of all elements equal to 1 in $\mathbb{R}^k$.
For any point $x$ and set $S$, let ${\rm dist}(x,S) \triangleq \inf\{ \| x - y \| \mid y \in S \}$. For a convex function $f: \mathbb{R}^b \to \mathbb{R}$, $\partial f$ denotes its subdifferential set, i.e., $\partial f(x) \triangleq \{ g \mid f(y) \ge f(x) + g^T(y-x),\; \forall y \in \mathbb{R}^b \}$. 
For a smooth function $f$ on $\mathbb{R}^n$, $\nabla f$ and $\nabla^2 f$ denote its Euclidean gradient and Hessian, respectively. $\nabla_i f$ denotes the $i$th coordinate of $\nabla f$.
For a smooth function $f$ on a manifold $\mathcal{M}$, $\operatorname{grad}_{\mathcal{M}} f$ and $\operatorname{Hess}_{\mathcal{M}} f$ denote its Riemannian gradient and Hessian, respectively.  $F|_{\sM}$ denotes the restriction of function $F$ on manifold $\sM$.

\section{Preliminaries}\label{sec: prelim}
In this section we introduce some basic concepts to be used in the algorithm design and analysis in this paper. We will also explain in detail the limitation of the original two-metric projection method when applied to the equivalent bound-constrained formulation.

\noindent\textbf{Optimality conditions and critical points} If $x^*$ is an optimal point of problem \eqref{l1min} then,
    \begin{equation}
        \label{optimal-condition}
        0 \in \nabla f(x^*) + \partial h(x^*)
    \end{equation}
    (see \citep{attouch2010proximal}). Specifically,  when $h(x) = \gamma \| x \|_1$, (\ref{optimal-condition}) is equivalent to
    \begin{equation} \label{optcon}
        \nabla_{i} f\left(x^{*}\right)=\left\{\begin{array}{ll}
-\gamma, & (x^{*})^i>0, \\
a \in[-\gamma, \gamma], \quad & (x^{*})^i=0, \\
\gamma, & (x^{*})^i <0 .
\end{array}\right.
    \end{equation}
If $x^*$ satisfies the first-order necessary conditions \eqref{optimal-condition}, we say that $x^*$ is a {\it critical (stationary) point} of problem \eqref{l1min}.  Note that when $f$ is a convex function, the necessary optimality condition \eqref{optimal-condition} is also sufficient (see \cite{wright2022optimization}). 

\noindent\textbf{Proximity operator}
For a function $h$, the {\it proximity operator} is defined as the set-valued mapping
\begin{equation*}
    \operatorname{prox}_{h}(x) \triangleq  \underset{u \in \operatorname{dom} h}{\arg \min }\left\{h(u)+\frac{1}{2}\|u-x\|^{2}\right\} .
\end{equation*}
For $h(x) = \gamma \| x \|_1$, the proximity operator has a closed form:
\begin{equation*}
    \left(\operatorname{prox}_{\alpha h}(x)\right)^{i}=\left\{\begin{array}{ll}
        x^i - \alpha \gamma, & x^i > \alpha \gamma, \\
        0, & |x^i| \leq \alpha \gamma, \\
        x^i + \alpha \gamma, & x^i < -\alpha \gamma,
    \end{array}\right.
\end{equation*}
which is also known as the {\it soft-thresholding} operator.

\noindent\textbf{Equivalent optimality conditions}
Note that $x^*$ is a critical point iff for any $\alpha > 0$,
\begin{align*}
    x^* = \operatorname{prox}_{\alpha h} (x^* - \alpha \nabla f(x^*)).
\end{align*}
More details can be found in \citep{beck2017first}.

\noindent\textbf{Active set and active manifold}
The ideas of active set and active manifold were first introduced in constrained optimization, and have been extended to nonsmooth optimization. In our context,  for any $x \in \mathbb{R}^n$, the {\it active set} is defined as 
\[\mathcal{A}(x) \triangleq \left\{i \mid x^i=0\right\},\]
and the {\it active manifold} at $x \in \bR^n$ is defined as
\[
        \mathcal{M}_{\mathcal{A}(x)} \triangleq\left\{x \in \mathbb{R}^{n}: x^{i}=0 \text { for } i \in \mathcal{A}(x)\right\}.
        \]
Suppose that $x^*$ is an optimal point of \eqref{l1min},  the active manifold associated with $x^*$ is denoted as $\mathcal{M}_{*} \triangleq \mathcal{M}_{\mathcal{A}(x^*)}$. Note that $\mathcal{M}_{*}$ is a subspace of $\mathbb{R}^n$,  and although $\psi(x)$ may not be smooth around $x^*$ in $\mathbb{R}^n$,  it is locally smooth around $x^*$ on $\mathcal{M}_{*}$.
Therefore, we can consider the calculus on the manifold $\mathcal{M}_{*}$ around $x^*$. 

\noindent{\bf Calculation of Riemannian gradient/Hessian}
For $\mathcal{M}_{*}$ defined in our context, $T_{x} \mathcal{M}_{*}$ denotes the tangent space of $\mathcal{M}_{*}$ at $x \in \mathcal{M}_{*}$. Then we have
    \begin{equation*}
        T_{x} \mathcal{M}_{*}=\mathcal{M}_{*}.
    \end{equation*}
$\mathcal{M}_{*}$ can be viewed as an embedded submanifold of $\mathbb{R}^n$. Then for $x \in \mathcal{M}_*$, $\eta \in T_x \mathcal{M}_*$ and a function $F$ which is second-order smooth, the Riemannian gradient and Hessian of $F$ at $x$ w.r.t.  $\mathcal{M}_*$ can be written as follows,  
    \begin{align}\label{def: Rgrad}
            \operatorname{grad}_{\mathcal{M}_{*}} F(x) & =\operatorname{proj}_{x}(\nabla F(x)), \\
            \label{def: RHes}
        \operatorname{Hess}_{\mathcal{M}_{*}} F(x)[\eta] & =\operatorname{proj}_{x} \left(\nabla^{2} F(x) \eta \right),
    \end{align}
where $\operatorname{proj}_x$ is the Euclidean projection operator onto $T_x \mathcal{M}_{*}$ (In case when $F$ is not smooth around $x$ on $\mathbb{R}^n$, we can replace $\nabla F$ and $\nabla^2 F$ with $\nabla \tilde F$ and $\nabla^2 \tilde F$,  respectively,  where $\tilde F$ is a second-order smooth extension of $F|_{\mathcal{M}_*}$).

More details about the manifold and the Riemannian optimization on the manifold can be found in \citep{absil2008optimization,boumal2023intromanifolds} and Appendix~\ref{app: prelim}.

\noindent\textbf{Equivalent bound-constrained formulation of problem \eqref{l1min}}
Note that by splitting the variable into the difference of two nonnegative vectors,  we can reformulate problem \eqref{l1min} as \eqref{eq-bound}.
Let $x \triangleq y - z$ in \eqref{eq-bound}.  The gradient and Hessian of $F(y,z)$ are given by
\begin{align*}
    \nabla^2 F(y , z) & = \begin{bmatrix}
       \nabla^2 f(x) & -\nabla^2 f(x) \\
       -\nabla^2 f(x) & \nabla^2 f(x)
      \end{bmatrix}, \quad
      \nabla F(y, z) = \begin{bmatrix}
       \nabla f(x) + \gamma {\mathbf{1}_n} \\
       - \nabla f(x) + \gamma {\mathbf{1}_n}
        \end{bmatrix}.
\end{align*}
The authors in \citep{schmidt2007fast,schmidt2010graphical} first noticed this equivalence and proposed \textbf{ProjectionL1} (the two-metric projection method) to solve problem \eqref{eq-bound}. Note that $\nabla^2 F(y,z)$ is singular even if $\nabla^2 f(x)$ is positive definite.

\noindent\textbf{Limitation of ProjectionL1 when solving \eqref{eq-bound}}
To illustrate the drawback, we present the formula when applying the two-metric projection method to \eqref{eq-bound}.  Recall the two-metric projection method in \eqref{alg: 2mproj}-\eqref{Dk-rules}. Suppose that $(y_k; z_k)$ denotes the $k$th iterate.  At $k$th iteration,  $I_{k,y}^{-}$ and $I_{k,z}^{-}$ denote the ``$I_k^-$ part'' corresponding to $y_k$ and $z_k$ respectively.
Let $x_k \triangleq y_k - z_k$. 
Then given an $\epsilon_k > 0$, we have
\begin{align}\label{def: ind.2m.1}
    I_{k,y}^- & \triangleq \{i\mid 0 \leq (y_{k})^i\leq \epsilon_k, \nabla_i f(x_k)+\gamma \leq 0 \} \cup \{i\mid (y_{k})^i > \epsilon_k \}, \\
    \label{def: ind.2m.2}
    I_{k,z}^- & \triangleq \{i\mid 0 \leq (z_{k})^i\leq \epsilon_k, -\nabla_i f(x_k)+\gamma \leq 0 \} \cup \{i\mid (z_{k})^i > \epsilon_k \}.
\end{align}
Note that $I_{k,y}^-$ and $I_{k,z}^-$ are subsets of $\{1,2,\ldots,n\}$. $I_{k,y}^- \cap I_{k,z}^- \ne \emptyset$ can happen.

To get fast covergence,  non-diagonal part of $D_k$, i.e., $[D_k]_{I_k^- \times I_k^-}$ is chosen as inverse of the corresponding Hessian submatrix $[\nabla^2 F(y_k,z_k)]_{I_k^- \times I_k^-}$.  However, this submatrix can be singular even if $\nabla^2 f$ is positive definite.  What is worse, in this case the corresponding linear system ($ [\nabla^2 F(y_k,z_k)]_{I_k^- \times I_k^-} p = [\nabla F(y_k,z_k)]_{I_k^-}$) does not have a solution for any $\gamma > 0$, $(y_k,z_k)$, and the function $f$. More specifically, the linear system can be written as follows using $I_{k,y}^-$, $I_{k,z}^-$, $p \triangleq (p_y;p_z)$,
\begin{align}
    \begin{bmatrix}
       [\nabla^2 f(x_k)]_{I_{k,y}^- \times I_{k,y}^-} & -[\nabla^2 f(x_k)]_{I_{k,y}^-\times I_{k,z}^-}\\
       -[\nabla^2 f(x_k)]_{I_{k,z}^- \times I_{k,y}^-} & [\nabla^2 f(x_k)]_{I_{k,z}^- \times I_{k,z}^-}
      \end{bmatrix}
      \begin{bmatrix}
        p_y \\
        p_z
       \end{bmatrix} = 
       \begin{bmatrix}
        [\nabla f(x_k)]_{I_{k,y}^-} +\gamma {\mathbf{1}}_{|I_{k,y}^-|} \\
        - [\nabla f(x_k)]_{I_{k,z}^-} +\gamma {\mathbf{1}}_{|I_{k,z}^-|}
       \end{bmatrix}.  \label{newton-eq}
\end{align}

Suppose that $I_{k,y}^- \cap I_{k,z}^- \ne \emptyset $,  and $i \in I_{k,y}^- \cap I_{k,z}^-$. Then the linear system (\ref{newton-eq}) include the following two equations,
\begin{align*}
    & [[\nabla^2 f(x_k)]_{i \times I_{k,y}^{-} }] p_y - [[\nabla^2 f(x_k)]_{i \times I_{k,z}^{-} }] p_z = [\nabla f(x_k)]_{i} + \gamma,\\
     -&[[\nabla^2 f(x_k)]_{i \times I_{k,y}^{-} }] p_y + [[\nabla^2 f(x_k)]_{i \times  I_{k,z}^{-} }] p_z = -[\nabla f(x_k)]_{i} + \gamma,
\end{align*}
which implies that $\gamma = 0$. This contradicts the assumption that $\gamma > 0$. Therefore, when $I_{k,y}^{-} \cap I_{k,z}^{-} \ne \emptyset $, the linear system (\ref{newton-eq}) is unsolvable.  This defect in the algorithm design is intrinsic. Note that we may add perturbation to the singular Hessian or use other approximation approaches for salvage. However, since the original linear
system may not have a solution, the approximation approaches suffer from numerical instability.  An argument leveraging contradiction will show that when the perturbation goes to $0$, the solution must diverge.  More specifically,  suppose that \eqref{newton-eq} does not have a solution and for any $t = 1,2,...$, $[D_{k,t}]_{I_k^- \times I_k^-} p_{k,t} = [\nabla F(y_k,z_k)]_{I_k^-}$.   Let $[D_{k,t}]_{I_k^- \times I_k^-} {\longrightarrow} [\nabla^2 F(y_k,z_k)]_{I_k^- \times I_k^-}$ as $t \to \infty$. Then we must have $\| p_{k,t} \| \to \infty$.  Otherwise there exists a cluster point of $\{ p_{k,t} \}_t$ denoted as $p_k^*$ which is the solution of \eqref{newton-eq} by continuity. This is a contradiction.

From the above discussion, it is clear that the two-metric projection method has an intrinsic flaw when it is applied to solve the equivalent bound-constrained problem \eqref{eq-bound}.  Such a flaw will cause numerical instability if close approximation of the Hessian matrix becomes necessary, for e.g.,  when solution of high accuracy is desired within a reasonable amount of time. Such a limitation motivates us to consider a more robust and stable alternative.

\section{Two-Metric Adaptive Projection Method}\label{sec: tmap}
In this section,  we propose a novel two-metric adaptive projection method that solves problem~\eqref{l1min} directly, without resorting to the intermediate bound-constrained reformulation~\eqref{eq-bound}, thereby circumventing the issue discussed in Section~\ref{sec: prelim}. 

The outline of our algorithm is as follows:
\begin{align}\label{alg: 2map0}
x_{k+1} \triangleq P_k(x_k - t_k p_k  ).
\end{align}
In \eqref{alg: 2map0}, $P_k$ is an adaptive operator since its definition depends on the index partition at iteration $k$. 
$p_k \in \bR^n$ is the descent direction which can be constructed via the gradient and the Hessian information. $t_k$ is the stepsize to be determined by a backtracking linesearch scheme with a new rule.  Next we will explain how to divide the index set and calculate each element of \eqref{alg: 2map0}.

\begin{itemize}
\fontsize{11}{15}\selectfont
\item[(i).] {\bf Partition of $\{1,...,n\}$}.  We design a new index partition strategy to directly solve \eqref{l1min}.  Given $x_k \in \mathbb{R}^n$, accuracy level $\epsilon > 0$, denote
\begin{equation*}
    \pi _k \triangleq \| x_k - \operatorname{prox}_{h} (x_k - \nabla f(x_k)) \|\text{ and } \epsilon_k \triangleq \min\{ \epsilon, \pi_k \}. 
\end{equation*}
Then we have
\begin{align}
    I_k^{-+} & \triangleq \left\{ i: \begin{array}{l}
         x_k^i > \epsilon_k \mbox{ or} \\
         0 \le x_k^i \le \epsilon_k, \nabla_i f(x_k) \le - \gamma
    \end{array} \right\}, \label{index set: I_k-+} \\
    I_k^{--} & \triangleq \left\{ i: \begin{array}{l}
         x_k^i < -\epsilon_k \mbox{ or} \\
         -\epsilon_k \le x_k^i \le 0, \nabla_i f(x_k) \ge \gamma
    \end{array} \right\},\label{index set: I_k--} \\
    \label{def: ind.2map.-}
    I_k^- & \triangleq   I_k^{-+} \cup I_k^{--} \\
    \label{def: ind.2map.+}
    I_k^+ & \triangleq \left\{ i : \begin{array}{l} | x_k^i | \le \epsilon_k, | \nabla_i f(x_k) | < \gamma \;\mbox{or}\\
    -\epsilon_k \le x_k^i < 0, \nabla_i f(x_k) \le - \gamma \; \mbox{or} \\
    0 < x_k^i \le \epsilon_k, \nabla_i f(x_k) \ge \gamma
    \end{array} \right\} = (I_k^-)^c.
\end{align}
If we compare these to the definitions of $I_{k,y}^-$ and $I_{k,z}^-$ in \eqref{def: ind.2m.1}, \eqref{def: ind.2m.2}, and let $y_k=\max (0, x_k) $, $ z_k=-\min (0, x_k) $ and $\epsilon_k$ be the same, then we have
\begin{equation*}
    I_k^{-+} \subseteq I_{k,y}^-, \quad I_k^{--} \subseteq I_{k,z}^-.
\end{equation*}
It is worth noting that we always have $I_k^{-+} \cap I_k^{--} = \emptyset $ in our method.
\item[(ii).] {\bf Calculation of $p_k$}. Given the index sets $I_k^+$ and $I_k^-$ defined in \eqref{def: ind.2map.-} and \eqref{def: ind.2map.+}, we let
\begin{align}\label{def: pk}
p_k \triangleq D_k( \nabla f(x_k) + \omega_k + r_k).
\end{align}
In \eqref{def: pk},  $D_k$ is positive definite. Inspired by the two-metric projection method, we scale the gradient component in $I_k^+$ using a diagonal matrix and the component in $I_k^-$ using a possibly non-diagonal one. More specifically,  let
\begin{align}\label{def: Dk}
\begin{aligned}
& [D_k]_{I_k^- \times I_k^-} \triangleq (H_k + \mu_k I)^{-1}, \; [D_k]_{I_k^+ \times I_k^+} \triangleq I,  \; [D_k]_{I_k^+ \times I_k^-}  \triangleq 0, \; [D_k]_{I_k^- \times I_k^+}  \triangleq 0, \\
& H_k \succeq 0, \; \| H_k \| \le \lambda_H,  \; \forall k,
\end{aligned}
\end{align}
and $\mu_k$ is a positive regularization parameter defined as
\begin{equation}\label{def: muk}
    \mu_k \triangleq c \left\| \begin{bmatrix}
        [x_k - \operatorname{prox}_h(x_k - \nabla f(x_k)]_{I_k^+} \\
        [\nabla f(x_k) +\omega_k]_{I_k^-}
       \end{bmatrix} \right\|^\delta,  \; c > 0, \; \delta \in (0,1).
\end{equation}
We implicitly assume that $\mu_k > 0$ and this is reasonable. Indeed, if $\mu_k = 0$, then $\| x_k - {\rm prox}_h(x_k - \nabla f(x_k)) \| = 0$ and $x_k$ is a stationary point. In practice,  the algorithm would have stopped.  In the original two-metric projection method,  it is assumed that the sequence of $D_k$ has a uniform upper bound on their operator norms.  This assumption prohibits using the inverse of a close approximation of a singular Hessian to build $D_k$, which is relevant in a host of applications featuring singular Hessians.  In contrast, our requirements on $D_k$ \eqref{def: Dk} allow $\| D_k \|$ to blow up, for e.g., when $H_k$'s are singular and $\mu_k \to 0$.  This addresses the singularity issue during the implementation and potentially enlarges the problem class which our methods can efficiently solve to high accuracy. 

$\omega_k$ in \eqref{def: pk} can be seen as an approximation of the subgradient of $\partial h(x_k)$ and defined as follows: for any $i$,
\begin{equation}\label{def: wk}
    \omega^i_{k} \triangleq
    \left\{\begin{array}{ll}
    \gamma, & i \in I_k^{-+}\\
   -\gamma, & i \in I_k^{--}\\
        0, & i \in I_k^+
    \end{array}\right. .
\end{equation}
In fact,  $ [\omega_{k}]_{I_k^-}$ belongs to the subdifferential of $\| [x_k]_{I_k^-} \|_1$, and if $\epsilon_k = 0$, then $\omega_k \in \partial \| x_k \|$.  

Last, $r_k$ in \eqref{def: pk} is the error when computing $p_k$ by potentially solving a linear system and satisfies the following:
\begin{equation}
    \label{residual}
\| [r_k]_{I_k^-} \| \leq \tau \min \left\{\mu_k \|[p_{k}]_{I_k^-}\|, \| [\nabla f(x_k) +\omega_{k}]_{I_k^-} \| \right\}, \;  [r_k]_{I_k^+}  \triangleq 0.
\end{equation}
Note that \eqref{residual} is checkable. Calculation of the $I_k^+$ part is relatively easy so it is exact.
\item[(iii).] {\bf Adaptive projection}.  The adaptive projection operator $P_k$ in \eqref{alg: 2map0} is defined as:
\begin{equation}
    \label{def: Pk}
    (P_{k}(v))^i =
    \left\{\begin{array}{ll}
            \max\{ v^i, 0\}, & i \in I_k^{-+}\\
       \min\{v^i,0\}, & i \in I_k^{--}\\
    {\rm prox}_{t_k \gamma | \cdot | }(v^i), & i \in I_k^+
    \end{array}\right..
\end{equation}
In a nutshell,  {\it the two-metric adaptive projection method combines two types of updates: on the $I_k^-$ part, it takes a gradient step scaled by a matrix and projects onto the corresponding orthant; on the $I_k^+$ part, it takes a proximal gradient step. }
\item[(iv).] {\bf Linesearch}.  At $k$th iteration, the trajectory of the trial point as a function of the stepsize $t$ can be written as:
\begin{equation}
    \label{update-x}
    x_{k}(t)= P_k(x_k - t p_k ),
\end{equation}
where $P_k$ and $p_k$ are defined in \eqref{def: pk} - \eqref{def: Pk}.

Denote $g_k \triangleq \nabla f(x_k)$.  Let  $\bar{g}_{k}$, $\bar{p}_{k}$, $\bar{\omega}_{k}$ and $\bar{r}_k$ be the subvectors of  $g_{k}$, $p_{k}$, $\omega_{k}$ and $r_k$  w.r.t index set  $I_{k}^{-}$.
Let $x_k^+$, $x_k^+(t)$, $g_k^+$ denote the subvectors of $x_k$, $x_k(t)$, $g_k$ w.r.t index set $I_k^+$, respectively.
 
The gradient map at iteration $k$ is defined as follows:
\begin{equation}\label{gradient map}
        G_{t}(x_k^+; g_k^+) \triangleq \frac{x_k^+ - {\rm prox}_{t \gamma \| \cdot \|_1 }(x_k^+ - t g_k^+) }{t} =\frac{x_k^+ - x_k^+(t)}{t},
\end{equation}
where the second equality holds due to the formula for $x_k(t)$.  Then, the stepsize $t_k$ is determined by a backtracking linesearch to ensure sufficient descent of the objective function.
\begin{equation}
    \psi \left(x_{k}\right)-\psi \left(x_{k}\left(t_k\right)\right) 
            \geq \sigma t_k (1-\tau) \mu_k \|\bar{p}_{k}\|^2 +\sigma t_k\|G_{t_k}(x_k^+; g_k^+) \|^{2}
\end{equation}
\end{itemize}

The algorithm can be formally stated as follows. In the next section, we will further show that Algorithm~\ref{2m-proj} is well-defined, convergent and enjoys fast convergence rate under certain assumptions.

\begin{algorithm}
\caption{Two-metric Adaptive Projection}
    \label{2m-proj}
\begin{algorithmic}[1]
\Require accuracy level $\epsilon > 0$, step acceptance parameter $\sigma \in(0,1)$, backtracking parameter $\beta \in(0,1)$, parameter $\tau \in(0,1)$ and initial point $x_{0}$.  
\For{$k=0,1,\ldots,$}
    \State Compute $g_{k} \triangleq \nabla f\left(x_{k}\right)$, $\pi_k \triangleq \| x_k - \operatorname{prox}_{h} (x_k - g_k) \| $ and $\epsilon_k \triangleq \min\{\epsilon,  \pi_k \}$.
    \State Construct index sets $I_k^+$, $I_k^-$,  $I_k^{-+}$, $I_k^{--}$ using \eqref{index set: I_k-+} to \eqref{def: ind.2map.+}.
    \State Compute step $p_{k}$ and scalar $\mu_k$ via \eqref{def: pk} to \eqref{residual}.
    \State Define $x_{k}(t)$ via \eqref{def: Pk} and \eqref{update-x}. Define $G_t(x_k^+; g_k^+)$ as in \eqref{gradient map}. 
    \State Let $t_k = 1$
    \If{$\psi \left(x_{k}\right)-\psi \left(x_{k}\left(t_k\right)\right) 
            < \sigma t_k (1-\tau) \mu_k \|\bar{p}_{k}\|^2 +\sigma t_k\| G_t(x_k^+; g_k^+) \|^{2}$}
    \State $t_k = \beta t_k$
    \Else
    \State break
    \EndIf
    \State $x_{k+1} = x_{k}\left(t_k \right)$
\EndFor
\end{algorithmic}
\end{algorithm}

\section{Convergence property}\label{sec: conv}
 In this section, we discuss the theoretical guarantees/advantages of Algorithm~\ref{2m-proj}. These results demonstrate its
 correctness and provide perspectives of why it works well in practice. The following assumption will be used throughout this section.
\begin{assumption}\label{As: basic}
Suppose that the following holds for problem \eqref{l1min} and Algorithm~\ref{2m-proj}:
    \begin{enumerate} \fontsize{11}{15}\selectfont
        \item $\psi(x)$ has bounded sublevel sets. In particular,  the sublevel set $\mathcal{L}_{\psi}\left(x_{0}\right) \triangleq \{ x \mid \psi(x) \le \psi(x_0) \}$ is bounded.
        \item $f$ is $L_g$-Lipschitz smooth over a convex open neighbourhood containing the sublevel set $\mathcal{L}_{\psi}\left(x_{0}\right)$ and trial points of Algorithm~\ref{2m-proj}.
    \end{enumerate}
\end{assumption}

\begin{remark}
(i).
    $\psi = f+h$ is continuous so every sublevel set is closed thus compact under Assumption~\ref{As: basic}. 
    Therefore,  the optimal solution set $X^*$ is nonempty and compact.  $\psi$ is bounded below by some constant $\underline{\psi}$.  (ii).  Assumption~\ref{As: basic} holds for a host of applications including $\ell_1$-norm regularized logistic regression and LASSO.  Specifically,  if $f$ is lower bounded by $\underline{f}$, then $\{ x \mid \psi(x) \le \alpha \} \subseteq \{ x \mid \| x \|_1 \le (\alpha - \underline{f})/\gamma \} $. The latter is bounded for any $\alpha$.  Assumption~\ref{As: basic}(2) holds if $f$ is Lipschitz smooth. 
\end{remark}

 First of all, we need to show that the algorithm is well-defined. Specifically, we need to show that the backtracking linesearch (inner loop of Algorithm~\ref{2m-proj}) can stop.  We begin with the following two lemmas which hold under Assumption~\ref{As: basic}.

 \begin{lemma}
    \label{lm: tbd+}
  Consider the $k$th iteration of Algorithm~\ref{2m-proj}.  If $t \leq \frac{ 2(1-\sigma)}{L_g}$, we have
    \begin{equation*}
        \psi(x_k(t)) - \psi(x_k) \leq \sum_{i \in I_k^-} \left( (g_k^i+\omega_{k}^i)(x_k^i(t)-x_k^i)+\frac{L_g}{2}(x_k^i(t)-x_k^i)^2 \right)-\sigma  t \|G_t (x_k^+;g_k^+)\|_{2}^{2}.
    \end{equation*}
\end{lemma}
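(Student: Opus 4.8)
The plan is to start from the descent lemma for $f$ afforded by the $L_g$-Lipschitz gradient, namely
\[
f(x_k(t)) - f(x_k) \le \langle g_k, x_k(t)-x_k\rangle + \tfrac{L_g}{2}\|x_k(t)-x_k\|^2,
\]
and then to split both this bound and the nonsmooth increment $h(x_k(t))-h(x_k) = \gamma(\|x_k(t)\|_1 - \|x_k\|_1)$ coordinatewise along the partition $\{1,\dots,n\} = I_k^+\cup I_k^{-+}\cup I_k^{--}$. The blocks $I_k^-$ and $I_k^+$ are to be treated by entirely different mechanisms: on $I_k^-$ the $\ell_1$ term linearizes exactly, whereas on $I_k^+$ the update is a genuine proximal-gradient step whose decrease is captured by the gradient map.

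First I would dispose of $I_k^-$. The key is that $\mathcal{P}_{k,\epsilon}$ preserves signs there. For $i \in I_k^{-+}$ the defining conditions force $x_k^i \ge 0$, and $\mathcal{P}_{k,\epsilon}^i = \max\{\cdot,0\}$ gives $x_k^i(t)\ge 0$, so $|x_k^i(t)| - |x_k^i| = x_k^i(t)-x_k^i$; symmetrically, $i \in I_k^{--}$ keeps both quantities $\le 0$. Since $\omega_{k,\epsilon}^i = \gamma$ on $I_k^{-+}$ and $-\gamma$ on $I_k^{--}$, in either case $\gamma(|x_k^i(t)|-|x_k^i|) = \omega_{k,\epsilon}^i(x_k^i(t)-x_k^i)$. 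Adding this to the $I_k^-$ part of the descent lemma reproduces exactly the summand $(g_k^i+\omega_{k,\epsilon}^i)(x_k^i(t)-x_k^i)+\tfrac{L_g}{2}(x_k^i(t)-x_k^i)^2$ appearing in the claim.

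Next I would handle $I_k^+$. Because $\omega_{k,\epsilon}=0$ there, we have $[p_k]_{I_k^+}=g_k^+$, hence $x_k^+(t)=\operatorname{prox}_{t h_k^+}(x_k^+ - t g_k^+)$ and $x_k^+(t)-x_k^+ = -t\,G_t^k(x_k^+)$ by the definition of the gradient map in \cref{gradient map}. The optimality condition of the prox yields $-g_k^+ + G_t^k(x_k^+)\in \partial h_k^+(x_k^+(t))$, so convexity of $h_k^+$ gives $h_k^+(x_k^+(t))-h_k^+(x_k^+)\le t\langle g_k^+,G_t^k(x_k^+)\rangle - t\|G_t^k(x_k^+)\|^2$. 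Combining this with the $I_k^+$ part of the descent lemma, in which $\langle g_k^+, x_k^+(t)-x_k^+\rangle + \tfrac{L_g}{2}\|x_k^+(t)-x_k^+\|^2 = -t\langle g_k^+,G_t^k(x_k^+)\rangle + \tfrac{L_g t^2}{2}\|G_t^k(x_k^+)\|^2$, the cross terms cancel and the $I_k^+$ contribution collapses to $-t\bigl(1-\tfrac{L_g t}{2}\bigr)\|G_t^k(x_k^+)\|^2$.

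Finally, the stepsize restriction $t\le 2(1-\sigma)/L_g$ is precisely what ensures $1-\tfrac{L_g t}{2}\ge\sigma$, so that the $I_k^+$ contribution is bounded above by $-\sigma t\|G_t^k(x_k^+)\|^2$; adding the two blocks gives the stated inequality. I expect the only delicate point to be the bookkeeping that justifies sign preservation on $I_k^-$ directly from the definitions of $I_k^{-+}$ and $I_k^{--}$ — everything else is the standard proximal-gradient sufficient-decrease estimate combined with the descent lemma. One should also retain the convention (footnoted in the algorithm) that when $[g_k+\omega_{k,\epsilon}]_{I_k^-}=0$ the $I_k^-$ block is trivial, so no difficulty arises from the linear system.
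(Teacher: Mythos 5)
Your proposal is correct and follows essentially the same route as the paper's proof: the descent lemma for $f$, sign preservation of $\mathcal{P}_{k,\epsilon}$ on $I_k^{-}$ to linearize the $\ell_1$ term there, and the prox optimality condition plus convexity of $h_k^+$ on $I_k^+$ to produce the $-\sigma t\|G_t^k(x_k^+)\|^2$ term. The only cosmetic difference is that the paper assembles a single subgradient of $h$ at $x_k(t)$ and invokes convexity of $h$ once, whereas you handle the $I_k^-$ block by the exact sign-preservation identity; both yield the same bound.
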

\begin{proofs}
    According to (\ref{gradient map}), we have
    \begin{align*}
        x_k^+(t) = S_{t \gamma}(x_k^+ - t g_k^+) = x_k^+ -t G_{t}(x_k^+;g_k^+).
    \end{align*}
    Then by the definition of proximal operator, we have
    \begin{align}
    \notag
       & 0 \in x_k^+(t) - (x_k^+ - t g_k^+) + t \partial \gamma \|x_k^+(t)\|_{1} \\
       \label{inclus0}
       \implies & G_t(x_k^+;g_k^+) - g_k^+ \in \partial \gamma \|x_k^+(t)\|_{1}.
    \end{align}
    Without loss of generality, we assume that
    \begin{align*}
            I_k^+ = \left\{1,2,\ldots,n_1\right\},\;
            I_k^{-+} = \left\{n_1+1,\ldots,n_2\right\},\;
            I_k^{--} = \left\{n_2+1,\ldots,n\right\}.
    \end{align*} 
    Therefore, by $h(x_k(t)) = \gamma \|x_k^+(t)\|_{1} + \gamma \sum_{i \in I_k^{-+}} | x_k^i(t) | + \gamma \sum_{i \in I_k^{--}} | x_k^i(t) |$ and the fact that
    for any $i \in I_k^{-+}$, $x^i_k(t) \geq 0$ and for any $i \in I_k^{--}$, $x^i_k(t) \leq 0$, we have
    \begin{equation*}
       \left(
            ( G_t(x_k^+;g_k^+)-g_k^+)^T ,  
          \gamma,
            \hdots ,
            \gamma,
          -\gamma,
            \hdots,
            -\gamma 
          \right)^T \in \partial h(x_k(t)).
    \end{equation*}
    Since $h(x)$ is convex, then
    \begin{align}\label{ineq: hc}
    \begin{aligned}
&        h(x_k)  \geq h(x_k(t))  \\
   &     + \left( G_t(x_k^+;g_k^+)-g_k^+ \right)^{T}t G_t(x_k^+;g_k^+)+\gamma \sum_{i \in I_k^{-+}}(x_k^i -x_k^i(t))-\gamma \sum_{i \in I_k^{--}}(x_k^i -x_k^i(t)).
   \end{aligned}
    \end{align}
    Therefore, by Lipschitz continuity of $f$ and (\ref{ineq: hc}), we have
    \begin{align*}
    \psi(x_k(t)) & = f(x_k(t)) + h(x_k(t)) \\
    & \le f(x_k) + h(x_k) + g_k^T (x_k(t) - x_k) + \frac{L_g}{2} \| x_k(t) - x_k \|^2 \\
    & + \left( g_k^+ - G_t(x_k^+;g_k^+) \right)^{T}t G_t(x_k^+;g_k^+)  - \gamma \sum_{i \in I_k^{-+}}(x_k^i -x_k^i(t)) + \gamma \sum_{i \in I_k^{--}}(x_k^i -x_k^i(t)) \\
    & \leq \psi(x_k) - t ( G_{t}(x_k^+;g_k^+) )^T G_{t}(x_k^+;g_k^+) + \frac{L_g t^2}{2}\|G_t(x_k^+;g_k^+)\|_{2}^{2} \\
    & + \sum_{i \in I_k^-} \left( (g_k^i+\omega_{k}^i)(x_k^i(t)-x_k^i)+\frac{L_g}{2}(x_k^i(t)-x_k^i)^2 \right) \\
    & = \psi(x_k) + \left( \frac{L_gt^2}{2} - t \right) \|G_t (x_k^+;g_k^+)\|_{2}^{2} \\
    & + \sum_{i \in I_k^-} \left( (g_k^i+\omega_{k}^i)(x_k^i(t)-x_k^i)+\frac{L_g}{2}(x_k^i(t)-x_k^i)^2 \right).
    \end{align*}
    If $t \leq \frac{2(1-\sigma)}{L_g}$, then 
    \begin{equation*}
        \psi(x_k(t)) - \psi(x_k) \leq \sum_{i \in I_k^-} \left( (g_k^i+\omega_{k}^i)(x_k^i(t)-x_k^i)+\frac{L_g}{2}(x_k^i(t)-x_k^i)^2 \right)-\sigma  t \|G_t (x_k^+;g_k^+)\|_{2}^{2}.
    \end{equation*}
This concludes the proof.
\end{proofs}

We assume that ${g_k}$ is bounded without loss of generality and will demonstrate this in Remark~\ref{rmk: bdg} after Theorem~\ref{thm: suffdescent}.  
Note that by \eqref{def: pk} - \eqref{residual},
    \begin{align*}
        \|\bar{p}_k\| & = \|(H_k + \mu_k I)^{-1}(\bar{g}_k+\bar{\omega}_{k} + \bar{r}_k)\| \le \mu_k^{-1} (\| \bar{g}_k + \bar{\omega}_{k} \| + \| \bar{r}_k \| ) \\
        & \leq c^{-1} \|\bar{g}_k+\bar{\omega}_{k}\|^{1-\delta} + \tau \|\bar{p}_k\|,
    \end{align*}
    which indicates that
    \begin{equation}\label{ineq: pkbd}
        \|\bar{p}_k\| \leq \frac{c^{-1}\|\bar{g}_k+\bar{\omega}_{k}\|^{1-\delta}}{1-\tau}.
    \end{equation}
Therefore, we let $G$ be a constant such that for any $k$,
\begin{align}\label{ineq: pkbd2}
    \|\bar{p}_k\| \le G.
\end{align}

\begin{lemma}\label{lm: tbd-}
    Consider the $k$th iteration of Algorithm~\ref{2m-proj}.  If $t$ satisfies $$t \leq \min\left(\frac{ 2(1-\sigma)(1-\tau) \mu_k}{L_g}, \frac{\epsilon_k}{G} \right) $$ then we have:
    \begin{equation*}
    \sum_{i \in I_k^-} \left( (g_k^i+\omega_{k}^i)(x_k^i(t)-x_k^i)+\frac{L_g}{2}(x_k^i(t)-x_k^i)^2 \right) \leq -\sigma t (1-\tau) \mu_k \|\bar{p}_{k}\|^2.
    \end{equation*}
\end{lemma}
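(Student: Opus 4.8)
The plan is to compare, coordinate by coordinate, the contribution of each $i \in I_k^-$ to the left-hand side against the value it would take if the adaptive projection were inactive, i.e. if $x_k^i(t) = x_k^i - t p_k^i$. Writing $d_i \triangleq x_k^i(t) - x_k^i$ and
\[
q_i(t) \triangleq (g_k^i + \omega_{k,\epsilon}^i)\, d_i + \tfrac{L_g}{2} d_i^2,
\]
I will first establish the pointwise bound $q_i(t) \le -(g_k^i + \omega_{k,\epsilon}^i)\, t p_k^i + \tfrac{L_g}{2} t^2 (p_k^i)^2$ for every $i \in I_k^-$, and then sum, which turns the right-hand side into $-t(\bar g_k + \bar\omega_{k,\epsilon})^T \bar p_k + \tfrac{L_g t^2}{2}\|\bar p_k\|^2$. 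The remaining work is to lower-bound the inner product via \cref{linear-system} and the residual bound \cref{residual}, and then to absorb the quadratic term using the first restriction on $t$.

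The heart of the argument is the pointwise bound, where both halves of the hypothesis on $t$ and the sign structure encoded in the index sets enter. Consider $i \in I_k^{-+}$, so $x_k^i \ge 0$, $\omega_{k,\epsilon}^i = \gamma$, and $x_k^i(t) = \max\{x_k^i - t p_k^i,\,0\} \ge 0$. If $x_k^i - t p_k^i \ge 0$ the projection is inactive and $q_i(t)$ equals the claimed right-hand side exactly. Otherwise $x_k^i(t)=0$, forcing $t p_k^i > x_k^i \ge 0$, hence $p_k^i > 0$ and $d_i = -x_k^i$ with $0 \le x_k^i < t p_k^i$. In the first defining branch of $I_k^{-+}$ we have $x_k^i > \epsilon_k$, and $t \le \epsilon_k/G$ together with $|p_k^i| \le \|\bar p_k\| \le G$ gives $x_k^i - t p_k^i > \epsilon_k - tG \ge 0$, so only the inactive case can occur there. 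It therefore remains to treat the near-zero branch, where $g_k^i \le -\gamma$, hence $g_k^i + \omega_{k,\epsilon}^i = g_k^i + \gamma \le 0$; the difference between the claimed right-hand side and $q_i(t)$ then splits as $(g_k^i+\gamma)(x_k^i - t p_k^i) + \tfrac{L_g}{2}\big(t^2(p_k^i)^2 - (x_k^i)^2\big)$, whose linear part is $\ge 0$ (product of two nonpositive factors) and whose quadratic part is $\ge 0$ (since $0 \le x_k^i < t p_k^i$). The case $i \in I_k^{--}$ is entirely symmetric: $x_k^i \le 0$, $\omega_{k,\epsilon}^i = -\gamma$, the branch $x_k^i < -\epsilon_k$ is again ruled out by $t \le \epsilon_k/G$, and in the near-zero branch $g_k^i - \gamma \ge 0$ makes the clipped step advantageous in exactly the same manner.

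With the pointwise bound summed over $I_k^-$, I would substitute $\bar g_k + \bar\omega_{k,\epsilon} = (H_k + \mu_k I)\bar p_k - r_k$ from \cref{linear-system} to get
\[
(\bar g_k + \bar\omega_{k,\epsilon})^T \bar p_k = \bar p_k^T (H_k + \mu_k I)\bar p_k - r_k^T \bar p_k \ge \mu_k \|\bar p_k\|^2 - \|r_k\|\,\|\bar p_k\| \ge (1-\tau)\mu_k \|\bar p_k\|^2,
\]
using $H_k \succeq 0$ and $\|r_k\| \le \tau \mu_k \|\bar p_k\|$ from \cref{residual}. This yields
\[
\sum_{i \in I_k^-} q_i(t) \le -t(1-\tau)\mu_k \|\bar p_k\|^2 + \tfrac{L_g t^2}{2}\|\bar p_k\|^2,
\]
and the asserted inequality follows once $\tfrac{L_g t}{2} \le (1-\sigma)(1-\tau)\mu_k$, i.e. once $t \le \tfrac{2(1-\sigma)(1-\tau)\mu_k}{L_g}$, which is the remaining hypothesis (the case $\|\bar p_k\| = 0$ being trivial).

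I expect the pointwise case analysis to be the main obstacle — specifically, verifying that the projection's clipping only decreases $q_i(t)$ in the near-zero branches of $I_k^{-+}$ and $I_k^{--}$ — since this is where the precise definitions of the index sets and the two distinct roles of the step-size bound (the first controlling the quadratic overshoot, the $\epsilon_k/G$ bound keeping the far-from-zero coordinates unclipped) must be combined; the algebraic consolidation afterward is routine.
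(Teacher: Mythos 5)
Your proposal is correct and follows essentially the same route as the paper's proof: a coordinate-wise case analysis showing that the clipped step is no worse than the unclipped one (using $t \le \epsilon_k/G$ to rule out clipping on the far-from-zero coordinates and the sign conditions $g_k^i \le -\gamma$, resp.\ $g_k^i \ge \gamma$, on the near-zero ones), followed by the lower bound $(\bar g_k + \bar\omega_{k,\epsilon})^T\bar p_k \ge (1-\tau)\mu_k\|\bar p_k\|^2$ from the linear system and the residual condition, and absorption of the quadratic term via $t \le 2(1-\sigma)(1-\tau)\mu_k/L_g$. The only cosmetic difference is that you verify the pointwise bound by showing the difference of the two sides is nonnegative, while the paper bounds the linear and quadratic contributions separately; these are the same argument.
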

\begin{proofs}
    If $x_k$ is critical, the result is immediate. Now suppose that $x_k$ is not critical. Then $\epsilon_k > 0 $.\\
    Case 1: For $x_k^i > \epsilon_k$ or $x_k^i < -\epsilon_k$, we have 
    \begin{equation*}
        x_k^i(t) = x_k^i- t p_k^i, \quad \text{if } t \leq \frac{\epsilon_k}{G}.
    \end{equation*}
    Therefore, 
    \begin{equation*}
        (g_k^i+\omega_{k}^i)(x_k^i(t)-x_k^i)+\frac{L_g}{2}(x_k^i(t)-x_k^i)^2 = \frac{L_g}{2}(t p_k^i)^2 -(g_k^i+\omega_{k}^i)(t p_k^i).
    \end{equation*}
    Case 2: For $0 \le x_k^i \le \epsilon_k, g_k^i \le - \gamma$,  we have $i \in I_k^{-+}$, $x_k^i(t) \triangleq \max\{ x_k^i - tp_k^i,  0 \}$, and $\omega_k^i = \gamma$.  Therefore,
    \begin{equation*}
        x_k^i(t)\geq x_k^i- t p_k^i \Longrightarrow (g_k^i+\omega_{k}^i)(x_k^i(t)-x_k^i) \leq -t p_k^i(g_k^i+\omega_{k}^i).
    \end{equation*}
    Since $| x_k^i(t) - x_k^i| \leq |t p_k^i|$, we also have
    \begin{equation*}
        \frac{L_g}{2}(x_k^i(t)-x_k^i)^2 \leq \frac{L_g}{2}(t p_k^i)^2.
    \end{equation*}
    Therefore, in this case
    \begin{equation*}
        (g_k^i+\omega_{k}^i)(x_k^i(t)-x_k^i)+\frac{L_g}{2}(x_k^i(t)-x_k^i)^2 \leq \frac{L_g}{2}(t p_k^i)^2 -(g_k^i+\omega_{k}^i)(t p_k^i).
    \end{equation*}
    Case 3: For $-\epsilon_k \le x_k^i \le 0, g_k^i \ge \gamma$, similar to Case 2 we have
    \begin{equation*}
        (g_k^i+\omega_{k}^i)(x_k^i(t)-x_k^i)+\frac{L_g}{2}(x_k^i(t)-x_k^i)^2 \le \frac{L_g}{2}(t p_k^i)^2 -(g_k^i+\omega_{k}^i)(t p_k^i).
    \end{equation*}
    To conclude, for $i \in I_k^-$ we have,
    \begin{equation*}
        (g_k^i+\omega_{k}^i)(x_k^i(t)-x_k^i)+\frac{L_g}{2}(x_k^i(t)-x_k^i)^2 \le \frac{L_g}{2}(t p_k^i)^2 -(g_k^i+\omega_{k}^i)(t p_k^i),
    \end{equation*}
    i.e.,
    \begin{align*}
        \sum_{i \in I_k^-} \left( (g_k^i+\omega_{k}^i)(x_k^i(t)-x_k^i)+\frac{L_g}{2}(x_k^i(t)-x_k^i)^2 \right) \le \frac{L_g}{2}t^2 \| \bar p_k \|^2 - t(\bar g_k + \bar \omega_{k})^T \bar p_k.
    \end{align*}
    We consider $(\bar{g}_k+\bar{\omega}_{k})^T \bar{p}_{k}$,
    \begin{equation*}
        \begin{aligned}
            (\bar{g}_k+\bar{\omega}_{k})^T \bar{p}_{k} & =  \bar{p}_{k}^T (H_k + \mu_k I) \bar{p}_{k} - \bar{p}_{k}^T \bar{r}_k \geq \mu_k \|\bar{p}_{k}\|^2 - \|\bar{p}_{k}\|\| \bar{r}_{k}\| \\
            & \geq \mu_k \|\bar{p}_{k}\|^2 - \tau \mu_k \|\bar{p}_{k}\|^2 = (1-\tau)\mu_k \|\bar{p}_{k}\|^2.
        \end{aligned}
    \end{equation*}
    Therefore,
    \begin{equation*}
        \sum_{i \in I_k^-} \left( (g_k^i+\omega_{k}^i)(x_k^i(t)-x_k^i)+\frac{L_g}{2}(x_k^i(t)-x_k^i)^2 \right)  \leq \frac{L_g}{2}t^2\|\bar{p}_{k}\|^2 -t(1-\tau)\mu_k \|\bar{p}_{k}\|^2.
    \end{equation*}
    When $t \leq \frac{2(1-\sigma)(1-\tau) \mu_k}{L_g}$,
    \begin{equation*}
        \frac{L_g}{2}t^2\|\bar{p}_{k}\|^2 -t(1-\tau)\mu_k \|\bar{p}_{k}\|^2 = t\left(\frac{L_g}{2}t - (1-\tau)\mu_k \right)\|\bar{p}_{k}\|^2 \leq -\sigma t (1-\tau)\mu_k\|\bar{p}_{k}\|^2.
    \end{equation*}
    This concludes the proof.
\end{proofs}

These two lemmas immediately indicate the following theorem.  The result of Theorem~\ref{thm: suffdescent} guarantees that the linesearch in the inner loop of Algorithm~\ref{2m-proj} is well-defined.
 \begin{theorem}
    \label{thm: suffdescent}
    Consider the $k$th iteration of Algorithm~\ref{2m-proj}.  Suppose that Assumption~\ref{As: basic} holds. Then if $$t \leq \min \left(\frac{ 2(1-\sigma)}{L_g}, \frac{ 2(1-\sigma)(1-\tau) \mu_k}{L_g}, \frac{\epsilon_k}{G}\right), $$ we have
    \begin{equation}
        \label{armijo}
        \psi \left(x_{k}\right)-\psi \left(x_{k}\left(t\right)\right) 
            \geq \sigma t (1-\tau) \mu_k \|\bar{p}_{k}\|^2 +\sigma t\|G_t(x_k^+;g_k^+)\|^{2}.
    \end{equation}
    Therefore, $t_k > \beta \min \left\{\frac{ 2(1-\sigma)}{L_g}, \frac{ 2(1-\sigma)(1-\tau) \mu_k}{L_g}, \frac{\epsilon_k}{G}, 1\right\} $.
\end{theorem}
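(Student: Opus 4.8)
The plan is to recognize that \cref{thm: suffdescent} is essentially the synthesis of the two preceding lemmas, followed by a standard termination argument for the backtracking loop. First I would observe that the hypothesis $t \le \min\bigl(\tfrac{2(1-\sigma)}{L_g}, \tfrac{2(1-\sigma)(1-\tau)\mu_k}{L_g}, \tfrac{\epsilon_k}{G}\bigr)$ is designed precisely so that $t$ simultaneously satisfies the bound $t \le \tfrac{2(1-\sigma)}{L_g}$ required by \cref{lm: tbd+} and the bound $t \le \min\bigl(\tfrac{2(1-\sigma)(1-\tau)\mu_k}{L_g}, \tfrac{\epsilon_k}{G}\bigr)$ required by \cref{lm: tbd-}. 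Hence both lemmas apply at this $t$.

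Next I would chain the two estimates. Writing $S$ for the summation $\sum_{i\in I_k^-}\bigl((g_k^i+\omega_{k,\epsilon}^i)(x_k^i(t)-x_k^i)+\tfrac{L_g}{2}(x_k^i(t)-x_k^i)^2\bigr)$ that appears in both lemmas, \cref{lm: tbd+} gives $\psi(x_k(t)) - \psi(x_k) \le S - \sigma t\|G_t^k(x_k^+)\|^2$, while \cref{lm: tbd-} gives $S \le -\sigma t(1-\tau)\mu_k\|\bar{p}_k\|^2$. Substituting the second bound into the first and multiplying through by $-1$ yields exactly \cref{armijo}.

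Finally I would establish the lower bound on the accepted stepsize $t_k$ from the backtracking scheme, which generates trial values $1, \beta, \beta^2, \ldots$ and accepts the first one meeting the Armijo test \cref{armijo}. The inequality just proved shows that \cref{armijo} is guaranteed whenever $t \le \min\bigl(\tfrac{2(1-\sigma)}{L_g}, \tfrac{2(1-\sigma)(1-\tau)\mu_k}{L_g}, \tfrac{\epsilon_k}{G}\bigr)$, so the loop cannot drop below this threshold without terminating. Consequently either $t_k = 1$ is accepted outright, or the previously rejected trial $t_k/\beta$ must have exceeded the threshold (the contrapositive of \cref{armijo} being available); in either case $t_k > \beta\min\bigl(\tfrac{2(1-\sigma)}{L_g}, \tfrac{2(1-\sigma)(1-\tau)\mu_k}{L_g}, \tfrac{\epsilon_k}{G}, 1\bigr)$, where the extra entry $1$ absorbs the initial-step case.

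I expect almost no obstacle here, since the analytical difficulty is already absorbed by \cref{lm: tbd+,lm: tbd-}. The only points demanding care are bookkeeping ones: confirming that the single hypothesis on $t$ feeds both lemmas correctly, and handling the initial trial $t=1$ in the termination argument so that the factor $\min\{\cdots,1\}$, rather than just the three analytic thresholds, appears in the final lower bound on $t_k$.
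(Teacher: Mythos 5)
Your proposal is correct and follows exactly the route the paper takes: the paper states that \cref{lm: tbd+} and \cref{lm: tbd-} ``immediately indicate'' \cref{thm: suffdescent}, i.e., one substitutes the bound on the $I_k^-$ sum from \cref{lm: tbd-} into the estimate of \cref{lm: tbd+} and then applies the standard backtracking argument (either $t_k=1$ is accepted, or the rejected trial $t_k/\beta$ exceeded the analytic threshold) to obtain the stated lower bound on $t_k$. Your bookkeeping of the three threshold terms and the extra entry $1$ is exactly what the paper intends.
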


\begin{remark}\label{rmk: bdg}
    $x_0$ is in the bounded sublevel set $\mathcal{L}_\psi (x_0)$, so we can naturally find $G$ such that (\ref{ineq: pkbd2}) holds for $k = 0$. Therefore Theorem~\ref{thm: suffdescent} makes sense for $k = 0$ and we have $\psi(x_1) \le \psi(x_0)$, 
    which indicates that $x_1 \in \mathcal{L}_\psi (x_0)$. Then by induction, $\{x_k\}$ lies in the sublevel set and there exists $G < +\infty$ such that (\ref{ineq: pkbd2}) holds for any $k$.
\end{remark}

 \subsection{Global convergence}
To demonstrate the correctness of Algorithm~\ref{2m-proj} to solve problem \eqref{l1min}, we show that the limit point is always stationary regardless of the initial choice $x_0$.
 \begin{theorem}[Global Convergence]
    \label{global-convergence}
  Denote $\{ x_k \}$ as the sequence generated by Algorithm~\ref{2m-proj} and suppose the sequence does not contain a stationary point ($\mu_k >0$, $\forall k$).  Under Assumption~\ref{As: basic}, any limit point of $\{ x_k \}$ is a stationary point. 
\end{theorem}

\begin{proofs}
    By Assumption~\ref{As: basic} we have lower boundedness of $\psi(x)$.  This fact and Theorem~\ref{thm: suffdescent} indicate that $\psi(x_k) - \psi(x_{k+1}) \to 0$. Therefore,
    \begin{equation}
        \label{limit-linesearch}
        \lim_{k \to \infty} t_k \mu_k \|\bar{p}_{k}\|^2=0
        \text{ and }
        \lim_{k \to \infty} t_k\|G_{t_k}(x_k^+;g_k^+)\|_{2}^{2}=0.
    \end{equation}
    Suppose that $x^*$ is a limit point but not stationary, therefore there exists a subsequence $\left\{x_{k_m}\right\}$ converging to $x^*$. 
    We first show that $\left\{t_{k_m}\right\}$ is bounded away from zero. For $i \in I_k^-$ and $t$ small enough, we have
    \begin{align}\label{ineq: beck}
        g_k^i + \omega_{k}^i = | x_k^i - \operatorname{prox}_{t \gamma | \cdot |}(x_k^i - t g_k^i) |/t \ge | x_k^i - \operatorname{prox}_{\gamma  | \cdot |}(x_k^i - g_k^i) |,
    \end{align}
    where the inequality holds because of Theorem 10.9 in \citep{beck2017first}.  Together with definition of $\mu_k$ in \eqref{def: muk}, \eqref{ineq: beck} indicates the following: for any $k$,
   \begin{align}\label{ineq: muk}
           \mu_k \geq c\|x_k - \operatorname{prox}_{\gamma  \| \cdot \|_1}(x_k - g_k)\|^\delta.
   \end{align}
Since $$\| x^* - \operatorname{prox}_{h} (x^* - \nabla f(x^*)) \| \ne 0,$$
\eqref{ineq: muk} holds and $\lim\limits_{m \to \infty} x_{k_m} = x^*$, there exist $\bar{\epsilon}, \bar{\mu}>0$ and $M$ large enough such that $\epsilon_{k_m} \ge \bar{\epsilon}$,  $\mu_{k_m} \ge \bar{\mu}$,  $\forall m \ge M$. 
    Therefore, we have that $\left\{t_{k_m}\right\}$ is bounded away from zero by Theorem~\ref{thm: suffdescent}. 
    Then, by (\ref{limit-linesearch}) we have
    \begin{equation*}
        \lim_{m \to \infty} \|\bar{p}_{k_m}\| = 0\;
    \mbox{and}\;
        \lim_{m \to \infty} \|G_{t_{k_m}}(x_{k_m}^+;g_{k_m}^+)\| = 0.
    \end{equation*}   
From \eqref{ineq: beck} and \eqref{def: pk} - \eqref{residual}, we know that
    \begin{equation*}
        \|[x_k - \operatorname{prox}_h(x_k - g_k)]_{I_k^-}\| \leq\|\bar{g}_k+\bar{\omega}_{k}\| = \| (H_k + \mu_k I)\bar{p}_k - \bar{r}_k \| \leq (\lambda_H+\mu_k+ \tau \mu_k) \|\bar{p}_k\|.
    \end{equation*}
Also, by \citep[Theorem 10.9]{beck2017first}, we have
    \begin{equation*}
        \|[x_k - \operatorname{prox}_h(x_k - g_k)]_{I_k^+}\| \leq\|G_{t_k}(x_k^+;g_k^+)\|.
    \end{equation*}
    Thus,
    \begin{equation*}
        \lim_{m \to \infty} \|x_{k_m} - \operatorname{prox}_h(x_{k_m} - g_{k_m})\| = 0,
    \end{equation*}
    which indicates that $x^*$ is a stationary point. Contradiction.
\end{proofs}

\subsection{Manifold Identification and local superlinear convergence rate}

In this subsection,  we exploit manifold identification and Newton steps for acceleration.
Under appropriate assumptions and choice of $H_k$, we show that Algorithm~\ref{2m-proj} is able to identify the active manifold of the optimal solution in finite number of steps. Such a property takes
the calculation of the Newton step to a potentially low-dimensional subspace, thus effectively accelerating convergence in iteration while maintaining scalability.  We specify the necessary assumptions as follows.

\begin{assumption}[Strict complementarity or SC]\label{asp: sc}
A critical point $x$ satisfies the strict complementarity condition if and only if $0 \in \operatorname{relint}(\nabla f(x)+ \partial h(x))$. $\operatorname{relint} (S)$ denotes the relative interior of a set $S$.
\end{assumption}

\begin{remark}
Assumption~\ref{asp: sc} is very common in literature to guarantee manifold identification for active set methods, \citep{hare2004identifying,bareilles2023newton,lee2023accelerating,hale2008fixed,nocedal1999numerical}  to name a few.
It is an if and only if condition for the partially smooth functions to be identifiable \citep{lewis2022partial,lewis2024identifiability,hare2007identifying,drusvyatskiy2014optimality}.
Discussion on applicability of this assumption is ongoing for decades.  Some positive results include \citep{drusvyatskiy2011generic},  in which Assumption~\ref{asp: sc} is proven to hold almost surely under linear perturbation at minimizers of convex objective function, and \citep{hale2008fixed}, in which Assumption~\ref{asp: sc} is empirically observed for LASSO when the data matrix is randomly generated. 
\end{remark}

\begin{assumption}[Error bound or EB]\label{asp: eb}
    Given an optimal point $x^* \in X^*$, there exist a neighborhood $B\left(x^{*}, b_{1}\right)$ and a constant $c_1 > 0$ such that for all $x \in B\left(x^{*}, b_{1}\right)$,
    \begin{equation}
        \label{eq: eb}
        c_1 \|x - \operatorname{prox}_h(x-\nabla f(x))\| \geq \operatorname{dist}\left(x, X^{*}\right).
    \end{equation}
\end{assumption}

\begin{remark}[Comparison with Luo-Tseng EB Property]
    The EB condition in Assumption~\ref{asp: eb} is similar to the Luo-Tseng EB property \citep{luo1993error, tseng2009coordinate}, which has been widely used in the convergence rate analysis of first-order methods for structured optimization problems and is currently extended for analysis of second-order methods \citep{yue2019family, xiao2018regularized}.
    Assumption~\ref{asp: eb} was also used in the analysis of semi-smooth Newton methods \citep{hu2022local}.
    Luo-Tseng EB property requires that for any $\zeta \ge \psi(x^*)$, \eqref{eq: eb} holds for all $x$ in a test set $\{x \mid \psi(x) \leq \zeta , \|x - \operatorname{prox}_h(x-\nabla f(x))\| \leq \epsilon\}$, where $\epsilon>0$ and $c_1$ depend on $\zeta$. In the convex case and under mild conditions,  Luo-Tseng EB property could be checked by verifying the calmness of the solution map which is equivalent to \eqref{eq: eb} holding in a neighborhood of $X^*$ \citep{zhou2017unified},  so Luo-Tseng EB implies Assumption~\ref{asp: eb}.  Luo-Tseng EB property holds for a wide range of structured convex optimization problems, including LASSO and logistic regression with $\ell_1$-regularization.  More details can be found in \citep{tseng2009coordinate, zhou2017unified}.
\end{remark}


\begin{assumption}[Error bound on Manifold]\label{asp: ebman}
Given an optimal point $x^* \in X^*$ and the corresponding $\sM_*$, there exist constants ${b}_1 > 0$ and ${c}_1 > 0$  such that for all $x \in \mathcal{M}_{*} \cap B\left(x^{*},  {b}_{1}\right)$,
    \begin{equation*}
        {c}_1 \|\operatorname{grad}_{\mathcal{M}_{*}} \psi(x)\| \geq \operatorname{dist}\left(x, X^{*} \cap \mathcal{M}_{*}\right).
    \end{equation*}
\end{assumption}
The relation between Assumption~\ref{asp: eb} and \ref{asp: ebman} is as follows.

\begin{theorem}
    \label{prop: eb-ebman}
When SC (Assumption~\ref{asp: sc}) holds at an optimal point $x^*$,  EB (Assumption~\ref{asp: eb}) at $x^*$ implies EB on manifold (Assumption~\ref{asp: ebman}) at $x^*$. 
\end{theorem}
\begin{proofs}
Recall that $\partial h(x^*) = \{ v \mid v^i = {\rm sgn}([x^*]^i) \gamma \mbox{ if } [x^*]^i \neq 0; v^i \in [-\gamma,\gamma] \mbox{ if } [x^*]^i = 0 \}$.  Then by SC, $\nabla_i f(x^*) \in (-\gamma, \gamma)$ if $[x^*]^i = 0$. By continuity of $\nabla f(x)$, there exists a neighborhood $B\left(x^{*}, 2b_1\right)$ (w.l.o.g we suppose $b_1$ is same as Assumption~\ref{asp: eb}) such that $ B\left(x^{*}, 2b_1\right) \cap X^* \subseteq \mathcal{M}_{*}$.
    Define $ \Pi_{X^{*}}(x) \triangleq   \operatorname{argmin}_{y \in X^{*}}\|y-x\|$, and let $\tilde{x} \triangleq \Pi_{X^{*}}(x)$. Then we have for all $x \in B\left(x^{*}, b_1\right)$,
    \begin{equation*}
        \|x - \tilde{x}\| = \operatorname{dist}\left(x, X^{*}\right) \leq \|x - x^*\| < b_1.
    \end{equation*}
    Therefore, $\|\tilde{x}-x^*\| \leq \|x - \tilde{x}\|+ \|x - x^*\| < 2b_1$ and $\tilde{x} \in \mathcal{M}_{*}$.
    Since
    \begin{equation*}
        \|x - \tilde{x}\| = \operatorname{dist}\left(x, X^{*}\right) \leq \operatorname{dist}\left(x, X^{*} \cap \mathcal{M}_{*}\right) \leq \|x - \tilde{x}\|,
    \end{equation*}
we have that $\operatorname{dist}\left(x, X^{*}\right) = \operatorname{dist}\left(x, X^{*} \cap \mathcal{M}_{*}\right)$. 
    If $x \in \mathcal{M}_{*}$ and $x$ is within a sufficiently small neighborhood of $x^*$ (w.l.o.g. suppose this neighborhood is $B\left(x^{*}, b_1\right)$), we have 
\begin{align*}
& i \in \mathcal{A}(x^*) \Leftrightarrow x^i = 0  \Leftrightarrow | x^i - \nabla_i f(x) | < \gamma, \\
& i \notin \mathcal{A}(x^*) \Leftrightarrow x^i \neq 0  \Leftrightarrow | x^i - \nabla_i f(x) | > \gamma,
\end{align*}    
by SC and continuity. Therefore, in this neighborhood we have
    $$\|x - \operatorname{prox}_h(x-\nabla f(x))\| = \left( \sum_{i \notin \mathcal{A}(x^*)} (\nabla_i f(x) + \gamma {\rm sign}(x^i))^2 \right)^{1/2}  = \|\operatorname{grad}_{\mathcal{M}_{*}} \psi(x)\|,$$ 
which concludes the proof.
\end{proofs}

\begin{assumption}[Lipschitz Hessian]\label{asp: liphess}
    Given an optimal point $x^*$, there exists a constant $b_2 > 0$ such that $\operatorname{Hess}_{\mathcal{M}_{*}} f$ is Lipschitz continuous over $B\left(x^{*}, b_2\right) \cap \mathcal{M}_{*}$.
\end{assumption}

\begin{remark}
    Assumption~\ref{asp: liphess} holds when $\nabla^2 f$ is Lipschitz continuous around $x^*$,  which can be derived from the definition \eqref{def: RHes} and nonexpansiveness of the projection operator. This assumption is crucial for establishing a fast convergence of Newton-type methods.  We would like to point out that $f$ in many practical problems has a locally Lipschitz continuous Hessian matrix, e.g., least squares, logistic regression and Poisson regression.
\end{remark}

The identification property and convergence guarantees of Algorithm~\ref{2m-proj} are as follows.
 \begin{theorem}
    \label{local-convergence-rate}
    Suppose that $f$ is convex and twice continuously differentiable.  Assumption~\ref{As: basic} holds. Let $ H_k \triangleq [\nabla^2 f(x_k)]_{I_k^- \times I_k^-} \succeq 0$ in Algorithm~\ref{2m-proj}. Assume that the sequence generated by Algorithm~\ref{2m-proj} admits a limit point $x^* \in X^*$ such that
    Assumption~\ref{asp: sc},~\ref{asp: eb}, \ref{asp: liphess} hold at $x^*$. Then after finite number of iterations the sequence $\left\{x_k\right\}$ lies in $\mathcal{M}_{*}$ and $x_k \rightarrow x^*$ superlinearly with parameter $1+\delta$.
\end{theorem}

Next we discuss Lemma~\ref{manifold-identification} - \ref{lm: Cauchy}, which can be viewed as six steps to prove the theorem.  We implicitly use the same assumptions and notations as in Theorem~\ref{local-convergence-rate} for these lemmas.

    \begin{lemma}
        \label{manifold-identification}
There exists a neighborhood $B(x^*, b_3)$ such that for all $x_k \in B(x^*, b_3)$,
        $I_k^+ = \mathcal{A}(x^*) = \mathcal{A}(x_{k+1})$, and
 \begin{align*}
            I_k^+ = \left\{ i \mid | x_k^i | \le \epsilon_k, | g_k^i | < \gamma \right\},\;
            I_k^{-+} = \left\{ i \mid x_k^i > \epsilon_k\right\},\;
                I_k^{--} = \left\{ i \mid x_k^i < -\epsilon_k \right\}.
        \end{align*}
    \end{lemma}

    \begin{remark}\label{rmk: wlog}
    By Lemma~\ref{manifold-identification}, \eqref{optcon}, \eqref{ineq: pkbd} and continuity,  there exists a neighborhood $B(x^*, \bar{b}_3) \subseteq B(x^*, b_3)$ such that 
        $
            \|\bar{p}_k\| \leq \frac{b_3}{2}.
        $
        Therefore, for all $x_k \in B(x^*, \min\{\frac{b_3}{2}, \bar{b}_3\})$,
        \begin{equation*}
            \|x_{k+1}-x^*\| = \| [x_{k}-x^*]_{I_k^-} + t_k \bar{p}_{k}\| \le \| [x_{k}-x^*]_{I_k^-} \| + \| t_k \bar{p}_{k}\| \leq \frac{b_3}{2} + \frac{b_3}{2} = b_3.
        \end{equation*}
        Thus, for all $x_k \in B(x^*, \min\{\frac{b_3}{2}, \bar{b}_3\})$,
        \begin{equation*}
            I_k^+ = \mathcal{A}(x^*) = \mathcal{A}(x_{k+1}) = I_{k+1}^+ = \mathcal{A}(x_{k+2}),
        \end{equation*}
        which means that $x_{k+1}, x_{k+2} \in \mathcal{M}_*$.  With similar argument, we can show that for any positive $b \le b_3$, there exists $\bar b > 0$ such that if $x_k \in B(x^*,\bar b)$, 
        then $x_{k+1}, x_{k+2} \in \mathcal{M}_*$ and $x_{k+1} \in B(x^*,b)$.
    \end{remark}

    By Lemma~\ref{manifold-identification} and Remark~\ref{rmk: wlog}, if $x^*$ satisfies SC then for $x_k$ sufficiently close to $x^*$, our algorithm identifies the active set.  {Therefore, } Algorithm~\ref{2m-proj} generates infinitely many consecutive pairs of iterates on the manifold $\mathcal{M}_{*}$.

    By Theorem~\ref{prop: eb-ebman}, we know that if SC and EB hold at $x^*$, then the manifold EB condition holds at $x^*$.
    Thus, for Lemma~\ref{eb-pk} - \ref{eb-point}, we treat manifold EB at $x^*$ as a given assumption.  We also suppose that $x_k,x_{k+1} \in \mathcal{M}_*$.  Based on our discussions in Remark~\ref{rmk: wlog},  this is a reasonable assumption since such a pair exists in any small neighborhood of $x^*$.
    \begin{lemma}
        \label{eb-pk}
        There exists a neighborhood $B(x^*, b_4)$ with $b_4 \le \min\{b_1,b_2/2,b_3\}$ such that if $x_k \in B(x^*, b_4)$, then
        \begin{equation*}
            \|\bar{p}_k\| \leq \mathcal{O}(\operatorname{dist}\left(x_k, X^{*} \cap \mathcal{M}_{*}\right))
        \end{equation*}
    \end{lemma}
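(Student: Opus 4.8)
The plan is to exploit that, once $x_k \in \mathcal{M}_*$ (guaranteed by \cref{manifold-identification} and \cref{rmk: wlog} as long as $b_4 \le b_3$), the vector $\bar p_k$ is exactly a regularized, inexact Riemannian Newton step, and to show it scales \emph{linearly} with the distance to the solution set by comparing $\operatorname{grad}_{\mathcal{M}_*}\psi(x_k)$ against its first-order model around the nearest optimal point. First I would record the algebraic identities valid on the manifold: since $I_k^+ = \mathcal{A}(x^*)$ and $x_k^i = 0$ with $|g_k^i| < \gamma$ for $i \in I_k^+$ (by SC and continuity), the block $[x_k - \operatorname{prox}_h(x_k - g_k)]_{I_k^+}$ vanishes, so $\mu_k = c\|\bar g_k + \bar\omega_{k,\epsilon}\|^\delta$; moreover $\bar g_k + \bar\omega_{k,\epsilon}$ is precisely $\operatorname{grad}_{\mathcal{M}_*}\psi(x_k)$ (the $\ell_1$ term is affine on $\mathcal{M}_*$), and $H_k = [\nabla^2 f(x_k)]_{I_k^-\times I_k^-}$ coincides with the Riemannian Hessian $\operatorname{Hess}_{\mathcal{M}_*}\psi(x_k)$.

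Next I would set $\tilde x_k \triangleq \Pi_{X^*\cap\mathcal{M}_*}(x_k)$, so that $\|x_k - \tilde x_k\| = \operatorname{dist}(x_k, X^*\cap\mathcal{M}_*)$ and, since $\tilde x_k$ minimizes $\psi$ on $\mathcal{M}_*$, $\operatorname{grad}_{\mathcal{M}_*}\psi(\tilde x_k) = 0$. Because $\mathcal{M}_*$ is affine, $\operatorname{Exp}$ reduces to $x\mapsto x+s$ and parallel transport is the identity, so applying \cref{ineq: liphess2} of \cref{lm: liphess} at base point $\tilde x_k$ with $s = x_k - \tilde x_k$, together with \cref{asp: liphess} to replace $\operatorname{Hess}_{\mathcal{M}_*}\psi(\tilde x_k)$ by $H_k$, yields
\begin{equation*}
    \bar g_k + \bar\omega_{k,\epsilon} = \operatorname{grad}_{\mathcal{M}_*}\psi(x_k) = H_k(x_k - \tilde x_k) + e_k, \qquad \|e_k\| \le \tfrac{3L_H}{2}\operatorname{dist}(x_k, X^*\cap\mathcal{M}_*)^2.
\end{equation*}
This is the crux: the Riemannian gradient lies, up to a quadratic remainder, in the range of $H_k$, which is exactly what lets the regularized inverse act as a bounded operator on it.

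I would then substitute into $\bar p_k = (H_k + \mu_k I)^{-1}(\bar g_k + \bar\omega_{k,\epsilon} + r_k)$ and bound the three resulting pieces. Using $H_k \succeq 0$ we have $\|(H_k + \mu_k I)^{-1}H_k\| \le 1$ and $\|(H_k + \mu_k I)^{-1}\| \le \mu_k^{-1}$, so the dominant term contributes at most $\operatorname{dist}(x_k, X^*\cap\mathcal{M}_*)$; the remainder contributes $\|e_k\|/\mu_k$; and the residual contributes $\|r_k\|/\mu_k \le \tau\|\bar p_k\|$ by \cref{residual}. To control $\|e_k\|/\mu_k$ I would invoke manifold EB (\cref{asp: ebman}, available via \cref{prop: eb-ebman}) to get $\mu_k = c\|\operatorname{grad}_{\mathcal{M}_*}\psi(x_k)\|^\delta \ge c\,(c_1^{-1}\operatorname{dist}(x_k,X^*\cap\mathcal{M}_*))^\delta$, whence $\|e_k\|/\mu_k = \mathcal{O}(\operatorname{dist}(x_k,X^*\cap\mathcal{M}_*)^{2-\delta}) = o(\operatorname{dist}(x_k,X^*\cap\mathcal{M}_*))$ since $\delta \in (0,1)$. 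Collecting terms gives $(1-\tau)\|\bar p_k\| \le \mathcal{O}(\operatorname{dist}(x_k, X^*\cap\mathcal{M}_*))$, which is the claim after shrinking $b_4$.

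The main obstacle is the quadratic-model estimate $\|\operatorname{grad}_{\mathcal{M}_*}\psi(x_k) - H_k(x_k-\tilde x_k)\| = \mathcal{O}(\operatorname{dist}^2)$: it requires carefully matching the Riemannian objects ($\operatorname{grad}$, $\operatorname{Hess}$, $\operatorname{Exp}$, parallel transport) on the affine manifold $\mathcal{M}_*$ to their Euclidean counterparts, verifying $\operatorname{grad}_{\mathcal{M}_*}\psi(\tilde x_k) = 0$, and combining the two Lipschitz-Hessian inequalities. The subsequent point — that the $\mu_k^{-1}$ blow-up is tamed by the EB lower bound on $\mu_k$, so that the error term stays $o(\operatorname{dist})$ rather than merely $\mathcal{O}(\operatorname{dist}^{1-\delta})$ — is the essential reason the bound is \emph{linear} in the distance rather than of fractional order, and is what distinguishes this argument from the crude estimate \cref{ineq: pkbd}.
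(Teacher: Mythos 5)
Your proposal is correct and follows essentially the same route as the paper: both decompose $\bar p_k = (H_k+\mu_k I)^{-1}(\bar g_k+\bar\omega_{k,\epsilon}+r_k)$ into a Hessian-range term bounded by $\|(H_k+\mu_k I)^{-1}H_k\|\,\mathrm{dist}$, a quadratic Taylor remainder tamed by the EB lower bound $\mu_k \gtrsim \mathrm{dist}^\delta$, and a residual absorbed via $\|r_k\|\le\tau\mu_k\|\bar p_k\|$. The only cosmetic differences are that you expand the gradient around $\tilde x_k$ (picking up the extra Hessian-Lipschitz switch and the constant $3L_H/2$) where the paper expands around $x_k$ directly, and you use the sharp bound $\|(H_k+\mu_k I)^{-1}H_k\|\le 1$ where the paper settles for $2$.
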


    Lemma~\ref{eb-pk} claims that $\bar{p}_k$ has a better upper bound than \eqref{ineq: pkbd} which will be used later.  The next lemma states that the linesearch of Algorithm~\ref{2m-proj} will eventually output a unit stepsize $t_k=1$.
    
    \begin{lemma}
        \label{line-search-tk}
        There exists a neighborhood $B(x^*, b_5)$ with $b_5 \le b_4$ such that if $x_k \in B(x^*, b_5)$,  then $t_k=1$.
    \end{lemma}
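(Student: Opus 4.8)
The plan is to show that the unit step $t=1$ already passes the Armijo test of \cref{2m-proj}, so backtracking never triggers and $t_k=1$. It is worth noting at the outset that the sufficient conditions of \cref{thm: suffdescent} are useless for this purpose: they demand $t\le 2(1-\sigma)(1-\tau)\mu_k/L_g$ and $t\le \epsilon_k/G$, and both $\mu_k$ and $\epsilon_k$ vanish as $x_k\to x^*$, which would force $t_k\to 0$. The sharper bound $\|\bar p_k\|=\mathcal{O}(\operatorname{dist}(x_k,\tilde X^*))$ from \cref{eb-pk}, combined with \cref{asp: liphess}, is precisely what lets us circumvent this.

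First I would reduce the step to a clean regularized Newton step on the manifold. Since $x_k\in\mathcal{M}_*$ we have $x_k^+=0$, and because every $i\in I_k^+=\mathcal{A}(x^*)$ satisfies $|g_k^i|<\gamma$, soft thresholding gives $x_k^i(t)=S_{t\gamma}(-tg_k^i)=0$, whence $G_t^k(x_k^+)=0$ for every $t$; in particular the gradient-map term of the Armijo right-hand side is zero at $t=1$. Moreover $|x_k^i|$ is bounded below by a positive constant on $I_k^-$ (as in \cref{manifold-identification}) while $\|\bar p_k\|=\mathcal{O}(\operatorname{dist})$, so for $x_k$ close enough to $x^*$ no coordinate of $x_k-\bar p_k$ changes sign and the projection does not clip; thus $x_k(1)=x_k-\bar p_k$ (embedded with zeros on $I_k^+$) is a genuine move along the tangent vector $s=-\bar p_k\in T_{x_k}\mathcal{M}_*$. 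As $\mathcal{M}_*$ is a flat subspace, $\operatorname{Exp}_{x_k}(s)=x_k+s=x_k(1)$ coincides with the retraction, while $\bar g_k+\bar\omega_{k,\epsilon}=\operatorname{grad}_{\mathcal{M}_*}\psi(x_k)$ and $H_k=\operatorname{Hess}_{\mathcal{M}_*}\psi(x_k)$ (the $\ell_1$ term being linear on $\mathcal{M}_*$), so \cref{lm: liphess} applies to $\psi$ with the constant $L_H$ of \cref{asp: liphess}.

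The core estimate is then a regularized-Newton descent computation. Applying \cref{ineq: liphess1} with $s=-\bar p_k$ gives
\begin{equation*}
\psi(x_k(1))-\psi(x_k)\le -\langle \bar p_k,\bar g_k+\bar\omega_{k,\epsilon}\rangle+\tfrac12\langle \bar p_k,H_k\bar p_k\rangle+\tfrac{L_H}{6}\|\bar p_k\|^3 .
\end{equation*}
Substituting $H_k\bar p_k=\bar g_k+\bar\omega_{k,\epsilon}+r_k-\mu_k\bar p_k$ from \cref{linear-system}, then using $(\bar g_k+\bar\omega_{k,\epsilon})^T\bar p_k\ge(1-\tau)\mu_k\|\bar p_k\|^2$ (exactly as in \cref{lm: tbd-}) together with the residual bound \cref{residual}, $\|r_k\|\le\tau\mu_k\|\bar p_k\|$, collapses everything to
\begin{equation*}
\psi(x_k(1))-\psi(x_k)\le -(1-\tau)\mu_k\|\bar p_k\|^2+\tfrac{L_H}{6}\|\bar p_k\|^3 .
\end{equation*}
Since the required decrease at $t=1$ is $\sigma(1-\tau)\mu_k\|\bar p_k\|^2$ (the $G$-term being zero), the Armijo test passes as soon as $\tfrac{L_H}{6}\|\bar p_k\|\le(1-\sigma)(1-\tau)\mu_k$.

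I would finish by comparing $\|\bar p_k\|$ and $\mu_k$ against the distance to the solution set. By \cref{eb-pk} there is $C_1>0$ with $\|\bar p_k\|\le C_1\operatorname{dist}(x_k,\tilde X^*)$, whereas \cref{mu-lowerbound} gives $\mu_k\ge (c/c_1^\delta)\operatorname{dist}(x_k,\tilde X^*)^\delta$. Hence the desired inequality follows once
\begin{equation*}
\operatorname{dist}(x_k,\tilde X^*)^{\,1-\delta}\le \frac{6(1-\sigma)(1-\tau)c}{L_H C_1 c_1^{\delta}} ,
\end{equation*}
and since $\delta\in(0,1)$ the left-hand side tends to $0$ as $x_k\to x^*$; choosing $b_5\le b_4$ small enough to enforce this (together with the no-sign-flip and identification requirements used above) yields $t_k=1$. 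The point demanding the most care — and the main obstacle — is precisely this last balance: the regularizer $\mu_k$ is itself vanishing, so it is not a priori clear that it can dominate the cubic model error. The resolution is that $\mu_k\sim\operatorname{dist}^\delta$ decays strictly more slowly than $\|\bar p_k\|\sim\operatorname{dist}$ because $\delta<1$, which is exactly where the choice $\delta\in(0,1)$ and the error-bound scaling of $\mu_k$ are indispensable.
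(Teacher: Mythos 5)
Your proposal is correct and follows essentially the same route as the paper's proof: a cubic Taylor bound on $\mathcal{M}_*$ via \cref{lm: liphess}, substitution of the regularized Newton system \cref{linear-system} with the residual bound to obtain $\psi(x_k(1))-\psi(x_k)\le -(1-\tau)\mu_k\|\bar p_k\|^2+\tfrac{L_H}{6}\|\bar p_k\|^3$, and the final balance $\tfrac{L_H}{6}\|\bar p_k\|\le(1-\sigma)(1-\tau)\mu_k$ settled by \cref{eb-pk} and \cref{mu-lowerbound} since $\delta\in(0,1)$. Your explicit verification that $G_t^k(x_k^+)=0$ and that no clipping occurs is a welcome piece of bookkeeping that the paper leaves implicit.
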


    From the above discussions, we know that Algorithm~\ref{2m-proj} will degenerate to an inexact Newton method on the manifold $\mathcal{M}_*$ at iteration $k$ for $x_k$ sufficiently close to $x^*$ given that $x_k,x_{k+1} \in \sM_*$ .  Leveraging this fact,  the next lemma shows that the distance between $x_{k+1}$ and the optimal set $X^*$ is bounded by the distance between $x_k$ and $X^*$ raised to the power of $1+\delta$.
    
    \begin{lemma}
        \label{eb-point}
If $x_k \in B(x^*, b_5)$, then
        \begin{equation*}
            \operatorname{dist}\left(x_{k+1}, X^{*} \cap \mathcal{M}_{*}\right) \leq \mathcal{O}(\operatorname{dist}\left(x_k, X^{*} \cap \mathcal{M}_{*}\right)^{1+\delta}).
        \end{equation*}
    \end{lemma}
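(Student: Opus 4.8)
The plan is to combine the manifold error bound at the new iterate $x_{k+1}$ with a careful estimate of its Riemannian gradient produced by the inexact Newton step. Since $x_k \in B(x^*,b_5)$, \cref{line-search-tk} gives $t_k = 1$, and by \cref{manifold-identification} together with \cref{rmk: wlog} the active set is already fixed, so the update coincides with the pure step $x_{k+1} = \hat{x}_{k+1} = x_k - s_k$ of \eqref{def: xhat}, with $[s_k]_{I_k^+}=0$ and $[s_k]_{I_k^-} = \bar{p}_k$; in particular $x_{k+1}\in\mathcal{M}_*$ and the iteration is genuinely an (inexact) Newton step on $\mathcal{M}_*$. The target estimate follows once we show $\|\operatorname{grad}_{\mathcal{M}_*}\psi(x_{k+1})\| \le \mathcal{O}(\operatorname{dist}(x_k, X^*\cap\mathcal{M}_*)^{1+\delta})$, because the manifold error bound \cref{asp: ebman} (applicable at $x_{k+1}$ since $b_5\le b_1$ and $x_{k+1}$ stays in the neighborhood by \cref{rmk: wlog}) then yields $\operatorname{dist}(x_{k+1},X^*\cap\mathcal{M}_*) \le c_1\|\operatorname{grad}_{\mathcal{M}_*}\psi(x_{k+1})\|$.

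To estimate the gradient at $x_{k+1}$, I would exploit that $\mathcal{M}_*$ is a linear (coordinate-sparsity) subspace, so its exponential map is $\operatorname{Exp}_{x_k}(s)=x_k+s$ and parallel transport is the identity. Applying \eqref{ineq: liphess2} of \cref{lm: liphess} to $\psi$ along the step $-s_k$ (recall $h$ is linear on $\mathcal{M}_*$, so $\psi$ inherits the Lipschitz Hessian guaranteed by \cref{asp: liphess}) gives $\|\operatorname{grad}_{\mathcal{M}_*}\psi(x_{k+1}) - \operatorname{grad}_{\mathcal{M}_*}\psi(x_k) + \operatorname{Hess}_{\mathcal{M}_*}\psi(x_k)[s_k]\| \le \tfrac{L_H}{2}\|\bar{p}_k\|^2$. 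On the block $I_k^-$ the first two Riemannian objects are exactly $\bar{g}_k + \bar{\omega}_{k,\epsilon}$ and $H_k\bar{p}_k$, so substituting the Newton system \eqref{linear-system} in the rearranged form $H_k\bar{p}_k - (\bar{g}_k+\bar{\omega}_{k,\epsilon}) = r_k - \mu_k\bar{p}_k$ collapses the left-hand side to $\operatorname{grad}_{\mathcal{M}_*}\psi(x_{k+1}) + r_k - \mu_k\bar{p}_k$ (using that $\operatorname{grad}_{\mathcal{M}_*}\psi(x_{k+1})$ is supported on $I_k^-$). Hence, by the triangle inequality, $\|\operatorname{grad}_{\mathcal{M}_*}\psi(x_{k+1})\| \le \tfrac{L_H}{2}\|\bar{p}_k\|^2 + \|r_k\| + \mu_k\|\bar{p}_k\|$.

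It then remains to control these three terms in powers of $\operatorname{dist}(x_k,X^*\cap\mathcal{M}_*)$. From \cref{eb-pk} we have $\|\bar{p}_k\| = \mathcal{O}(\operatorname{dist}(x_k,X^*\cap\mathcal{M}_*))$; the residual condition \eqref{residual} gives $\|r_k\|\le\tau\mu_k\|\bar{p}_k\|$; and the remaining ingredient is an \emph{upper} bound on $\mu_k$, which I would obtain from Lipschitzness of the Riemannian gradient: since $\operatorname{grad}_{\mathcal{M}_*}\psi(\bar{x}_k)=0$ at the projection $\bar{x}_k$, we get $\mu_k = c\|\operatorname{grad}_{\mathcal{M}_*}\psi(x_k)\|^\delta \le c\,\lambda_{\max}^\delta\, \operatorname{dist}(x_k,X^*\cap\mathcal{M}_*)^\delta$. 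Combining, both $\|r_k\|$ and $\mu_k\|\bar{p}_k\|$ are $\mathcal{O}(\operatorname{dist}^{1+\delta})$, while $\tfrac{L_H}{2}\|\bar{p}_k\|^2 = \mathcal{O}(\operatorname{dist}^{2})$; since $\delta<1$ and the distance is small, $\operatorname{dist}^2 \le \operatorname{dist}^{1+\delta}$, so the order $\mathcal{O}(\operatorname{dist}^{1+\delta})$ dominates and the claim follows.

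I expect the main obstacle to be the gradient identity at $x_{k+1}$: one must justify that $\operatorname{grad}_{\mathcal{M}_*}\psi(x_{k+1})$ is supported on $I_k^-$ with the correct smooth-extension value, that parallel transport genuinely drops out on this flat manifold, and that the Newton-system substitution is carried out with the right signs so that the leftover is precisely $r_k - \mu_k\bar{p}_k$. Once that cancellation is arranged correctly, the remaining power counting, including the upper estimate on $\mu_k$, is routine.
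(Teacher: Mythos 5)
Your proposal is correct and follows essentially the same route as the paper's proof: apply the manifold error bound at $x_{k+1}$, estimate $\|\operatorname{grad}_{\mathcal{M}_*}\psi(x_{k+1})\|$ via the Lipschitz-Hessian inequality combined with the Newton system (leaving exactly $\mu_k\bar{p}_k - r_k$ plus an $\mathcal{O}(\|\bar{p}_k\|^2)$ term), and then power-count using \cref{eb-pk}, the residual condition, and the upper bound $\mu_k = \mathcal{O}(\operatorname{dist}(x_k,X^*\cap\mathcal{M}_*)^\delta)$ from Lipschitzness of the Riemannian gradient. The only cosmetic difference is your use of $\lambda_{\max}$ in place of the paper's gradient Lipschitz constant $L_\psi$ when bounding $\mu_k$.
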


    In the following lemma, we show that $x_{k}$ will be confined in a small neighborhood around $x^*$ for any $k$ large enough by induction.
    This indicates that Lemma \ref{manifold-identification} - \ref{eb-point} hold for any $k$ large enough.
    
    \begin{lemma}
        \label{approaching}
        For any $ b_6 \in (0, b_5)$, there exists a neighborhood $B(x^*, r)$ such that 
        for all $x_k \in B(x^*, r)$, we have
        \begin{equation*}
            x_{k+i} \in B(x^*, b_6) \quad \forall i = 1,2,\ldots
        \end{equation*}
    \end{lemma}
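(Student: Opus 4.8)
The plan is to treat this as a basin-of-attraction (trapping) argument built on the superlinear contraction already established. Write $d_k \triangleq \operatorname{dist}(x_k, X^* \cap \mathcal{M}_*)$ and note that $x^* \in X^* \cap \mathcal{M}_*$, so $d_k \le \|x_k - x^*\|$. The two ingredients are: from \cref{eb-point}, whenever $x_k \in B(x^*, b_5)$ one has $d_{k+1} \le C_1 d_k^{1+\delta}$ for some constant $C_1 > 0$; and from \cref{eb-pk} together with $t_k = 1$ (\cref{line-search-tk}), the displacement satisfies $\|x_{k+1} - x_k\| = \|\bar p_k\| \le C_2 d_k$ for some $C_2 > 0$. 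The idea is that the first estimate forces $d_k$ to decay geometrically (indeed superlinearly) once it is small, while the second converts this rapid decay into a summable total displacement $\sum_j \|x_{k+j+1}-x_{k+j}\|$, which confines the whole tail of the sequence to a prescribed ball around $x^*$.

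Concretely, I would first fix $\rho > 0$ small enough that $q \triangleq C_1 \rho^\delta < 1$. Then, as long as $d_{k+j} \le \rho$, the contraction gives $d_{k+j+1} \le (C_1 d_{k+j}^\delta)\, d_{k+j} \le q\, d_{k+j}$, and since this preserves $d_{k+j+1} \le \rho$, an elementary induction yields $d_{k+j} \le q^{\,j} d_k$ for all $j$. Summing the displacement bound then gives $\sum_{j \ge 0} \|x_{k+j+1}-x_{k+j}\| \le C_2 \sum_{j\ge 0} q^{\,j} d_k = \tfrac{C_2}{1-q}\, d_k$. Choosing the radius $r \le \rho$ small enough that $r\bigl(1 + \tfrac{C_2}{1-q}\bigr) \le b_6$, the triangle inequality gives, for any $i$, $\|x_{k+i} - x^*\| \le \|x_k - x^*\| + \sum_{j=0}^{i-1}\|x_{k+j+1}-x_{k+j}\| \le r + \tfrac{C_2}{1-q}\, r \le b_6$, which is exactly the claim.

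The one point that needs care --- and the main obstacle --- is that the two key estimates are only licensed when the \emph{current} iterate already lies in $B(x^*, b_5)$, so one cannot simply chain them ad infinitum without first knowing the iterates stay put; this is a circular-looking dependency. I would resolve it by strong induction on $i$: assuming $x_k, \ldots, x_{k+i} \in B(x^*, b_6) \subseteq B(x^*, b_5)$, the estimates are valid for indices $0,\ldots,i$, which produces the bounds $d_{k+j} \le q^{\,j} d_k$ and the displacement sum up to index $i$, and hence places $x_{k+i+1}$ back inside $B(x^*, b_6)$; the base case $i=0$ is just $x_k \in B(x^*, r) \subseteq B(x^*, b_6)$ (note $r \le b_6$). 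Since $b_6 \le b_5 \le b_4 \le b_3$, every invoked result (manifold identification in \cref{manifold-identification}, \cref{eb-pk}, \cref{line-search-tk} and \cref{eb-point}) indeed applies at each step, closing the induction.
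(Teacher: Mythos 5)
Your proof is correct and follows essentially the same route as the paper's: a trapping argument by induction, combining the displacement bound $\|x_{k+j+1}-x_{k+j}\| = \|\bar p_{k+j}\| \le C_2\, d_{k+j}$ from \cref{eb-pk} and \cref{line-search-tk} with the contraction from \cref{eb-point}, and summing a convergent series to confine the tail in $B(x^*,b_6)$. The only difference is cosmetic: you relax the superlinear recursion to a geometric one ($d_{k+j}\le q^j d_k$ with $q = C_1\rho^\delta<1$), whereas the paper tracks the doubly-exponential decay $d_{k+j}\le 2r\,2^{-(1+\delta)^j}$ and bounds $\sum_j 2^{-(1+\delta)^j}$ by $\sum_j 2^{-\delta j}$; both yield the same summability and the same choice of $r$.
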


    Finally, these results indicate that $\{x_k\}$ is a Cauchy sequence.
    \begin{lemma}\label{lm: Cauchy}
        $\{x_k\}$ is a Cauchy sequence and hence converges to $x^*$ superlinearly with parameter $1+\delta$.
    \end{lemma}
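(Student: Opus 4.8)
The plan is to turn the per-step displacement bound into a summable total variation by exploiting the doubly-exponential decay of the distance-to-solution already established, which yields the Cauchy property at once and then the rate. First I would fix an index $k$ large enough that $x_k \in B(x^*,r)$, with $r$ as in \cref{approaching}; by \cref{manifold-identification}, \cref{rmk: wlog}, \cref{line-search-tk} and \cref{approaching} every subsequent iterate then lies on $\mathcal{M}_*$, stays in $B(x^*,b_6)$, and uses the unit step $t_j=1$. In this regime the update reduces to $x_{j+1}=x_j-s_j$ with $[s_j]_{I_j^-}=\bar{p}_j$ and $[s_j]_{I_j^+}=0$, so $\|x_{j+1}-x_j\|=\|\bar{p}_j\|$. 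Writing $d_j \triangleq \operatorname{dist}(x_j, X^*\cap\mathcal{M}_*)$, \cref{eb-pk} gives $\|\bar{p}_j\|\le c_2 d_j$, while the chain of inequalities inside the proof of \cref{approaching} supplies the doubly-exponential bound $d_{k+j}\le 2r\,2^{-q^j}$ with $q=1+\delta$.

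For the Cauchy property I would estimate, for $m>n\ge k$,
\begin{equation*}
\|x_m-x_n\| \le \sum_{j=n}^{m-1}\|x_{j+1}-x_j\| = \sum_{j=n}^{m-1}\|\bar{p}_j\| \le c_2 \sum_{j=n}^{\infty} d_j.
\end{equation*}
Since $q^j \ge \delta j$, the series $\sum_j 2^{-q^j}$ is dominated by a geometric series exactly as in \cref{approaching}, so the tail $\sum_{j=n}^{\infty} d_j \to 0$ as $n\to\infty$; hence $\{x_k\}$ is Cauchy and converges to some $\bar{x}$. As $x^*$ is a limit point of $\{x_k\}$ and a convergent sequence has a unique limit, $\bar{x}=x^*$, so $x_k\to x^*$.

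For the rate I would show that $\|x_m-x^*\|$ and $d_m$ are comparable. One direction is trivial, $d_m\le\|x_m-x^*\|$, since $x^*\in X^*\cap\mathcal{M}_*$. For the reverse, letting $m\to\infty$ in the displacement estimate gives $\|x_m-x^*\|\le c_2\sum_{j=m}^{\infty} d_j$, and using $d_{j+1}\le c_3 d_j^{1+\delta}$ from \cref{eb-point} the tail is dominated by its leading term once $d_m$ is small, i.e. $\sum_{j=m}^{\infty} d_j\le C d_m$ for an absolute constant $C$; thus $\|x_m-x^*\|\le c_2 C\, d_m$. Combining the two directions and applying \cref{eb-point} once more,
\begin{equation*}
\|x_{m+1}-x^*\| \le c_2 C\, d_{m+1} \le c_2 C c_3\, d_m^{1+\delta} \le c_2 C c_3\, \|x_m-x^*\|^{1+\delta},
\end{equation*}
which is Q-superlinear convergence of order $1+\delta$.

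I expect the main obstacle to be the reverse comparison $\|x_m-x^*\|\le C d_m$: it is not a pointwise inequality but follows only after summing the displacements and arguing that the doubly-exponentially decaying tail $\sum_{j\ge m} d_j$ is controlled by its first term $d_m$ with a constant $C$ \emph{independent of} $m$. Securing this uniformity is the delicate point, although it follows cleanly from the same domination $q^j\ge\delta j$ already used in the Cauchy step.
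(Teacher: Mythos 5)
Your proposal is correct and follows essentially the same route as the paper's proof: both telescope the displacement bounds $\|x_{j+1}-x_j\|=\|\bar p_j\|\le c_2\,\operatorname{dist}(x_j,X^*\cap\mathcal{M}_*)$ against the contraction supplied by \cref{eb-point} to obtain a summable tail (hence the Cauchy property and the reverse comparison $\|x_m-x^*\|\le C\,\operatorname{dist}(x_m,X^*\cap\mathcal{M}_*)$ with $C$ uniform in $m$), and then apply \cref{eb-point} once more to transfer the order-$(1+\delta)$ rate to $\|x_k-x^*\|$. The only cosmetic difference is that the paper dominates the tail by a geometric series with a ratio $\rho<1$ extracted from the superlinear contraction, whereas you invoke the doubly-exponential bound from \cref{approaching} directly; the two dominations are interchangeable.
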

        
\begin{remark}
We would like to emphasize that the assumptions of Theorem~\ref{local-convergence-rate} are either mild or commonly uesd in literature. In particular, convexity, second-order smoothness together with Assumption~\ref{As: basic}, Assumption~\ref{asp: eb} and \ref{asp: liphess} hold for a wide range of problems, e.g., least squares and logistic regression with $\ell_1$-regularization. Assumption~\ref{asp: eb} and \ref{asp: liphess} are also used in the SQA method \citep{yue2019family} and the semi-smooth Newton method \citep{hu2022local} to establish superlinear convergence.  Assumption~\ref{asp: sc} is also a common assumption in literature \citep{de2016fast,bertsekas1982projected} to guarantee manifold identification for active set methods.
\end{remark}

\section{Numerical experiment}\label{sec: num}

In this section, we study the practical performance of the two-metric adaptive projection method and compare it with some existing algorithms. All experiments are coded in MATLAB (R2024a) and run on a desktop
with a 9700X CPU and 32 GB of RAM. The source code can be downloaded from \url{https://github.com/Hanju-Wu/TMAP}. We will compare the performance of the two-metric adaptive projection method with some state-of-the-art algorithms on logistic regression problems and LASSO problems.

\subsection{Numerical results on logistic regression problems}
We consider the ($\ell_1$-norm regularized)
logistic regression problem:
\begin{equation*}
    \min _{x \in \mathbb{R}^{n}}\frac{1}{m} \sum_{i=1}^{m} \log \left(1+\exp \left(-b_{i} \cdot a_{i}^{T} x\right)\right)+\gamma\|x\|_{1} .
\end{equation*}
Here, $a_1, a_2, \ldots, a_m \in \mathbb{R}^n$ are given data samples; $b_1, b_2, \ldots, b_m \in \{-1,1\}$ are given
labels; $\gamma$ is a given regularization parameter. Let $A \triangleq [a_1,a_2,...,a_m]^T$ denotes the data matrix. This problem is a classic classification model in machine learning. It is a standard problem
for testing the efficiency of different algorithms in solving composite optimization problems. In our experiment, we use the LIBSVM data sets\footnote{\url{https://www.csie.ntu.edu.tw/~cjlin/libsvmtools/datasets/}} \citep{chang2008libsvm}: \textbf{rcv1.train}, \textbf{rcv1.test}, \textbf{news20} and \textbf{real-sim}, and set $\gamma \triangleq \frac{1}{m}$ same as the setting in \citep{lee2023accelerating}.
The sizes of these data sets are listed in Table~\ref{tab:data}.
\begin{table}[h] \fontsize{11}{15}\selectfont
	\centering
	\caption{\fontsize{11}{15}\selectfont 
	Tested data sets.} 
	\label{tab:data}
        \begin{tabular}{c c c c c c}
		\hline
		& Data set & $n$ & $m$ & nnz of $A$ & sparsity of $x^*$\\ 
		\hline \hline
		\multirow{2}{*}{$n>m$}& \textbf{rcv1.train} & 47,236 & 20,242 & 1,498,952 & 1.18\% \\ 
		& \textbf{news20} & 1,355,191 & 19,996 &  9,097,916 & 0.0373\% \\  \hline \hline
        \multirow{2}{*}{$n<m$}&\textbf{real-sim} & 20,958 & 72,309 & 3,709,083 & 8.18\% \\  
        & \textbf{rcv1.test} & 47,236 & 677,399 & 49,556,258 & 10.79\% \\ \hline
		\hline
	\end{tabular}
    nnz represents number of nonzeros in data matrix $A$.
\end{table}

It is worth mentioning the computation of the linear system \eqref{def: pk}, \eqref{def: Dk}.  We let $H_k \triangleq [\nabla^2 f(x_k)]_{I_k^- \times I_k^-}$.  Yet we do not need to calculate Hessian matrix exactly. We only need to calculate Hessian-vector products when applying conjugate gradient method to solve the linear equations.

For the logistic loss function $f$, we have $\nabla^2 f(x) = \frac{1}{m}A^{T}DA$, where $D$ is a diagonal matrix with:
\begin{align*}
    D_{i i} =\frac{\exp \left(-b_{i} \cdot a_{i}^{T} x\right)}{\left(\exp \left(-b_{i} \cdot a_{i}^{T} x\right)+1\right)^{2}}, \quad i=1, \ldots, m.
\end{align*}
Note that $A$ is sparse in many real data sets, therefore $\nabla^2 f(x)p$ can be computed efficiently with worst-case complexity $\mathcal{O}(nm)$.

Note that the performance of several fast algorithms on the datasets \textbf{rcv1.train}, \textbf{news20} and \textbf{colon-cancer} have been studied in \citep{yue2019family}, where it is shown that \textbf{IRPN} outperforms \textbf{FISTA} \citep{beck2009fast}, \textbf{SpaRSA} \citep{wright2009sparse}, \textbf{CGD} \citep{tseng2009coordinate,yun2011coordinate}, and is comparable to \textbf{newGLMNET} \citep{yuan2012improved}.
We choose to exclude the dataset \textbf{colon-cancer} in our experiments because its size is too small, making the computation time too short to meaningfully compare the performance of different algorithms.  Moreover, the parameter $\gamma$ used in our experiments is different from that in \citep{yue2019family}, which may affect the relative performance of these methods.
For these reasons, we also include \textbf{FISTA}, \textbf{SpaRSA} and \textbf{newGLMNET} in our comparison.
We also include \textbf{AltN} \citep{bareilles2023newton}, since it is a second-order method that exploits the manifold identification property to accelerate convergence.
The detailed settings of these algorithms are as follows:
\begin{enumerate} \fontsize{11}{15}\selectfont
\item {\bf FISTA} This is the fast iterative shrinkage-thresholding algorithm proposed in \citep{beck2009fast}. We use the constant step size $t = 1/L_g$, where $L_g$ is the Lipschitz constant of $\nabla f$. For the logistic regression problem, $L_g = \frac{1}{4m} \lambda_{\max}(A^TA)$. Other settings are the same as the experiment in \citep{yue2019family}.
\item {\bf SpaRSA} This is the sparse reconstruction by the separable approximation algorithm proposed in \citep{wright2009sparse}. We use the same setting as in \citep{wright2009sparse,yue2019family}.
\item {\bf newGLMNET\footnote{\url{https://www.csie.ntu.edu.tw/~cjlin/liblinear/}} (newG)} This is the improved GLMNET algorithm proposed in \citep{yuan2012improved}. We use the same setting as in \citep{yuan2012improved,yue2019family}.
\item {\bf Alternating Newton acceleration\footnote{\url{https://github.com/GillesBareilles/NewtonRiemannAccel-ProxGrad}} (AltN)} This is an accelerated proximal gradient algorithm \citep{bareilles2023newton}, which alternates between proximal gradient step and Riemannian Newton step on the current
manifold $\mathcal{M}_k$. In $\ell_1$-regularized problems, the current manifold is determined by the current support set at iterate $x_k$, i.e., $\mathcal{M}_k \triangleq \{ x \in \mathbb{R}^n: x_i = 0, \forall i \in \mathcal{A}(x_k) \}$, where $\mathcal{A}(x_k) \triangleq \{ i: x_k^i = 0 \}$. Each iteration has the following updates:
\begin{align*}
    x_{k} & \triangleq \operatorname{prox}_{t h}(y_{k-1} - t \nabla f(y_{k-1})), \\
    y_{k} & \triangleq \operatorname{ManAcc}_{\mathcal{M}_k}(x_{k}),
\end{align*}
where $\operatorname{ManAcc}_{\mathcal{M}_k}(x_{k})$ denotes the Riemannian (truncated) Newton step on manifold $\mathcal{M}_k$ starting from point $x_k$. Our implementation is based on the authors' code.  To improve performance, we replace the original stepsize with a Barzilai–Borwein (BB) stepsize combined with a backtracking linesearch.
\item {\bf IRPN\footnote{\url{https://github.com/ZiruiZhou/IRPN}}} This is the family of inexact SQA methods proposed in \citep{yue2019family}, and we use the same setting, i.e. $\theta = \beta = 0.25, \zeta = 0.4, c = 10^{-6}$ and $\eta= 0.5$. We choose $\rho=0.5$ since \textbf{IRPN} has superlinear convergence in this setting and performs best in \cite{yue2019family}.
The coordinate descent algorithm is used for solving the inner problems of \textbf{IRPN}.
\item {\bf Two-metric adaptive projection (TMAP)} This is Algorithm~\ref{2m-proj} we proposed in this paper. We set $c = 10^{-4}$, $\beta = 0.2 $, $\sigma = \tau = 0.1$, $\epsilon = 10^{-3}$ and $\delta = \frac{1}{2}$. We start all linesearch with step size $t = 1$.
\end{enumerate}

We initialize all the tested algorithms at the same point $x_0 = 0$. All the algorithms are terminated if the iterate $x_k$ satisfies $r(x_k) = \| x_k - \operatorname{prox}_{ h} (x_k - \nabla f(x_k)) \| \leq \texttt{tol}$, where $\texttt{tol} \in \{10^{-6}, 10^{-8}, 10^{-10}\}$. Note that for \textbf{SpaRSA} and \textbf{FISTA}, we set a maximum number of iterations as 10,000.

The computational results on the data sets \textbf{rcv1.train}, \textbf{rcv1.test}, \textbf{news20} and \textbf{real-sim}
are presented in Table~\ref{tab:rcv1-train}, \ref{tab:news20}, \ref{tab:real-sim}, \ref{tab:rcv1-test}, respectively. 
In these tables, we report the number of outer iterations, inner iterations, and CPU time (in seconds) required by each algorithm to achieve different accuracy levels.
In Figure~\ref{fig:convergence}, we plot the convergence behavior of the tested algorithms on the data sets \textbf{rcv1.train}, \textbf{news20}, \textbf{real-sim} and \textbf{rcv1.test}. The $y$-axis represents the logarithm of the residual $r(x_k) = \| x_k - \operatorname{prox}_{ h} (x_k - \nabla f(x_k)) \|$, while the $x$-axis represents the iteration number.
\begin{table}[h]
    \centering    
    \caption{Computational results on {\bf rcv1.train}}
    \label{tab:rcv1-train}                                      
    \begin{tabular}{cccccccc}   
        \hline                          
        Tol. & & SpaRSA & FISTA & AltN & IRPN & newG & TMAP \\ 
        \hline \hline      
        \multirow{3}{*}{$10^{-6}$}& outer iter. & - & - & - & 7 & 19 & - \\          
        & inner iter. & 687 & 444 & 63 & 292 & 206 & 11 \\ 
        & time & 2.66 & 2.54 & 0.80 & 0.92 & 0.71 & 0.16 \\
       \hline \hline
       \multirow{3}{*}{$10^{-8}$}& outer iter. & - & - & - & 8 & 25 & - \\          
        & inner iter. & 1377 & 828 & 63 & 392 & 343 & 12 \\
        & time & 5.58 & 4.84 & 0.80 & 1.30 & 1.24 & 0.17 \\
       \hline \hline
       \multirow{3}{*}{$10^{-10}$}& outer iter. & - & - & - & 9 & 31 & - \\            
        & inner iter. & 2083 & 1284 & 64 & 492 & 492 & 13 \\
        & time & 8.04 & 7.07 & 0.83 & 1.53 & 1.63 & 0.19 \\  
       \hline \hline
    \end{tabular}
\end{table}

\begin{table}[h]
    \centering    
    \caption{Computational results on {\bf news20}}
	\label{tab:news20}
    \begin{tabular}{cccccccc}   
        \hline                          
        Tol. & &  SpaRSA & FISTA & AltN & IRPN & newG & TMAP \\       
        \hline \hline      
        \multirow{3}{*}{$10^{-6}$}& outer iter. & - & - & - & 6 & 22 & - \\                
        & inner iter. & 2235 & 1596 & 240 & 497 & 567 & 15 \\    
        & time & 107.28 & 99.92 & 32.86 & 16.73 & 20.24 & 1.53 \\
       \hline \hline
       \multirow{3}{*}{$10^{-8}$}& outer iter. & - & - & - & 9 & 32 & - \\                 
        & inner iter. & 5941 & 2552 & 241 & 797 & 910 & 17 \\     
        & time & 283.08 & 158.77 & 33.42 & 26.60 & 31.40 & 1.62 \\
       \hline \hline
       \multirow{3}{*}{$10^{-10}$}& outer iter. & - & - & - & 13 & 99 & - \\                 
        & inner iter. & 10000 & 3398 & 241 & 1197 & 6970 & 18 \\   
        & time & 501.80 & 221.20 & 33.42 & 41.93 & 259.35 & 1.84 \\
       \hline \hline
    \end{tabular}
\end{table}

\begin{table}[h]
	\centering
	\caption{Computational results on {\bf real-sim}}
	\label{tab:real-sim}
    \begin{tabular}{cccccccc}   
        \hline                          
        Tol. & &  SpaRSA & FISTA & AltN & IRPN & newG & TMAP \\       
        \hline \hline      
        \multirow{3}{*}{$10^{-6}$}& outer iter. & - & - & - & 9 & 18 & - \\          
        & inner iter. & 321 & 454 & 59 & 247 & 63 & 14 \\  
        & time & 3.28 & 5.60 & 1.56 & 2.00 & 0.69 & 0.63 \\
       \hline \hline
       \multirow{3}{*}{$10^{-8}$}& outer iter. & - & - & - & 10 & 24 & - \\            
        & inner iter. & 4847 & 1903 & 76 & 347 & 228 & 19 \\ 
        & time & 44.08 & 23.29 & 2.72 & 2.90 & 2.10 & 0.91 \\
       \hline \hline
       \multirow{3}{*}{$10^{-10}$}& outer iter. & - & - & - & 12 & 33 & - \\             
        & inner iter. & 10000 & 5044 & 79 & 547 & 548 & 21 \\
        & time & 88.13 & 60.12 & 2.92 & 4.55 & 4.85 & 1.17 \\ 
       \hline \hline
    \end{tabular}
\end{table}

\begin{table}[h]
    \centering    
    \caption{Computational results on {\bf rcv1.test}}
    \label{tab:rcv1-test}                                      
    \begin{tabular}{cccccccc}   
        \hline                          
        Tol. & & SpaRSA & FISTA & AltN & IRPN & newG & TMAP \\ 
        \hline \hline      
        \multirow{3}{*}{$10^{-6}$}& outer iter. & - & - & - & 10 & 28 & - \\                  
        & inner iter. & 1645 & 1345 & 183 & 706 & 759 & 27 \\       
        & time & 224.33 & 200.49 & 99.04 & 86.48 & 93.58 & 20.30 \\
       \hline \hline
       \multirow{3}{*}{$10^{-8}$}& outer iter. & - & - & - & 18 & 45 & - \\                     
        & inner iter. & 10000 & 5271 & 273 & 1506 & 1609 & 34 \\      
        & time & 1343.19 & 772.10 & 242.96 & 179.12 & 191.88 & 52.60 \\
       \hline \hline
       \multirow{3}{*}{$10^{-10}$}& outer iter. & - & - & - & 28 & 57 & - \\
        & inner iter. & 10000 & 7885 & 289 & 2506 & 2312 & 37 \\
        & time & 1343.19 & 1169.59 & 257.45 & 315.32 & 286.32 & 73.28 \\
       \hline \hline
    \end{tabular}
\end{table}

\begin{figure}[ht]
    \centering
    \subfigure[\textbf{rcv1.train}]{
        \includegraphics[width=0.45\linewidth]{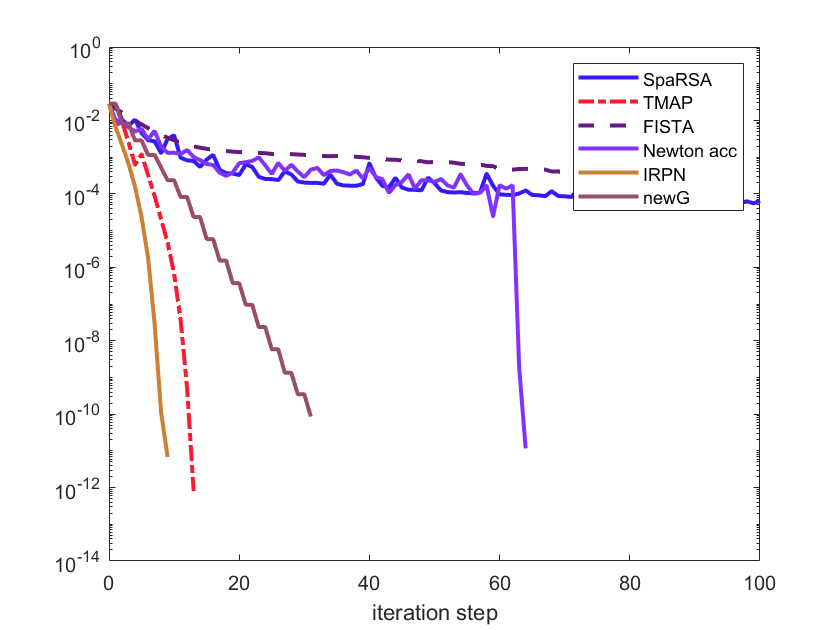}
        \label{rcv1.train}
    }
    \subfigure[\textbf{news20}]{
        \includegraphics[width=0.45\linewidth]{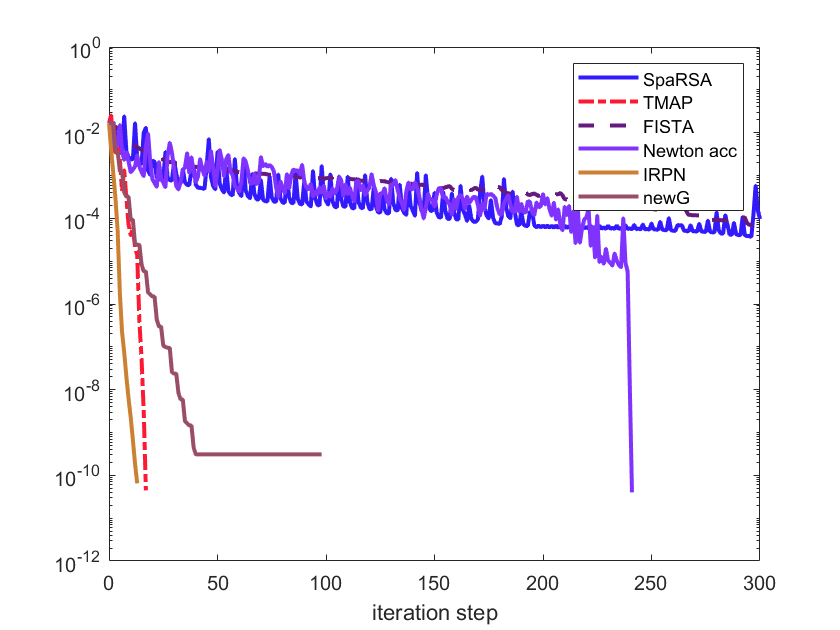}
        \label{news20}
    }
    \subfigure[\textbf{real-sim}]{
        \includegraphics[width=0.45\linewidth]{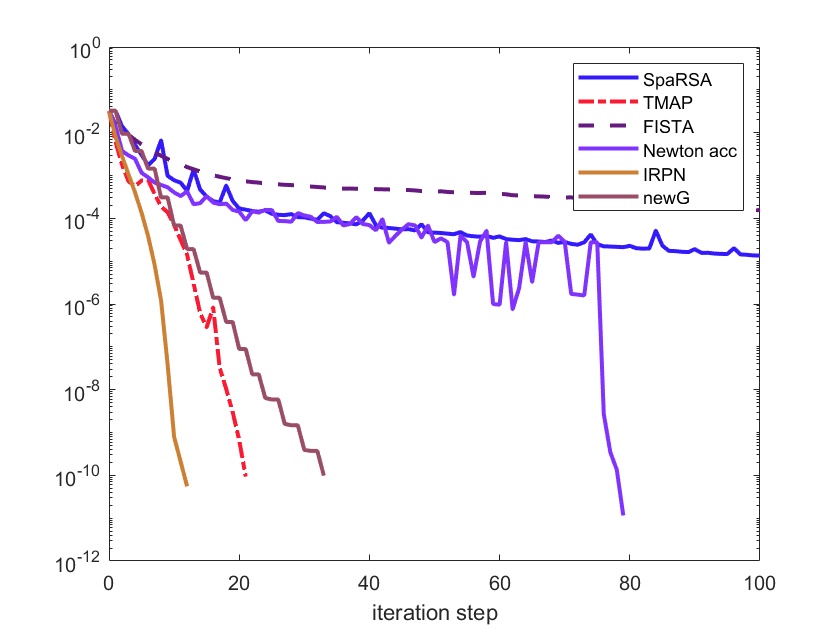}
        \label{real-sim}
    }
    \subfigure[\textbf{rcv1.test}]{
        \includegraphics[width=0.45\linewidth]{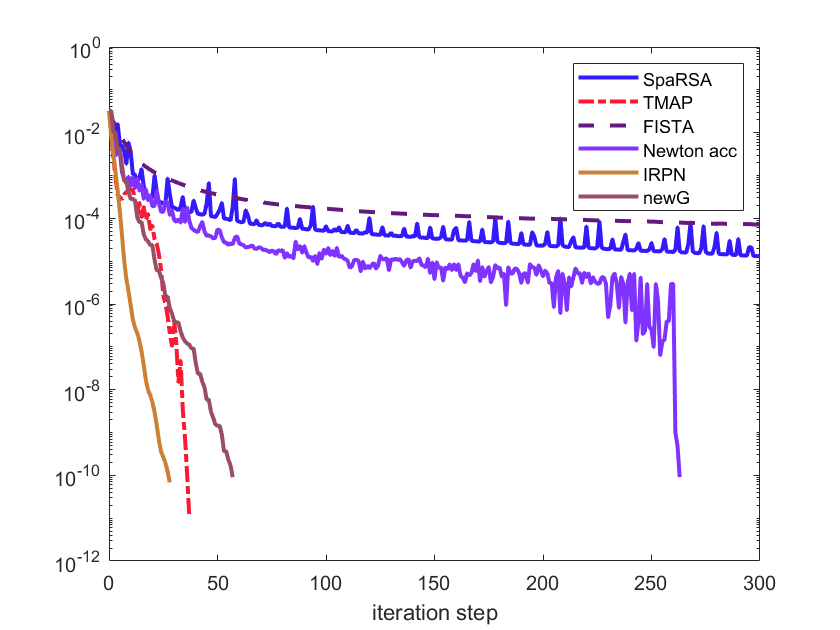}
        \label{rcv1.test}
    }
    \caption{Convergence behavior of the sequence of $\{\|x_k - \operatorname{prox}_h(x_k - \nabla f(x_k))\|\}_{k\geq 0}$}
    \label{fig:convergence}
\end{figure}

From these results, we observe that our proposed \textbf{TMAP} method outperforms all other algorithms on these large-scale data sets across all considered accuracy levels in terms of CPU time. 
Notably, \textbf{AltN} requires more iterations and a longer initial phase to reach a high-accuracy solution, primarily because its proximal gradient step is less efficient at identifying the active manifold and locating a suboptimal point at the early stages. Once the active manifold is correctly identified, \textbf{AltN} enjoys rapid local convergence (can be observed in Figure~\ref{fig:convergence}).
Compared to \textbf{AltN}, \textbf{IRPN} and \textbf{newGLMNET} perform slightly better or comparable in the early stage of optimization, but their runtimes grow more rapidly as the required tolerance becomes higher, so they ultimately require more time to reach highly accurate solutions.

From the above discussion, we see that the key to the fast convergence of \textbf{TMAP} and \textbf{AltN} in practice is that both methods explicitly exploit the manifold identification property in their design. Once the active manifold is identified, the algorithms locally reduce to a Newton method restricted to this manifold, which, thanks to sparsity, results in a low computational cost.

\subsection{Numerical results on the LASSO problem}
We consider the LASSO problem as follows:
\begin{equation}\label{lasso}
    \min _{x \in \mathbb{R}^{n}}\frac{1}{2} \|Ax-b\|_{2}^{2}+\gamma\|x\|_{1} .
\end{equation}
Here, $A \in \mathbb{R}^{m \times n}$ and $b \in \mathbb{R}^m$ are given data; $\gamma > 0$ is a given regularization parameter.
Closely related to the LASSO problem is the Basis Pursuit problem which is a convex optimization problem and seeks the sparsest solution to an underdetermined linear system of equations. The Basis Pursuit (BP) problem can be formulated as
\begin{equation*}
    \min _{x \in \mathbb{R}^{n}}\|x\|_{1} \quad \text { s.t. } \quad Ax=b.
\end{equation*}
The theory for penalty functions implies that the solution of the LASSO problem goes to the solution of BP problem as $\gamma$ goes to zero. 
Numerical experiments suggest that in practice, most S-sparse signals are in fact recovered exactly via BP once $m \geq 4S$ or so and Fourier measurements are used \citep{candes2006robust}.

In our experiment, we compare our algorithm with other state-of-the-art algorithms on the large-scale reconstruction/LASSO problem \eqref{lasso}.

The problem setting is from \citep{wright2009sparse,de2016fast}.
The matrix $A \in \mathbb{R}^{m \times n}$ is a random matrix with $n \in \{2^{14}, 2^{15}, 2^{16}\}$ and $m \triangleq n/4$, with i.i.d Gaussian entries of zero mean and variance $\frac{1}{2n}$, i.e., $A_{i j} \sim \mathcal{N}\left(0, \frac{1}{2 n}\right)$.
The true signal $\bar{x} \in \mathbb{R}^{n}$ is generated with $k$ nonzero entries, where $k \triangleq \left\lfloor \rho m \right\rfloor$ and $\rho \in \{0.1,0.05,0.01\}$. The $k$ different indices are uniformly chosen from $\{1,2, \ldots ,n\}$ and the magnitude of each nonzero element is uniformly randomly chosen from $\{-1,1\}$. We choose $b \triangleq A\bar{x} + \epsilon$, where components of $\epsilon$ are i.i.d Gaussian
noises with a standard deviation $\bar{\sigma} \triangleq 0.01$. The regularization parameter is set to $\gamma \triangleq 0.1\|A^\top b\|_{\infty}$.

Our tested algorithms include \textbf{SpaRSA} \citep{wright2009sparse}, \textbf{FAST-2CDA-E} \citep{de2016fast}, \textbf{ASSN} \citep{xiao2018regularized}, \textbf{ProjectionL1} \citep{schmidt2007fast} and \textbf{PSSgb} \citep{schmidt2010graphical}.
The settings and review of these algorithms are as follows:
\begin{enumerate} \fontsize{11}{15}\selectfont
\item {\bf SpaRSA} {This is 
the separable approximation algorithm proposed in \citep{wright2009sparse}. } We use the same setting as in \citep{wright2009sparse}. This method is famous and efficient for reconstruction problems among first-order methods, and it has been widely used as a benchmark in the literature.
\item {\bf FAST-2CDA-E\footnote{\url{https://sites.google.com/site/activesetlibrary/asm-active-set-methods-library}}} {This is one of the variants of the Fast Active SeT Block Coordinate Descent Algorithm (\textbf{FAST-BCDA}) proposed in \citep{de2016fast}. } We use the same setting as in \citep{de2016fast}. Also note that \textbf{FAST-2CDA-E} performs the best among the four variants of \textbf{FAST-BCDA}.
\item {\bf ASSN\footnote{\url{https://github.com/optsuite/ssm-Newton-l1}}} This is the semismooth Newton method proposed in \citep{xiao2018regularized}. It is highly efficient for solving reconstruction problems whose data are generated by the discrete cosine transform.
We follow the same setting as \citep{xiao2018regularized}, except that we adopt a new warm-start strategy (adaptive continuation) suggested by \citep{wright2009sparse}.
\item {\bf ProjectionL1\footnote{\url{https://www.cs.ubc.ca/~schmidtm/Software/L1General.html}} (TMP)} This is the two-metric projection method proposed by Bertsekas \citep{bertsekas1982projected}. This method was used by \citep{schmidt2007fast} to solve the equivalent formulation~\eqref{eq-bound}, and we adopt the same setting as in \citep{schmidt2007fast}. \textbf{ProjectionL1} approximates the Hessian via L-BFGS, unlike the original two-metric projection method.
\item {\bf PSSgb\footnote{\url{https://www.cs.ubc.ca/~schmidtm/Software/L1General.html}}} This is the improved version of \textbf{ProjectionL1} proposed in \citep{schmidt2010graphical}. It directly solves the Problem~\ref{l1min} without reformulation.
We use the same setting as in \citep{schmidt2010graphical} except that the parameter \texttt{options.quadraticInit} is set to 1. The Hessian matrix is approximated by L-BFGS in \textbf{PSSgb}.
\item {\bf TMAP} This is Algorithm~\ref{2m-proj} we proposed in this paper. We set $c = 0.1$, $\beta = 0.2 $, $\sigma = \tau = 0.1$, $\epsilon = 10^{-3}$ and $\delta = 0.7$.  We start all linesearch with step size $t = 1$. We also use a warm-start strategy (Adaptive Continuation) as suggested by \citep{wright2009sparse}.
\end{enumerate}

We initialize all the tested algorithms by the same point $x_0 = 0$. All the algorithms are terminated if the iterate $x_k$ satisfies $$ \| x_k - \operatorname{prox}_{ h} (x_k - \nabla f(x_k)) \| \leq \texttt{tol},$$
where $\texttt{tol} \in \{10^{-6}, 10^{-8}, 10^{-10}\}$ or the maximum number of iterations (500) is reached.
In warm-start strategy (Adaptive Continuation), we start from a large value of $\gamma_0 = \zeta\|A^\top b\|_{\infty}$ and solve a sequence of LASSO problems with decreasing regularization parameters $\gamma_{i+1} = \max\{\zeta\|A^\top (b - A y_i)\|_{\infty},\gamma\}$ until we reach the target value $\gamma$. 
$y_i$ is the solution of the previous problem, which is used as the initial point for the next problem. 
As suggested by \citep{wright2009sparse}, we set $\zeta = 0.2$, i.e., $\gamma_0 = 2 \gamma$.
We run each algorithm on 100 randomly generated problem instances for each pair of $n, \rho$ and present box plots showing the CPU time distribution (in seconds) in Figures~\ref{fig:lasso-1e-6}, \ref{fig:lasso-1e-8}, and \ref{fig:lasso-1e-10} for different accuracy levels.
In each box, the central mark is the median, the edges of the box are the 25th and 75th percentiles, the whiskers extend to the most extreme data points except for outliers, and outliers are plotted individually. 

\begin{figure}[htbp]
    \centering
    \subfigure[\textbf{$n = 2^{14}, \rho = 0.01$}]{
        \includegraphics[width=0.47\linewidth]{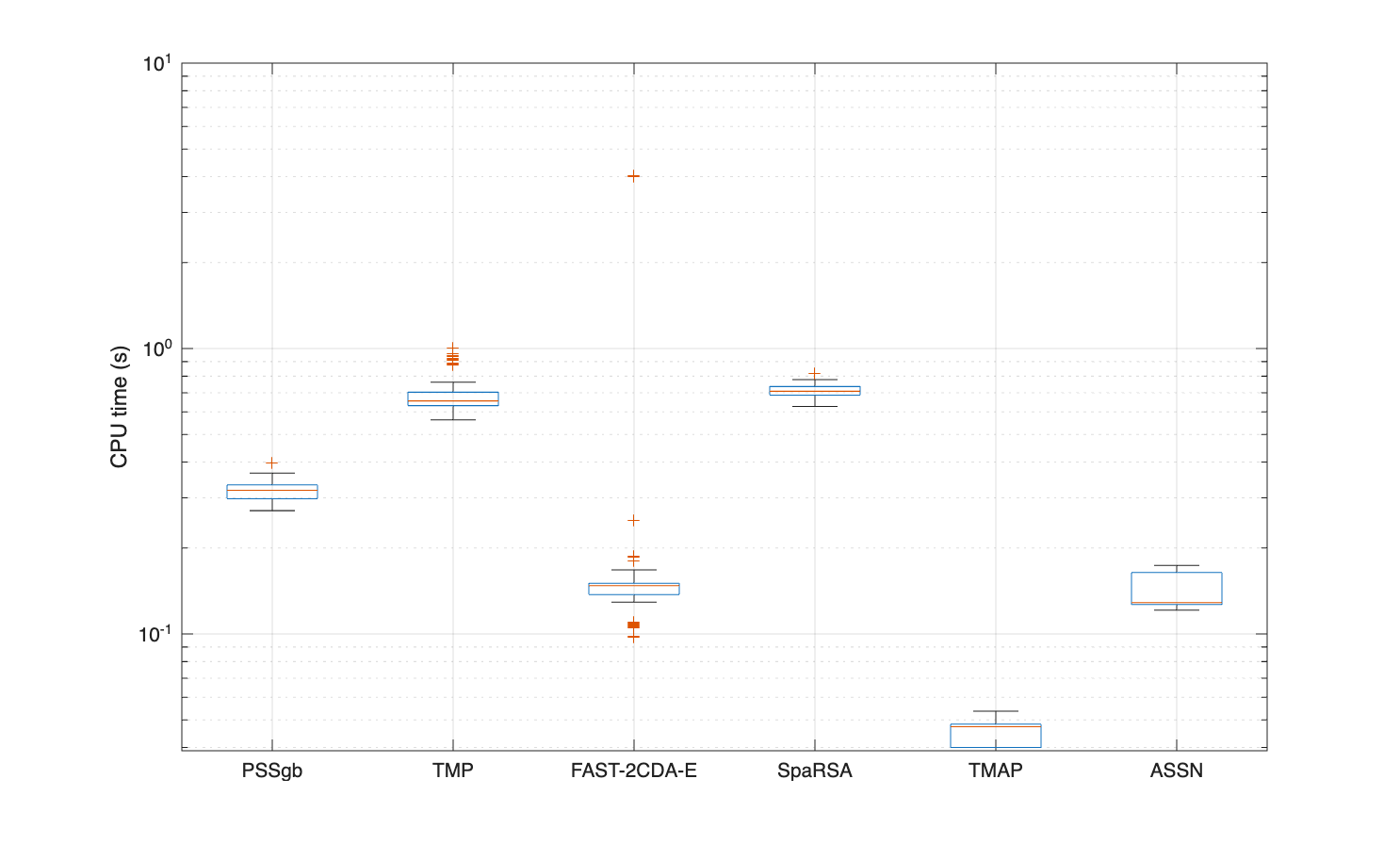}
        \label{2^14-0.01-1e-6}
    }
    \subfigure[\textbf{$n = 2^{14}, \rho = 0.05$}]{
        \includegraphics[width=0.47\linewidth]{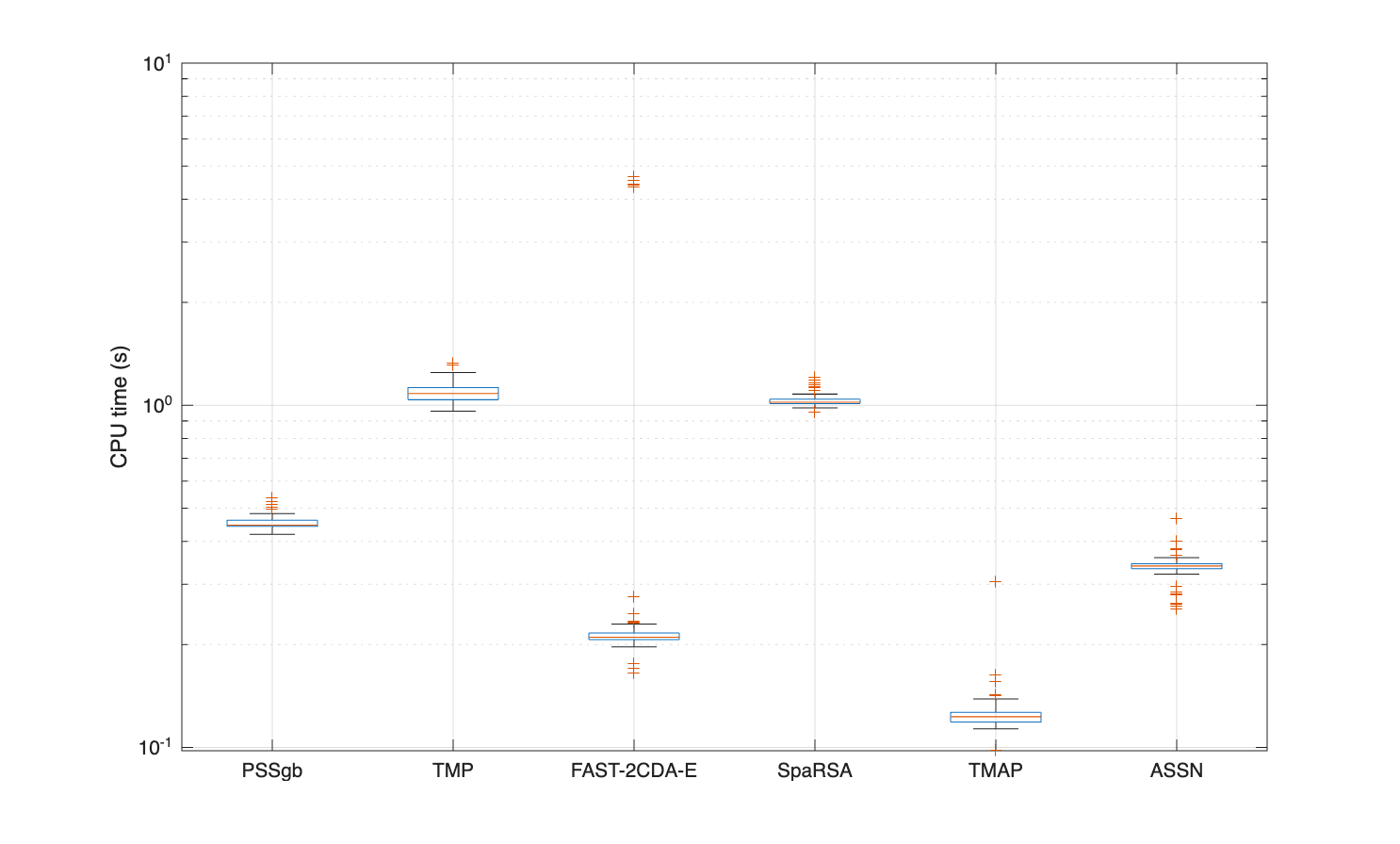}
        \label{2^14-0.05-1e-6}
    }
    \subfigure[\textbf{$n = 2^{14}, \rho = 0.1$}]{
        \includegraphics[width=0.47\linewidth]{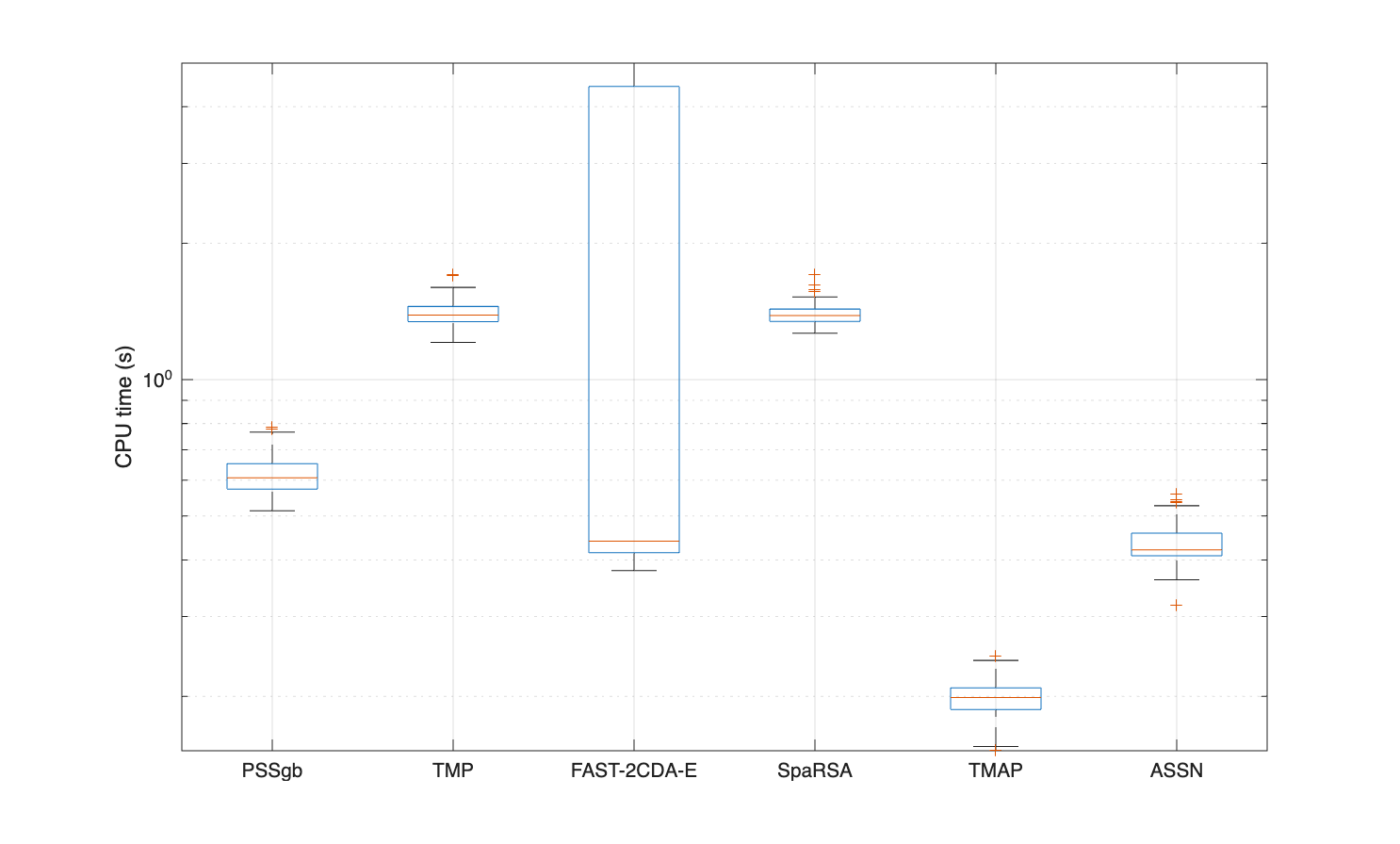}
        \label{2^14-0.1-1e-6}
    }
    \subfigure[\textbf{$n = 2^{15}, \rho = 0.01$}]{
        \includegraphics[width=0.47\linewidth]{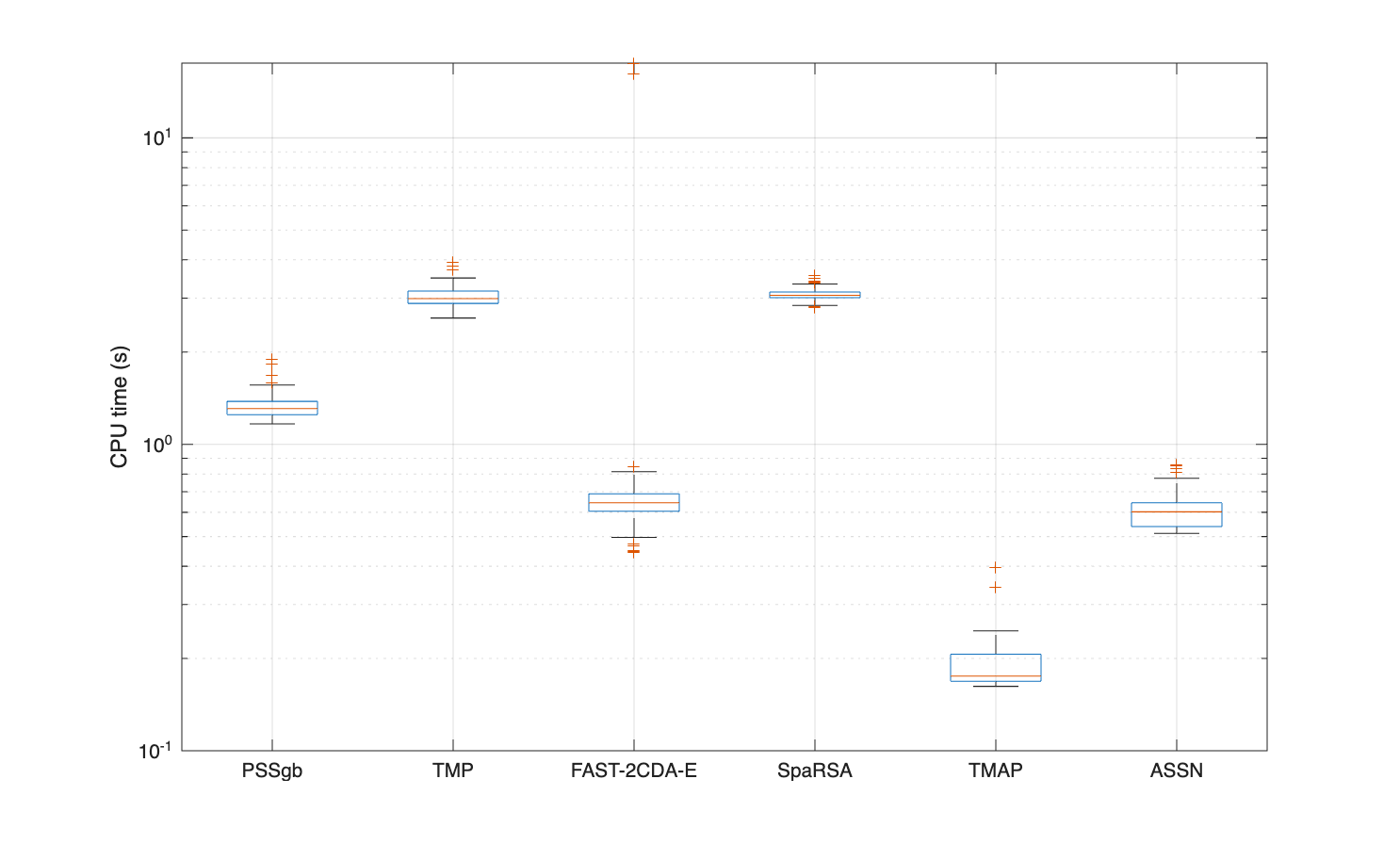}
        \label{2^15-0.01-1e-6}
    }
    \subfigure[\textbf{$n = 2^{15}, \rho = 0.05$}]{
        \includegraphics[width=0.47\linewidth]{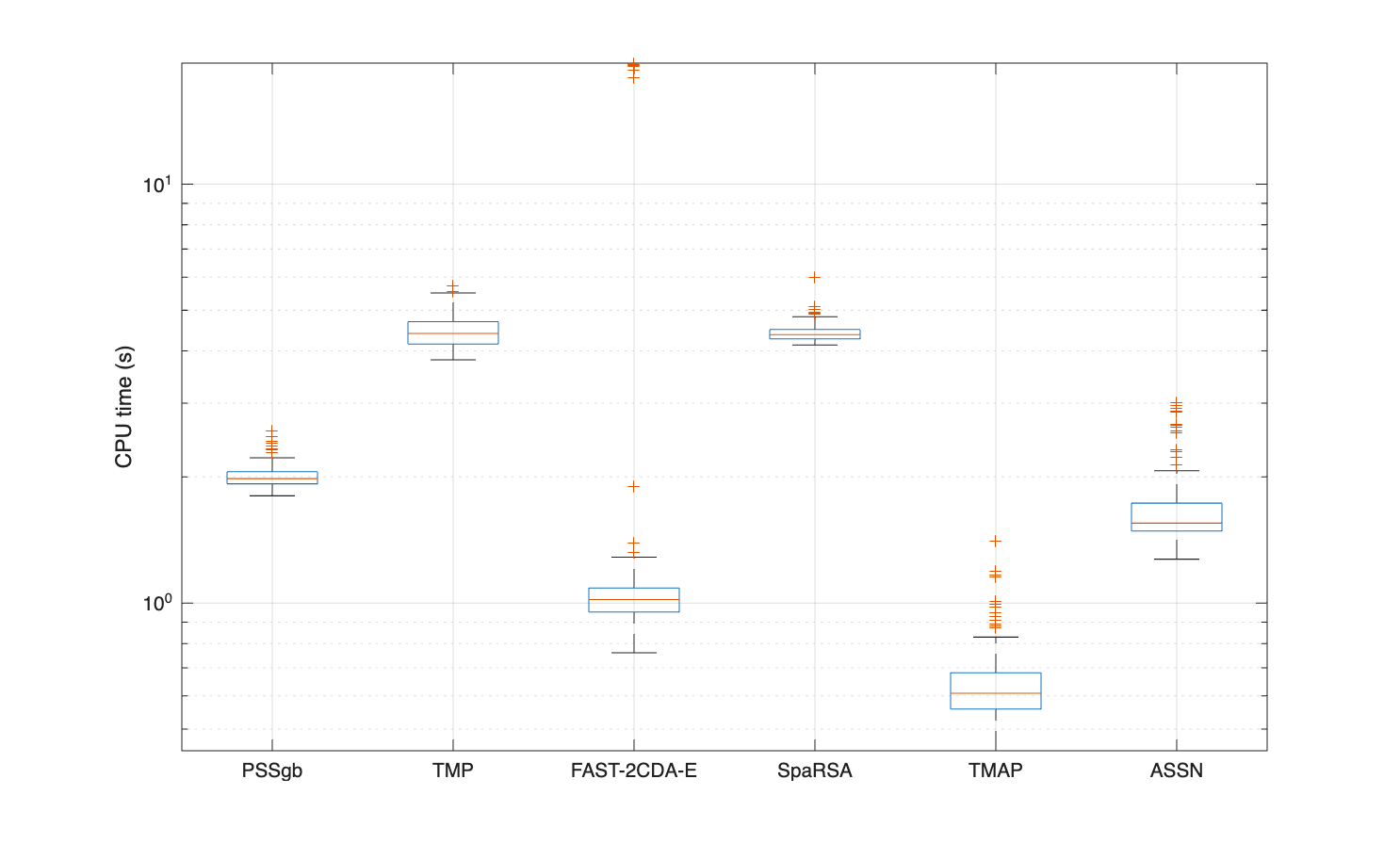}
        \label{2^15-0.05-1e-6}
    }
    \subfigure[\textbf{$n = 2^{15}, \rho = 0.1$}]{
        \includegraphics[width=0.47\linewidth]{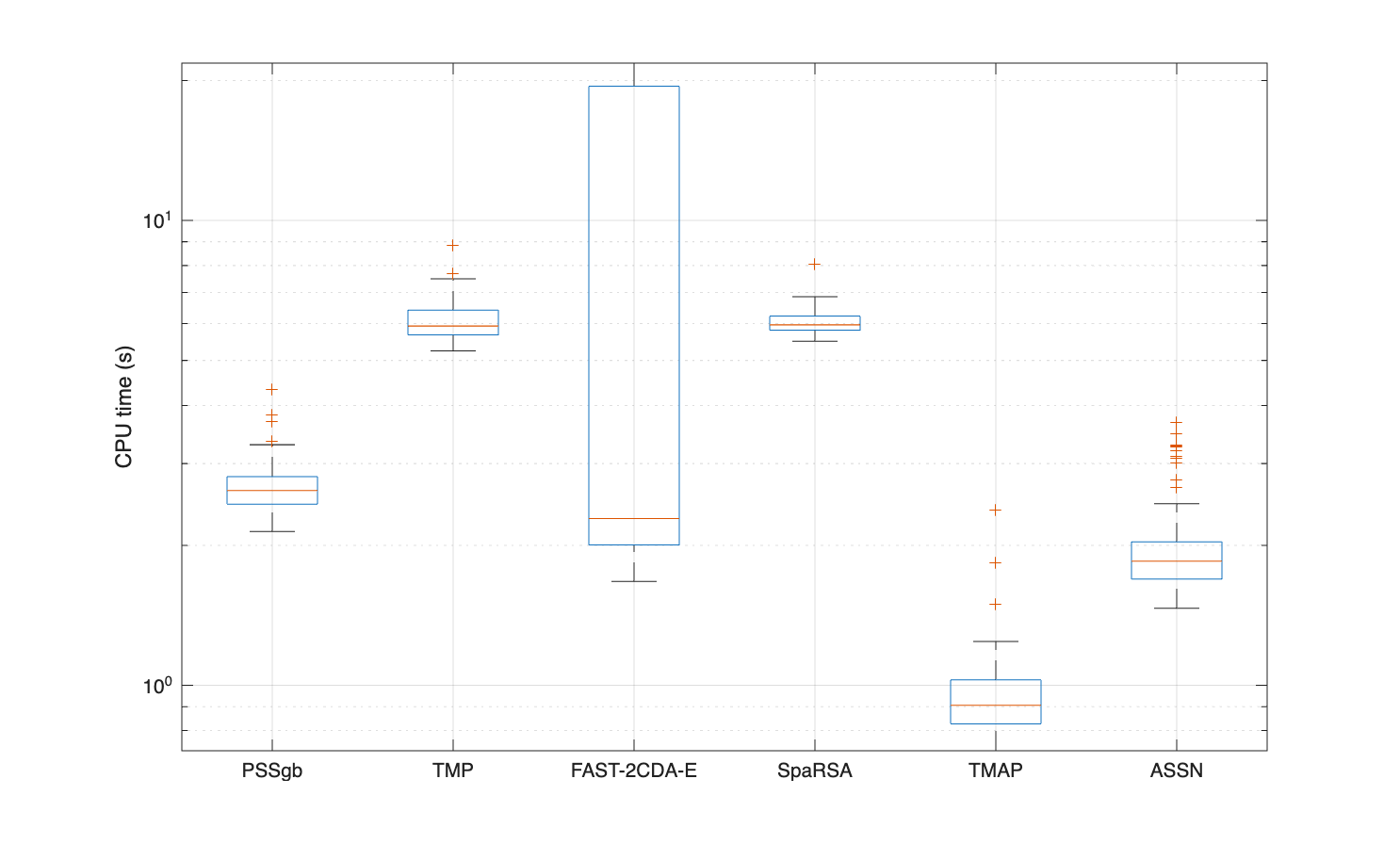}
        \label{2^15-0.1-1e-6}
    }
    \caption{\texttt{tol} = $10^{-6}$}
    \label{fig:lasso-1e-6}
\end{figure}

\begin{figure}[ht]
    \centering
    \subfigure[\textbf{$n = 2^{14}, \rho = 0.01$}]{
        \includegraphics[width=0.47\linewidth]{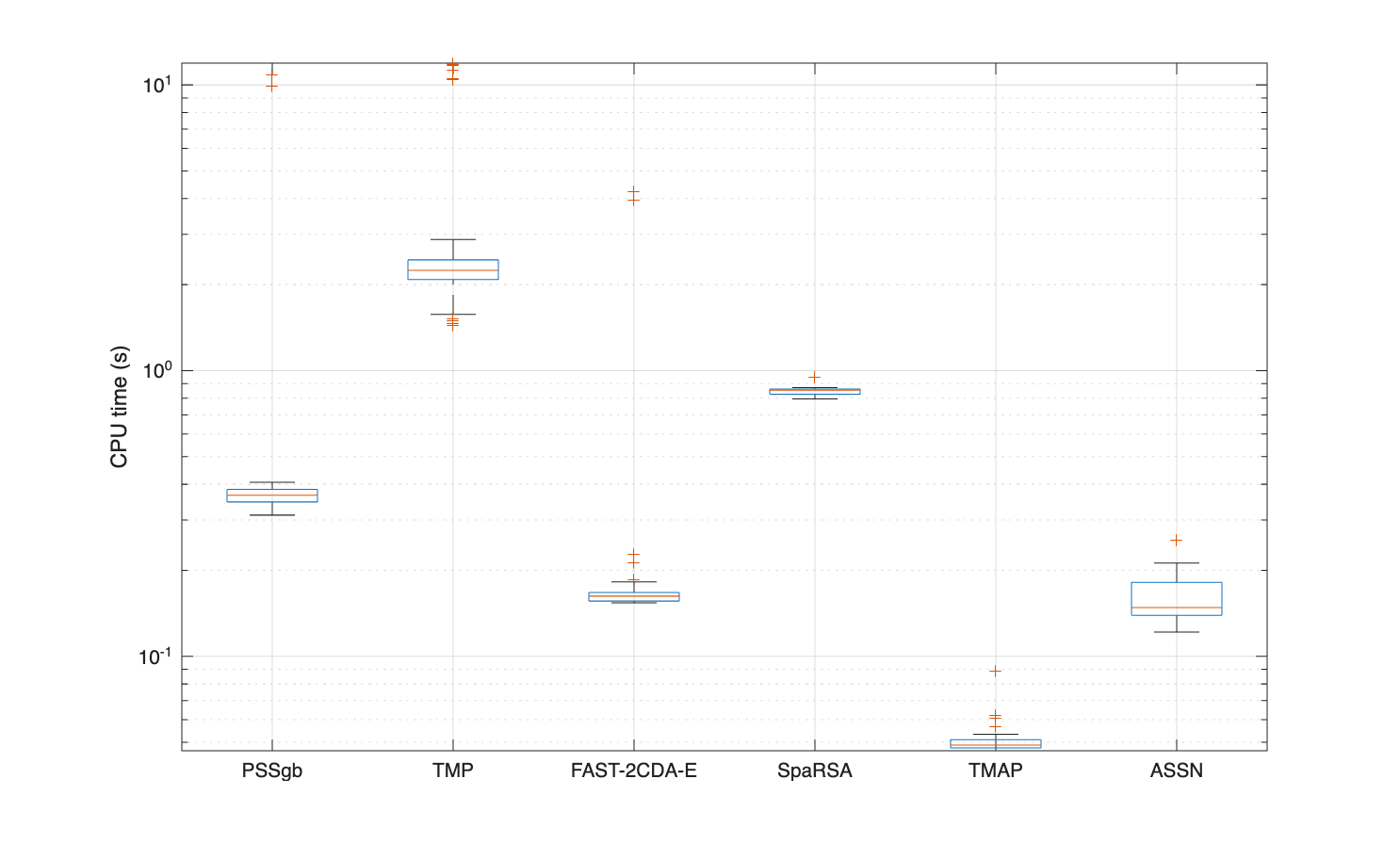}
        \label{2^14-0.01-1e-8}
    }
    \subfigure[\textbf{$n = 2^{14}, \rho = 0.05$}]{
        \includegraphics[width=0.47\linewidth]{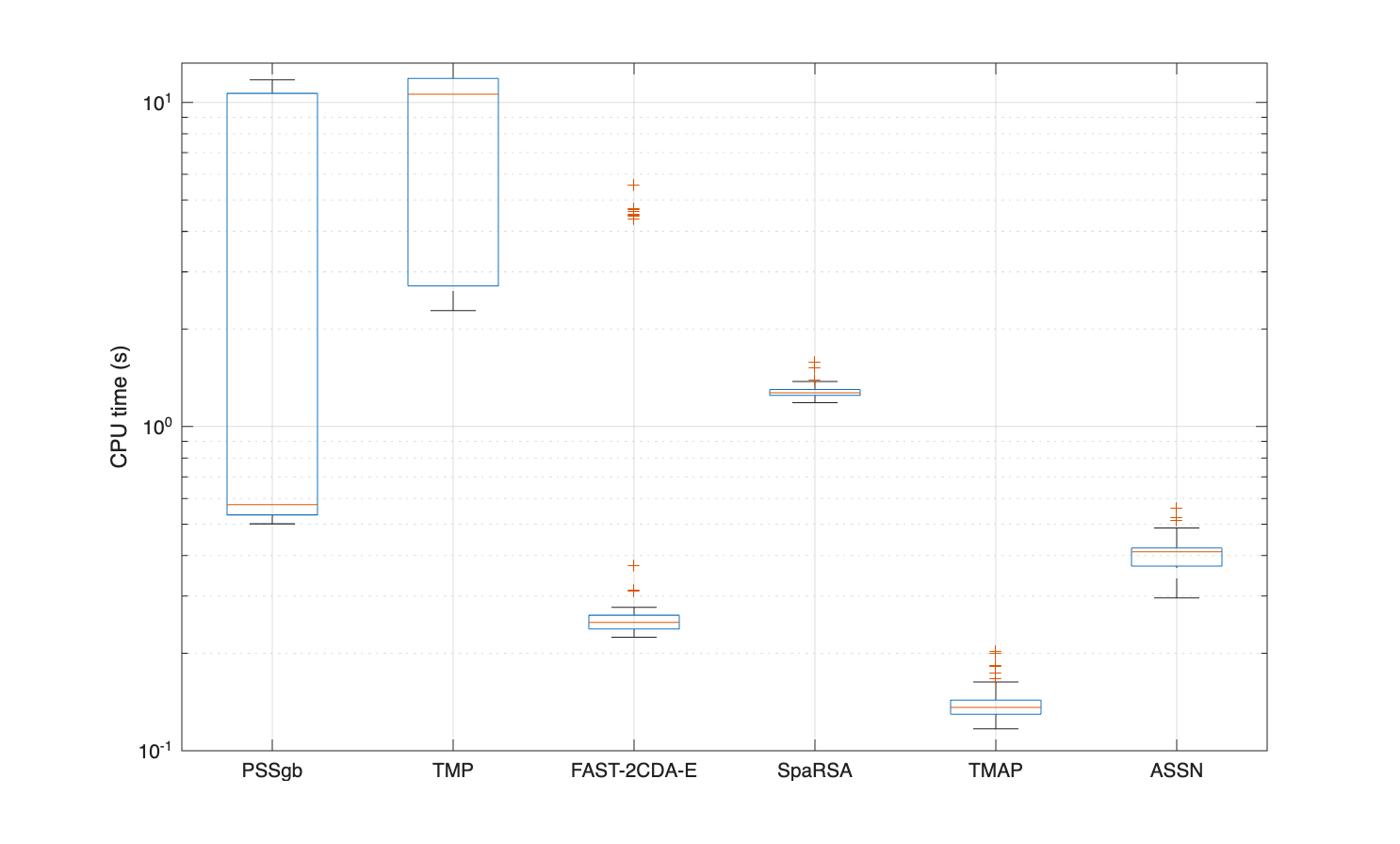}
        \label{2^14-0.05-1e-8}
    }
    \subfigure[\textbf{$n = 2^{14}, \rho = 0.1$}]{
        \includegraphics[width=0.47\linewidth]{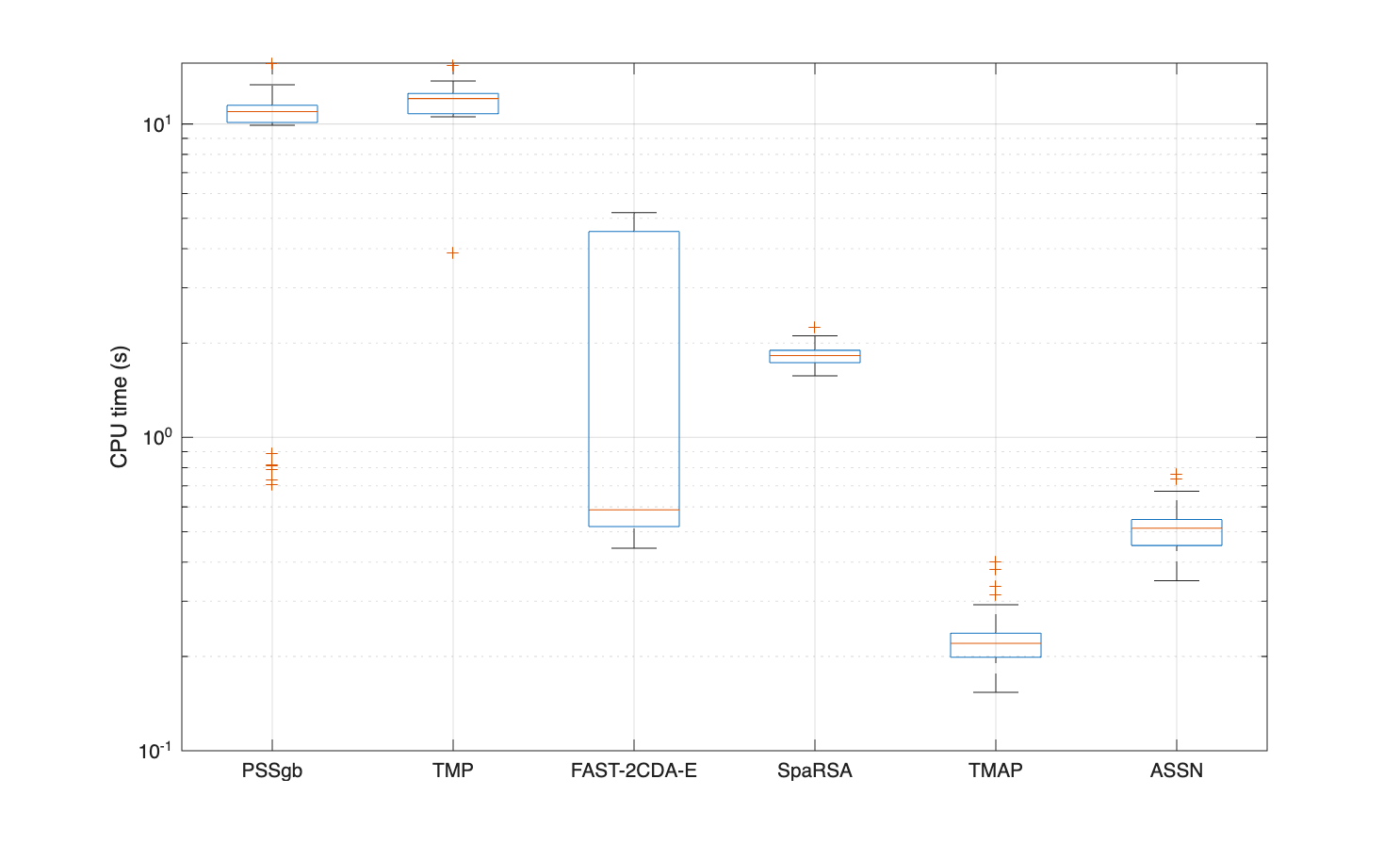}
        \label{2^14-0.1-1e-8}
    }
    \subfigure[\textbf{$n = 2^{15}, \rho = 0.01$}]{
        \includegraphics[width=0.47\linewidth]{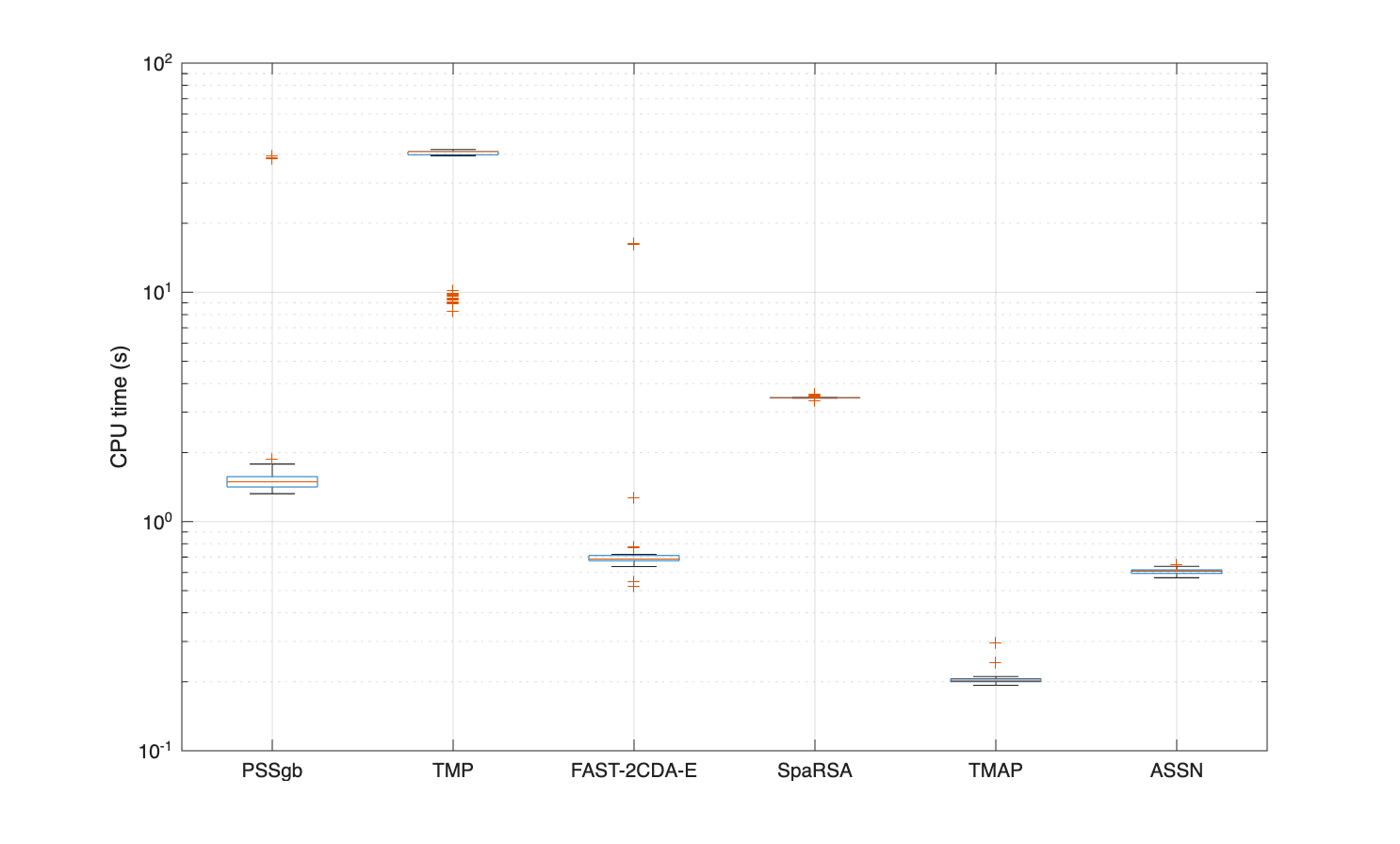}
        \label{2^15-0.01-1e-8}
    }
    \subfigure[\textbf{$n = 2^{15}, \rho = 0.05$}]{
        \includegraphics[width=0.47\linewidth]{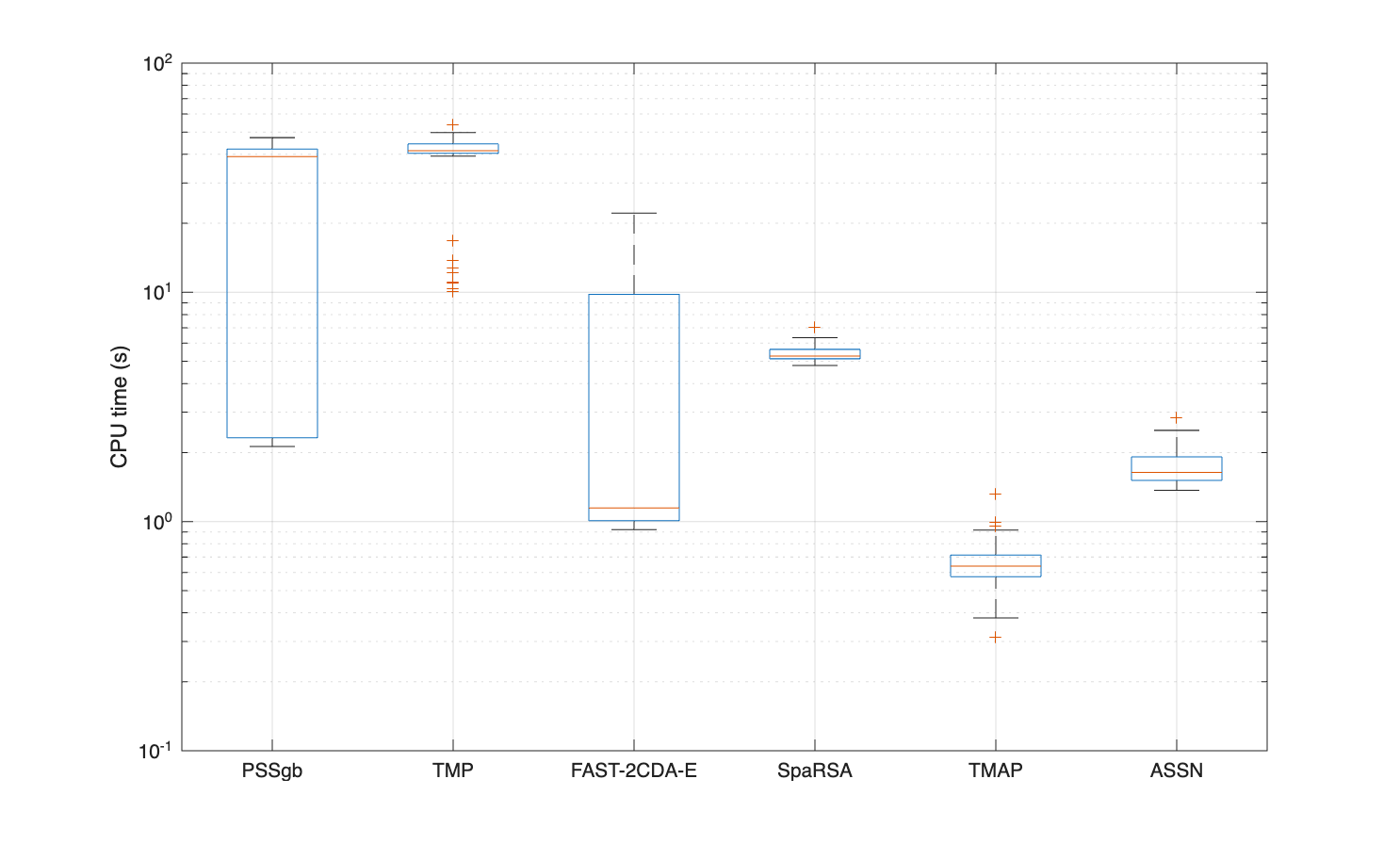}
        \label{2^15-0.05-1e-8}
    }
    \subfigure[\textbf{$n = 2^{15}, \rho = 0.1$}]{
        \includegraphics[width=0.47\linewidth]{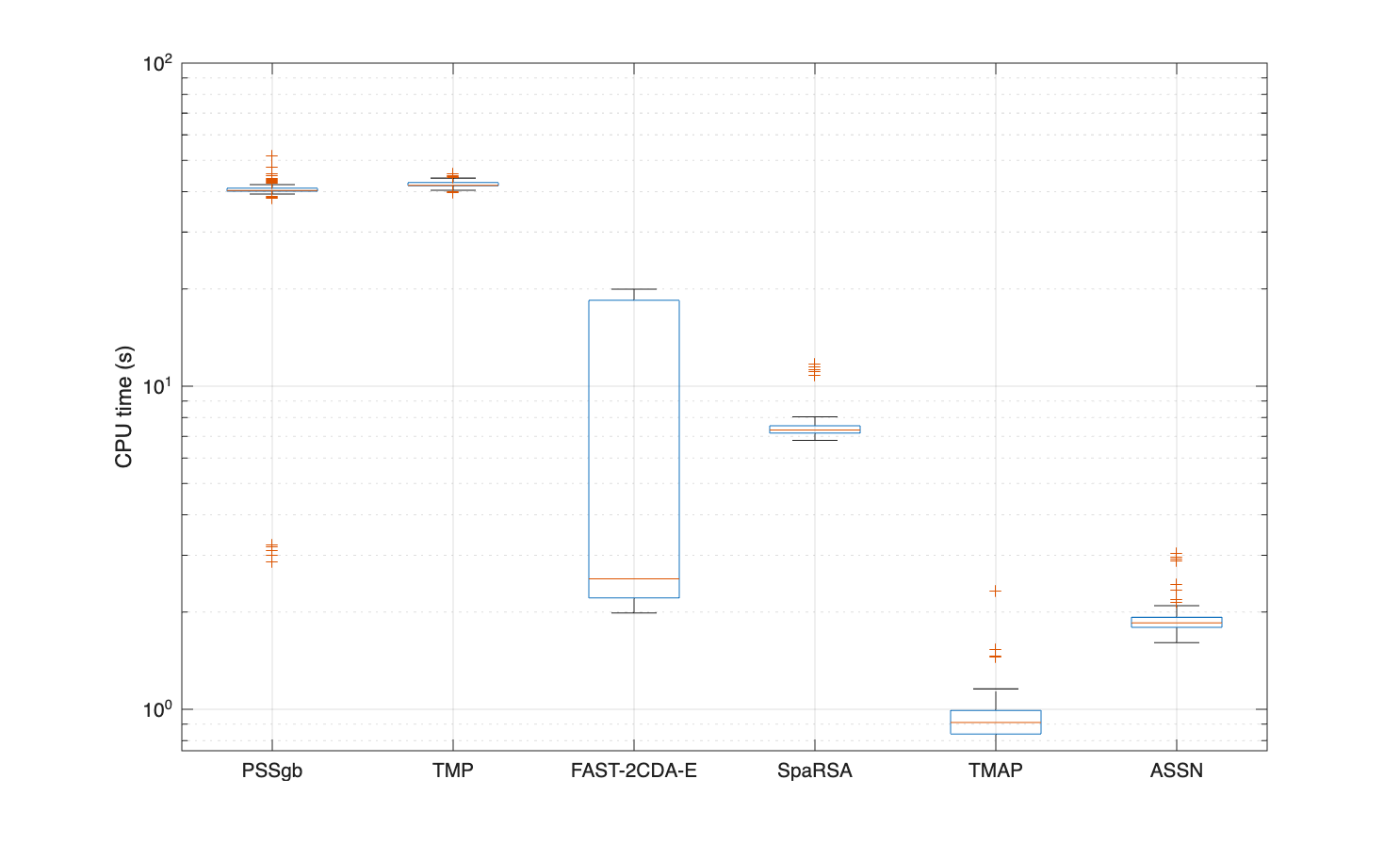}
        \label{2^15-0.1-1e-8}
    }
    \caption{\texttt{tol} = $10^{-8}$}
    \label{fig:lasso-1e-8}
\end{figure}

\begin{figure}[ht]
    \centering
    \subfigure[\textbf{$n = 2^{14}, \rho = 0.01$}]{
        \includegraphics[width=0.47\linewidth]{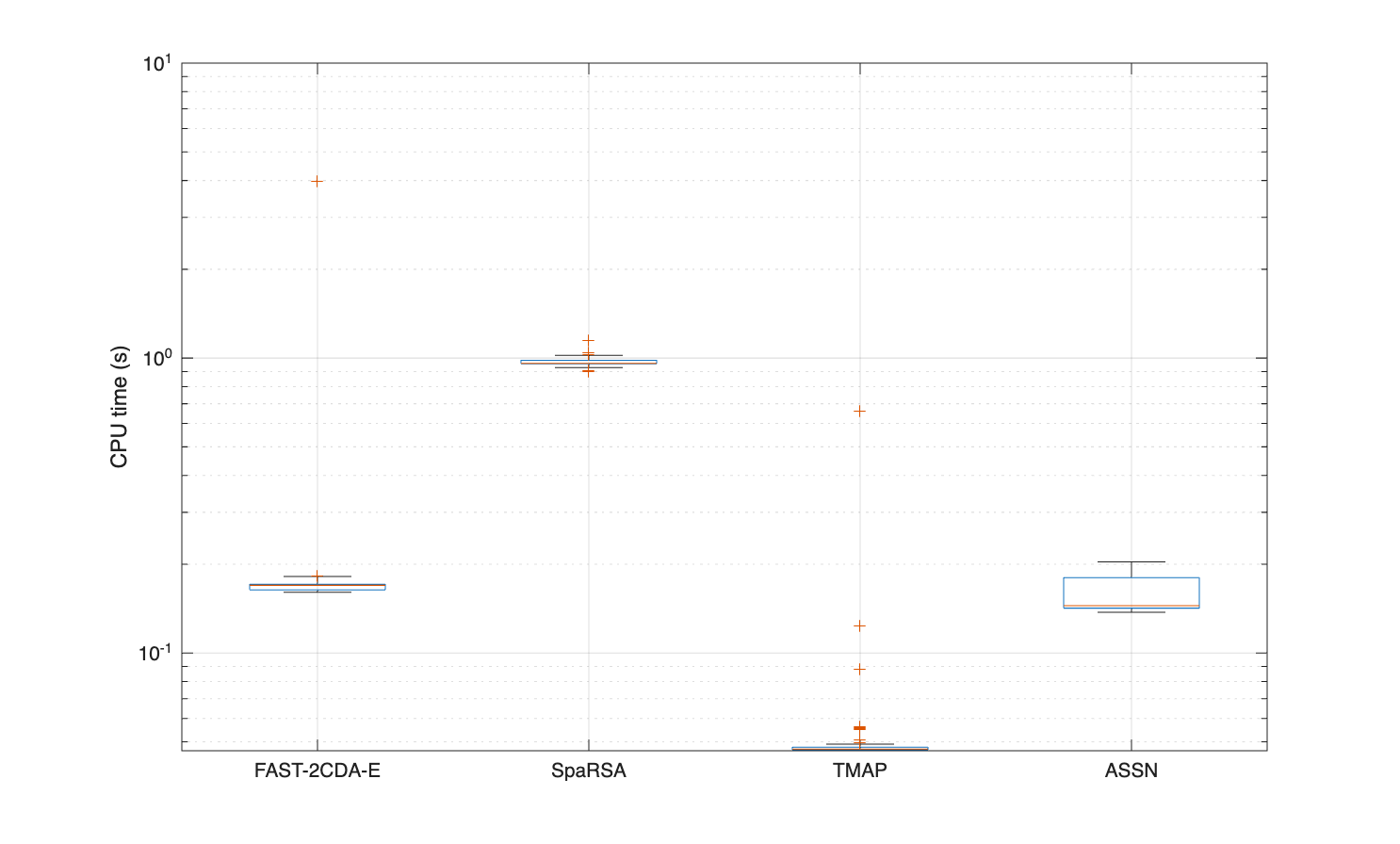}
        \label{2^14-0.01-1e-10}
    }
    \subfigure[\textbf{$n = 2^{14}, \rho = 0.05$}]{
        \includegraphics[width=0.47\linewidth]{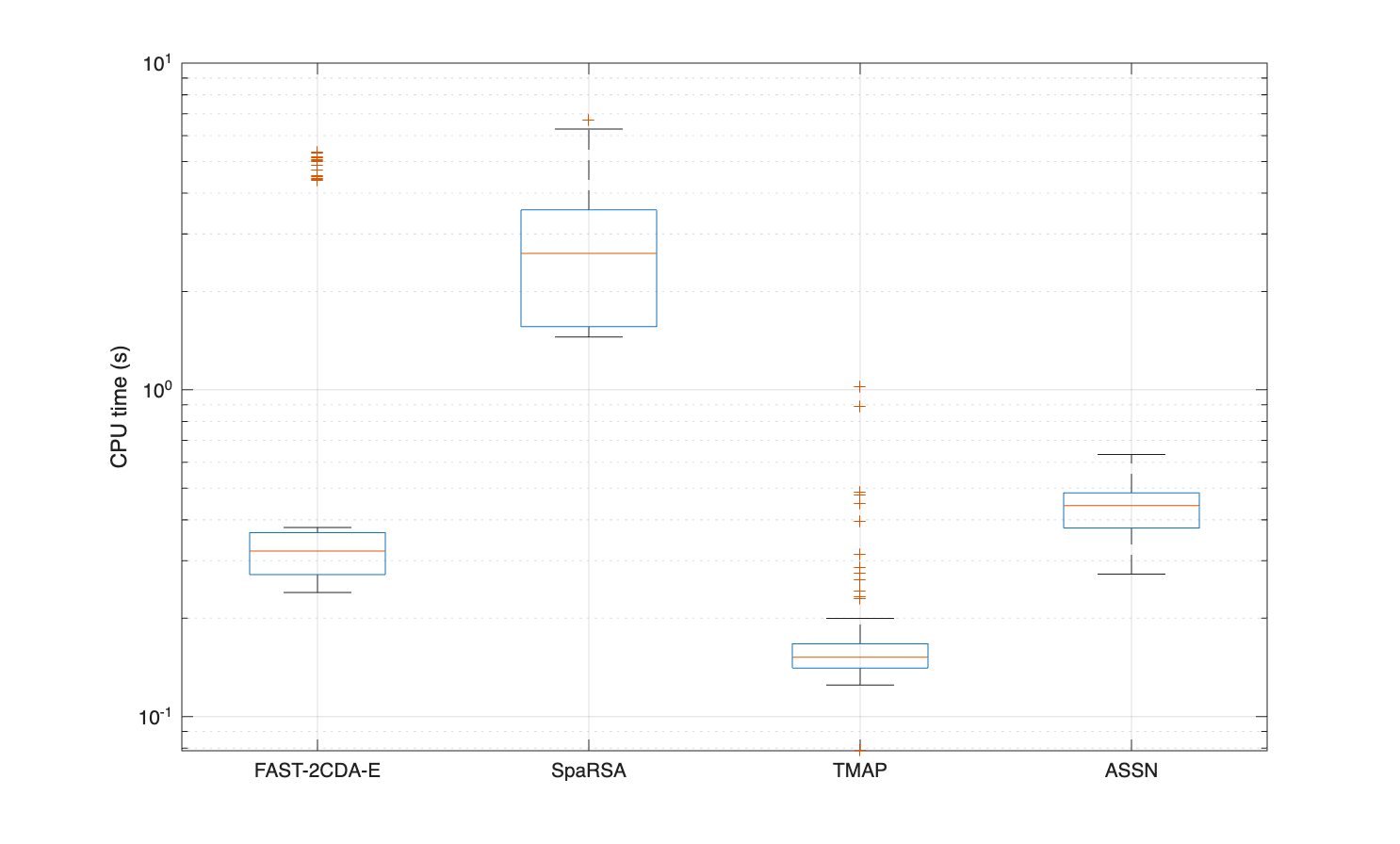}
        \label{2^14-0.05-1e-10}
    }
    \subfigure[\textbf{$n = 2^{14}, \rho = 0.1$}]{
        \includegraphics[width=0.47\linewidth]{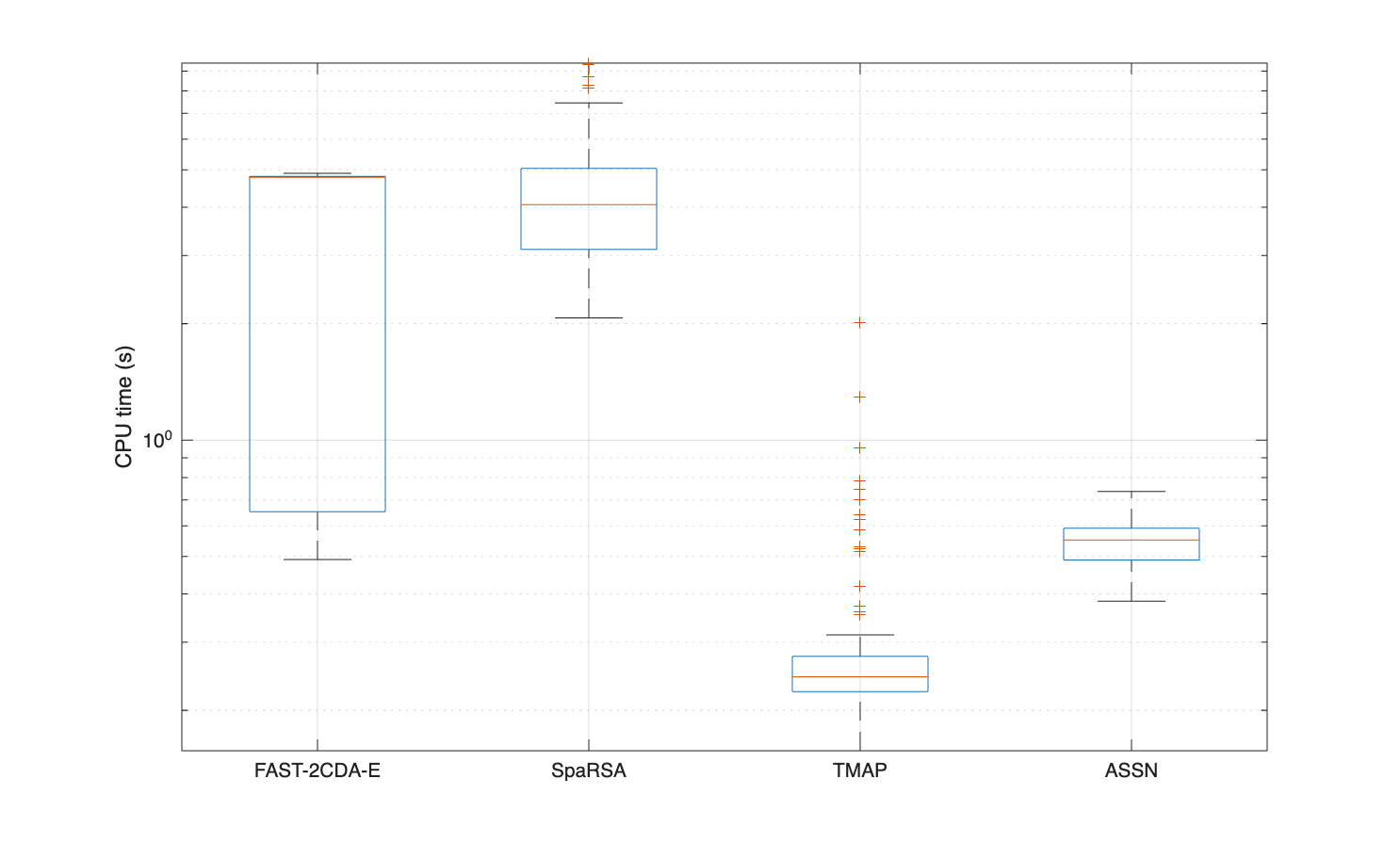}
        \label{2^14-0.1-1e-10}
    }
    \subfigure[\textbf{$n = 2^{15}, \rho = 0.01$}]{
        \includegraphics[width=0.47\linewidth]{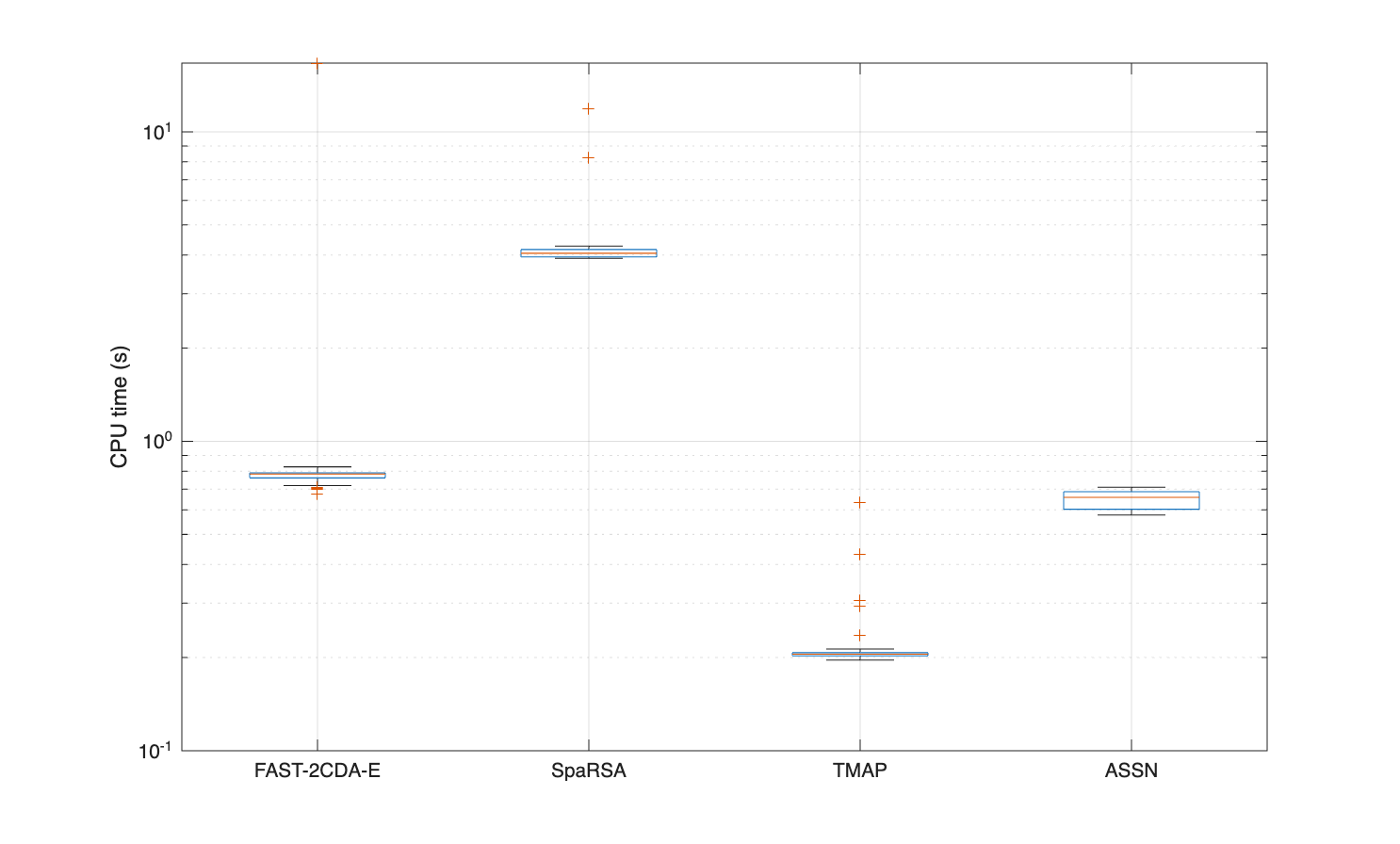}
        \label{2^15-0.01-1e-10}
    }
    \subfigure[\textbf{$n = 2^{15}, \rho = 0.05$}]{
        \includegraphics[width=0.47\linewidth]{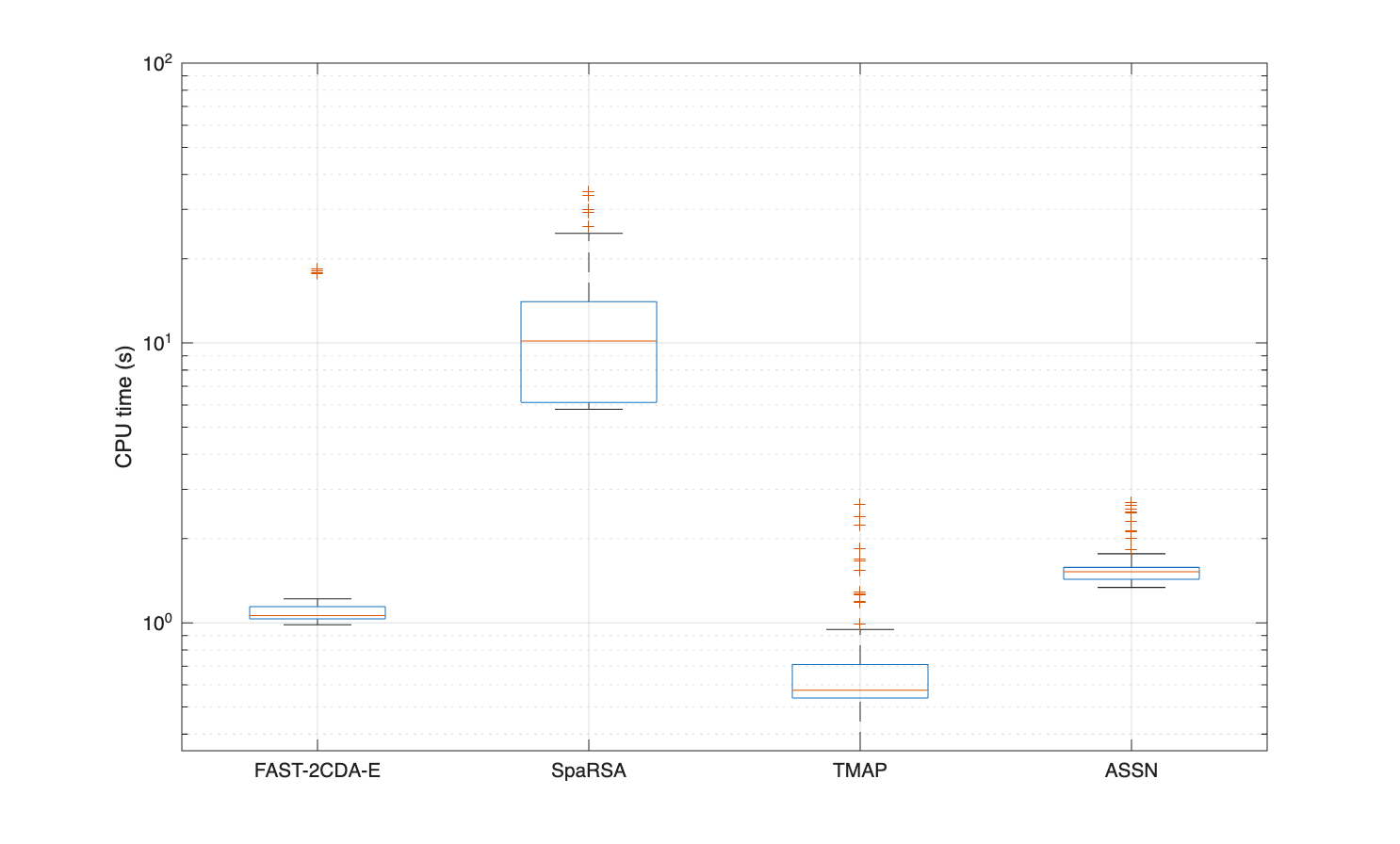}
        \label{2^15-0.05-1e-10}
    }
    \subfigure[\textbf{$n = 2^{15}, \rho = 0.1$}]{
        \includegraphics[width=0.47\linewidth]{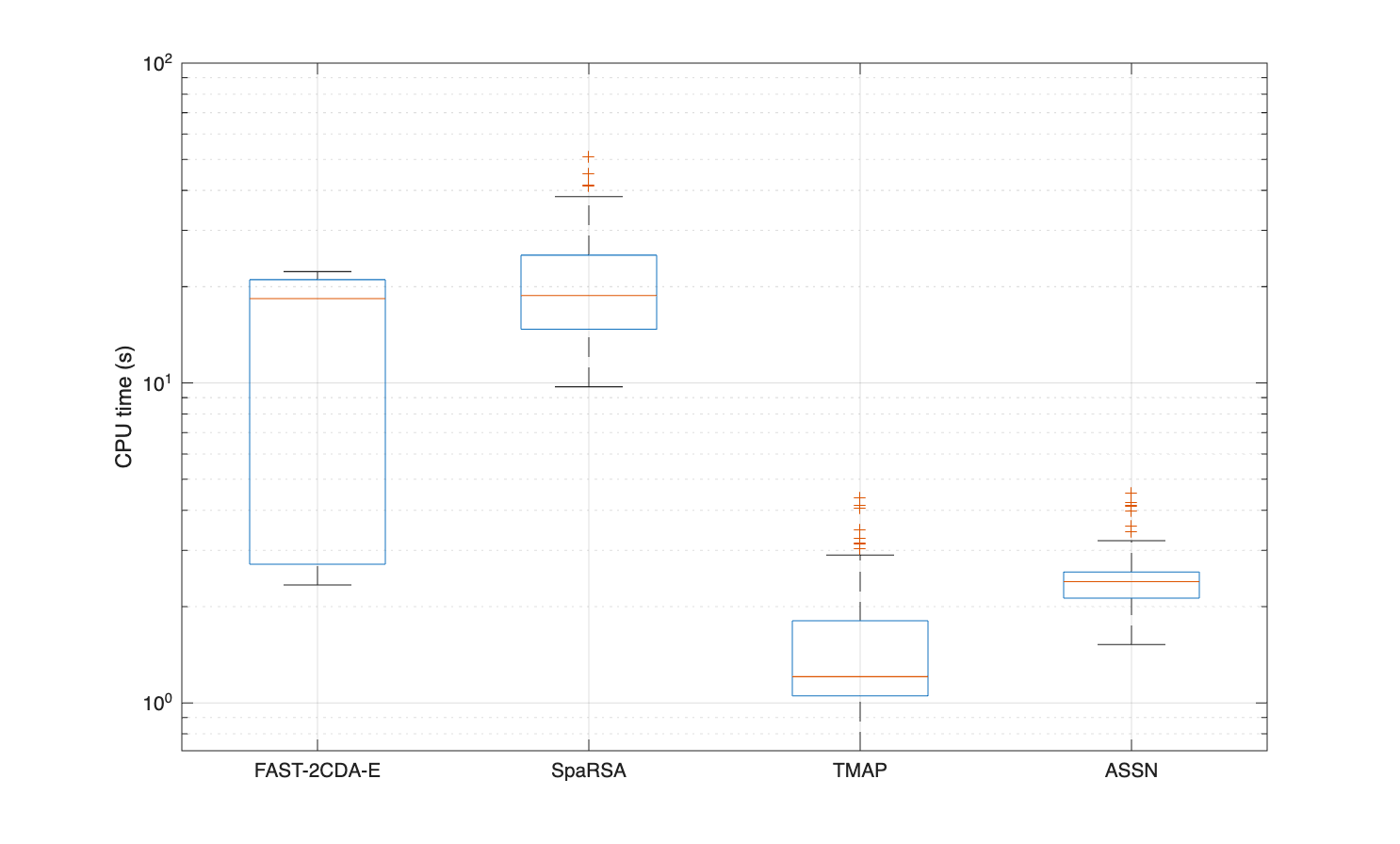}
        \label{2^15-0.1-1e-10}
    }
    \caption{\texttt{tol} = $10^{-10}$}
    \label{fig:lasso-1e-10}
\end{figure}

From Figures~\ref{fig:lasso-1e-6}, \ref{fig:lasso-1e-8}, and \ref{fig:lasso-1e-10}, we observe that \textbf{TMAP} consistently outperforms all competing algorithms in terms of computational time across all problem sizes, sparsity levels, and accuracy levels.
Notably, \textbf{ProjectionL1} and \textbf{PSSgb}—which implement the two-metric projection method and its variant—exhibit significant performance degradation as accuracy levels are raised,  especially on larger problems or {less} sparse signals (larger $\rho$ or $n$). Specifically, \textbf{PSSgb} and \textbf{ProjectionL1} require at least 3 times, sometimes over 10 times the computational time of \textbf{TMAP},  to achieve a tolerance of $10^{-6}$.
We further observe that \textbf{FAST-2CDA-E} shows notable performance degradation as problem size increases or signal sparsity decreases (larger $\rho$ or $n$). When applicable, it generally outperforms \textbf{ProjectionL1} and \textbf{PSSgb}, and achieves performance comparable to \textbf{ASSN}. 
Collectively, these results demonstrate that \textbf{TMAP} is robust to variations in problem scale, sparsity structure, and accuracy levels, showcasing superior efficiency and stability when solving large-scale $\ell_1$-regularized reconstruction problems to high precision.

\section{Conclusion}\label{sec: con}

In this paper, we propose a two-metric adaptive projection (TMAP) method for solving the $\ell_1$-norm minimization problem. 
Compared with the two-metric projection method \cite{bertsekas1982projected} and the ProjectionL1 method \cite{schmidt2007fast} for the equivalent bound-constrained formulation, our algorithm directly solves the $\ell_1$-norm minimization problem. 
Our index-set selection strategy helps avoid singularity issues and numerical instabilities that may arise when applying the original two-metric projection method to the equivalent bound-constrained formulation in practice, making our algorithm more robust. 
The proposed \textbf{TMAP} method can be shown to converge globally to an optimal solution and enjoys a manifold identification property, as well as a locally superlinear convergence rate. 
A key ingredient in our analysis is an error bound condition, which is weaker than either the strong convexity of $f$ or the sufficient second-order optimality condition of problem~\eqref{l1min}.

We conduct extensive numerical experiments, comparing our algorithm with several competitive first-order and second-order methods from the literature on large-scale $\ell_1$-regularized logistic regression and LASSO problems. 
The results demonstrate that our algorithm is highly efficient and robust in solving such problems to high accuracy. 
These findings show that \textbf{TMAP} not only has strong theoretical guarantees but also delivers superior numerical performance: maintaining high efficiency and robustness in practice.  {The future directions include extension to nonconvex regularizers such as the $\ell_0$-norm.  Despite nonconvexity,  $\ell_0$ norm is separable and facilitate partition of the index set hence different types of updates across the coordinates.}

\medskip

\noindent{\bf Acknowledgements.} We would like to acknowledge the valuable suggestions provided by Professor Stephen J. Wright, Professor Kim-Chuan Toh and Professor Chingpei Lee when writing this manuscript.  The research of YX is supported by Hong Kong RGC General Research Fund (Project number: 17300824),  Guangdong Province Fundamental and Applied Fundamental Research Regional Joint Fund (Project 2022B1515130009) and HKU start-up fund.

\bibliography{sn-bibliography} 

\newpage
\begin{appendices}

\section{Preliminaries on Riemannian geometry}\label{app: prelim}
Throughout this paper, all smooth manifolds $\mathcal{M}$ are assumed to be embedded in $\mathbb{R}^n$ and we consider the tangent and normal spaces to $\mathcal{M}$ as subspaces of $\mathbb{R}^n$.
A non-empty set $\mathcal{M} \subset \mathbb{R}^{n}$ is a $C^{p}$-smooth embedded submanifold of dimension $d$ if around any $x \in \mathcal{M}$ there exists an open neighborhood $\mathcal{U} \subset \mathbb{R}^{n}$ of $x$ and a $C^{p}$-smooth map $F: \mathcal{U} \rightarrow \mathbb{R}^{n-d}$ such that the Jacobian matrix $\nabla F(x)$ has full rank $n-d$ and $\mathcal{M} \cap \mathcal{U}=F^{-1}(0)$.
The tangent space of $\mathcal{M}$ at $x$ is defined as $T_{x} \mathcal{M}=\operatorname{ker} \nabla F(x)$.
Another definition of the tangent space is that $T_{x} \mathcal{M}$ consists of all tangent vectors to smooth curves on $\mathcal{M}$ passing through $x$, i.e., $\dot{\gamma}(0)$ for all $C^{1}$-smooth curves $\gamma:(-\epsilon, \epsilon) \rightarrow \mathcal{M}$ with $\gamma(0)=x$. The normal space of $\mathcal{M}$ at $x$ is defined as the orthogonal complement of $T_{x} \mathcal{M}$, i.e., $N_{x} \mathcal{M}=\left(T_{x} \mathcal{M}\right)^{\perp}=\operatorname{Range}\left(\nabla F(x)^{T}\right)$.

For a $C^{p}$-smooth manifold $\mathcal{M}$, a function $f: \mathcal{M} \rightarrow \mathbb{R}$ is said to be $C^{p}$-smooth if for each $x \in \mathcal{M}$ there exists an open neighborhood $\mathcal{U} \subset \mathbb{R}^{n}$ of $x$ and a $C^{p}$-smooth function $\tilde{f}: \mathcal{U} \rightarrow \mathbb{R}$ such that $\tilde{f}|_{\mathcal{M} \cap \mathcal{U}}=f|_{\mathcal{M} \cap \mathcal{U}}$. The function $\tilde{f}$ is called a $C^{p}$-smooth extension of $f$ around $x$.

It is convenient to equip each tangent space of the manifold $\mathcal{M}$ with an inner product $\langle\cdot, \cdot\rangle_{x} : T_{x} \mathcal{M} \times T_{x} \mathcal{M} \rightarrow \mathbb{R}$. If this inner product varies smoothly with $x$, then $\mathcal{M}$ is called a Riemannian manifold and the inner product is called a Riemannian metric. When $\mathcal{M}$ is an embedded submanifold of $\mathbb{R}^{n}$, the tangent space $T_{x} \mathcal{M}$ is a subspace of $\mathbb{R}^{n}$. It is natural to choose the Riemannian metric as the restriction of the Euclidean inner product on $\mathbb{R}^{n}$ to $T_{x} \mathcal{M}$, i.e., $\langle\eta, \xi\rangle_{x}=\eta^{T} \xi$ for all $\eta, \xi \in T_{x} \mathcal{M}$ and we call $\mathcal{M}$ a Riemannian submanifold of $\mathbb{R}^{n}$ in this case.

For a Riemannian manifold $\mathcal{M}$, we can define the Riemannian gradient of a smooth function $f: \mathcal{M} \rightarrow \mathbb{R}$ as follows. The Riemannian gradient $\operatorname{grad}_{\mathcal{M}} f(x) \in T_{x} \mathcal{M}$ is the unique tangent vector satisfying
\begin{equation*}
    \langle\operatorname{grad}_{\mathcal{M}} f(x), \xi\rangle_{x}=\mathrm{D} f(x)[\xi] = \frac{d}{dt} f(\gamma(t))\big|_{t=0}, \quad \forall \xi \in T_{x} \mathcal{M}
\end{equation*}
where $\mathrm{D} f(x)[\xi]$ is the directional derivative of $f$ at $x$ along $\xi$ and $\gamma:(-\epsilon, \epsilon) \rightarrow \mathcal{M}$ is any smooth curve satisfying $\gamma(0)=x$ and $\dot{\gamma}(0)=\xi$.
If $\mathcal{M}$ is a Riemannian submanifold of $\mathbb{R}^{n}$, then the Riemannian gradient can be computed via the projection of the Euclidean gradient onto the tangent space, i.e.,
\begin{equation*}
    \operatorname{grad}_{\mathcal{M}} f(x)=P_{T_{x} \mathcal{M}} \nabla \tilde{f}(x)
\end{equation*}
where $\tilde{f}: \mathcal{U} \rightarrow \mathbb{R}$ is any smooth extension of $f$ around $x$.
The Riemannian Hessian of $f$ at $x$ is defined as the linear operator $\operatorname{Hess}_{\mathcal{M}} f(x): T_{x} \mathcal{M} \rightarrow T_{x} \mathcal{M}$ such that
\begin{equation*}
    \operatorname{Hess}_{\mathcal{M}} f(x)[\xi] = \nabla_{\xi} \operatorname{grad}_{\mathcal{M}} f(x), \quad \forall \xi \in T_{x} \mathcal{M}
\end{equation*}
where $\nabla$ is the Riemannian connection on $\mathcal{M}$.
When $\mathcal{M}$ is a Riemannian submanifold of $\mathbb{R}^{n}$, the Riemannian Hessian can be computed via the projection of the Euclidean Hessian onto the tangent space, i.e.,
\begin{equation*}
    \operatorname{Hess}_{\mathcal{M}} f(x)[\xi]=P_{T_{x} \mathcal{M}} (\nabla^{2} \tilde{f}(x)[\xi]), \quad \forall \xi \in T_{x} \mathcal{M}
\end{equation*}
where $\tilde{f}: \mathcal{U} \rightarrow \mathbb{R}$ is any smooth extension of $f$ around $x$.

\section{Proofs}
\label{sec: appendixA}
%
\paragraph{Proof of Lemma~\ref{manifold-identification}}
    \begin{proofs}
        If $x_k$ is a stationary point, then it never moves away, i.e., $x_k = x^*$, and the lemma holds. W.l.o.g, suppose that $x_k$ is
        not stationary. Since $x^*$ is a limit point, then there exists $k$ large enough such that $\exists \delta_1 > 0$ small enough and $x_k \in B(x^*,\delta_1)$, 
        $\pi_k = \epsilon_k$ is small enough, $|x_k^i|$ is small enough and $|g_k^i|< \gamma$ for any $i \in \mathcal{A}(x^*)$.
        Therefore,
        \begin{equation*}
            \operatorname{prox}_{\gamma |\cdot|}(x_k^i- g_k^i)=0, \quad \forall i \in \mathcal{A}(x^*), x_k \in B(x^*,\delta_1),
        \end{equation*}
        which indicates that
        \begin{equation*}
            |x_k^i|=|x_k^i-\operatorname{prox}_{\gamma |\cdot|}(x_k^i- g_k^i)| \leq \pi_k = \epsilon_k, \quad \forall i \in \mathcal{A}(x^*), x_k \in B(x^*,\delta_1).
        \end{equation*}
        Then
        $
            \mathcal{A}(x^*) \subseteq I_k^+.
        $
        There exists $\delta_2$, such that when $x_k \in B(x^*,\delta_2)$ and $\forall i \notin \mathcal{A}(x^*)$,
        \begin{equation*}
            |x_k^i| > \epsilon_k=\pi_k.
        \end{equation*}
        Then
        $
            I_k^+ \subseteq \mathcal{A}(x^*).
        $
        Therefore, $I_k^+ = \mathcal{A}(x^*)$. Next we prove that $\mathcal{A}(x_{k+1}) = I_k^+$. Indeed, let $b_3 \leq \min\{\delta_1, \delta_2\}$ such that for all $x_k \in B(x^*,b_3)$, $I_k^+ = \mathcal{A}(x^*)$ and $\| \bar g_k + \bar \omega_{k} \|$ small enough. 
        Combine (\ref{ineq: pkbd2}) with the fact that $\| \bar g_k + \bar \omega_{k} \|$ small enough and $t_k \leq 1$, we have $t_k \| \bar p_k \|$ can be arbitary small.
    
        Also note that, if $b_3$ is small enough, there exists a constant $\bar{\epsilon} > 0$ such that
        \begin{equation*}
            |x_k^i| > \bar{\epsilon}, \quad \forall i \notin \mathcal{A}(x^*) \text{ i.e. } i \in I_k^-
        \end{equation*}
        Therefore, if $b_3$ is small enought, for any $i \in I_k^-$, $\operatorname{sign}(x_k^i) = \operatorname{sign}(x_k^i-t_k p_k^i)$, which indicates that
        \begin{equation*}
            I_k^- \subseteq \operatorname{supp}(x_{k+1}) \implies \mathcal{A}(x_{k+1}) \subseteq I_k^+.
        \end{equation*}
        Last but not least, we need to prove that $I_k^+ \subseteq \mathcal{A}(x_{k+1})$. By similar arguments from Lemma~\ref{lm: tbd-} and Theorem~\ref{thm: suffdescent} we know that
        \begin{align}\label{ineq: tlb1}
            t_k > \beta \min \left\{\frac{ 2(1-\sigma)}{L_g}, \frac{2}{L_g}(1-\sigma)(1-\tau)\mu_k, \frac{\bar{\epsilon}}{G}, 1\right\}.
        \end{align}
        Note that when $b_3$ is small enough, by strict complementarity, for any $i \in I_k^+$, there exists $\bar \epsilon_i$ such that $| g_k^i | \le \gamma - \bar \epsilon_i$. Therefore, for
        any $i \in I_k^+$, as long as
        \begin{align}\label{ineq: tlb2}
            t_k \ge |x_k^i|/(\gamma - |g_k^i|),
        \end{align}
        we have that $x_{k+1}^i = 0$ by the update rule. Note that when $b_3$ is small enough, right-hand side in \eqref{ineq: tlb2} is less 
        than any constant. Therefore, we only need to argue that for any $i \in I_k^+$,
        \begin{align*}
            \frac{2\beta}{L_g}(1-\sigma)(1-\tau)\mu_k \ge |x_k^i|/(\gamma - |g_k^i|).
        \end{align*}
        Note that since $b_3 \le \delta_1$, we have that
        \begin{align*}
            \mu_k \ge c\| x_k^+ - \operatorname{prox}_h(x_k^+ - g_k^+) \|^\delta = c\| x_k^+ \|^\delta \ge c| x_k^i|^\delta, \quad \forall i \in I_k^+.
        \end{align*}
        Therefore we only need to show that 
        \begin{align*}
            c\frac{2\beta}{L_g}(1-\sigma)(1-\tau)| x_k^i |^\delta \ge |x_k^i|/(\gamma - |g_k^i|)
            \Longleftrightarrow |x_k^i|^{\delta-1} \ge \frac{L_g/(2\beta)}{c(1-\sigma)(1-\tau)(\gamma-|g_k^i|)}.
        \end{align*}
        This holds since $\delta \in (0,1)$ and when $b_3$ is small enough. Since there are finite elements in $I_k^+$, we have that
        $x_{k+1}^i = 0$ for any $i \in I_k^+$ if $b_3$ is small enough and we conclude $I_k^+ \subseteq \mathcal{A}(x_{k+1})$.
    \end{proofs}

Next we introduce a lemma useful in our later proofs. 
\begin{lemma}[\citep{boumal2023intromanifolds}]\label{lm: liphess} 
    If $f: \mathcal{M} \rightarrow \mathbb{R}$ has Lipschitz continuous Hessian, then there exists $L_H > 0$ such that
    \begin{equation}\label{ineq: liphess1}
        \left|f\left(\operatorname{Exp}_{x}(s)\right)-f(x)-\langle s, \operatorname{grad} f(x)\rangle-\frac{1}{2}\langle s, \operatorname{Hess} f(x)[s]\rangle\right| \leq \frac{L_H}{6}\|s\|^{3}
    \end{equation}
    and
    \begin{equation}\label{ineq: liphess2}
        \left\|P_{s}^{-1} \operatorname{grad} f\left(\operatorname{Exp}_{x}(s)\right)-\operatorname{grad} f(x)-\operatorname{Hess} f(x)[s]\right\| \leq \frac{L_H}{2}\|s\|^{2}
    \end{equation}
    for all $(x, s)$ in the domain of the exponential map $\operatorname{Exp}$ and $P_{s}$ denotes parallel transport along
    $\gamma(t) = \operatorname{Exp}_{x}(ts)$ from $t = 0$ to $t = 1$. If $\sM = \sM_I$ is a Riemannian submanifold of $\mathbb{R}^n$, then ${\rm Exp}_x(s) = x+ s$ and $P_s = I$.
\end{lemma}

\paragraph{Proof of Lemma~\ref{eb-pk}}
    \begin{proofs}
        W.l.o.g suppose that $x_k$ is not stationary. By Lemma~\ref{manifold-identification} and $b_4 \le b_3$,
        \begin{equation*}
            \mathcal{A}(x^*) = \mathcal{A}(x_{k}) = I_{k}^+ = \mathcal{A}(x_{k+1})
        \end{equation*}
        Therefore, $[x_k - \operatorname{prox}_h(x_k - g_k)]_{I_k^+} = 0$. Furthermore, when $b_4 \le b_1$, manifold EB holds and we have
        \begin{equation}
            \label{mu-lowerbound}
            \mu_k =c \left\|\begin{bmatrix}
                0 \\
                [g_{k}+\omega_k]_{I_k^-}
               \end{bmatrix} \right\|^\delta = c \|\operatorname{grad}_{\mathcal{M}_{*}} \psi(x_k)\|^\delta \geq \frac{c}{c_1^\delta} \operatorname{dist}\left(x_k, X^{*} \cap \mathcal{M}_{*}\right)^\delta.
        \end{equation}
        Let $ \tilde{X}^{*}:=X^{*} \cap \mathcal{M}_* $ and define 
\begin{align}\label{def: xkproj}
\Pi_{\tilde{X}^{*}}(x) \triangleq  \operatorname{argmin}_{y \in \tilde{X}^{*}}\|y-x\|, \;  x_k^* = \Pi_{\tilde{X}^{*}}(x_k).
\end{align}        
        Note that $\left\| x_k^* -x^{*}\right\| \leq\left\| x_k^* -x_{k}\right\|+\left\|x_{k}-x^{*}\right\| \leq 2\left\|x_{k}-x^{*}\right\| \leq b_2$. 
        Therefore, we can apply Assumption~\ref{asp: liphess} and results in Lemma~\ref{lm: liphess}.  Note that ${\rm grad}_{\mathcal{M}_*}\psi(x_k^*) = 0$ and $[\operatorname{Hess}_{\mathcal{M}_{*}} \psi(x_k)(x_k- x_k^* )]_{I_k^-} = H_k(x_k - x_k^* )_{I_k^-}$ since $x_k$ and $x_k^*$ 
        are both on $\mathcal{M}_*$. Therefore,
        \begin{equation*}
            \begin{aligned}
                \|\bar{p}_k\| & = \|(H_k + \mu_k I)^{-1}(\bar{g}_k+\bar{\omega}_{k} + \bar{r}_k )\| \\
                & \leq \|(H_k + \mu_k I)^{-1}\|\|\operatorname{grad}_{\mathcal{M}_{*}} \psi(x_k) - \operatorname{grad}_{\mathcal{M}_{*}} \psi( x_k^* ) - \operatorname{Hess}_{\mathcal{M}_{*}} \psi(x_k)(x_k- x_k^* )\|\\
                & + \|(H_k + \mu_k I)^{-1}H_k (x_k- x_k^* )_{I_k^-}\| +\|(H_k + \mu_k I)^{-1}\bar{r}_k \| \\
                & \overset{\eqref{ineq: liphess2}}{\le} \frac{1}{2} L_H \mu_k^{-1}\|x_k - x_k^* \|^2 + \|(H_k + \mu_k I)^{-1}H_k \|\|(x_k-x_k^* )\| + \mu_k^{-1}\| \bar{r}_k \|.
            \end{aligned}
        \end{equation*}
        Note that
        \begin{equation*}
            \|(H_k + \mu_k I)^{-1}H_k \| = \|I - (H_k + \mu_k I)^{-1}\mu_k I \| \leq \left(\|I\|+\left\|\left(H_{k}+\mu_{k} I\right)^{-1} \mu_{k} I\right\|\right) \leq 2.
        \end{equation*}
        Therefore, we have
        \begin{equation*}
            \begin{aligned}
                (1-\tau)\left\|\bar{p}_{k}\right\|
                & \leq \frac{1}{2} L_{H} c_{1}^{\delta} c^{-1} \operatorname{dist}\left(x_{k}, X^{*} \cap \mathcal{M}_{*}\right)^{2-\delta}+2 \operatorname{dist}\left(x_{k}, X^{*} \cap \mathcal{M}_{*}\right) \\
                & \leq\left(\frac{1}{2} L_{H} c_{1}^{\delta}c^{-1}+2\right) \operatorname{dist}\left(x_{k}, X^{*} \cap \mathcal{M}_{*}\right).
            \end{aligned}
        \end{equation*}
        This concludes the proof.
    \end{proofs}
    
\paragraph{Proof of Lemma~\ref{line-search-tk}}
    \begin{proofs}
        Since $b_5 \leq b_4$, $\|\bar{p}_{k}\|$ small enough such that for all $x_k \in B(x^*, b_5)$, we have
        \begin{equation*}
            \operatorname{sign}(x_k^i) = \operatorname{sign}(x_k^i-p_k^i), \quad \forall i \in I_k^-.
        \end{equation*}
        Define
        \begin{align}\label{def: xhat}
        [s_k]_{I_k^+} = 0,\quad [s_k]_{I_k^-} = \bar p_k,\quad \hat{x}_{k+1} = x_k - s_k.
        \end{align}
        Then 
        \begin{align*}
            & \psi\left(\hat{x}^{k+1}\right)-\psi\left(x^{k}\right) \\
            \leq & \frac{L_{H}}{6}\left\|s_{k}\right\|^{3}-\langle s_k, \operatorname{grad}_{\mathcal{M}_*} f(x_k)\rangle+  \frac{1}{2}\langle s_k, \operatorname{Hess}_{\mathcal{M}_*} f(x_k)[s_k]\rangle+\omega_{k}^{T}\left(\hat{x}^{k+1}-x^{k}\right) \\
            = & \frac{L_{H}}{6}\left\|\bar{p}_k\right\|^{3}+\frac{1}{2}\bar{p}_k^{T} H_k \bar{p}_k - (\bar{g}_k+\bar{\omega}_{k})^{T} \bar{p}_k \\
            = & \frac{L_{H}}{6}\left\|\bar{p}_k\right\|^{3}+\frac{1}{2}\bar{p}_k^{T} H_k \bar{p}_k - \bar{p}_k^{T}\left((H_k+\mu_k I)\bar{p}_k -\bar{r}_k \right) \\
            \leq & \frac{L_{H}}{6}\left\|\bar{p}_k\right\|^{3} - (1-\tau) \mu_k \|\bar{p}_k\|^2
        \end{align*}
        Therefore a sufficient condition for linesearch to stop at $t_k = 1$ is 
        \begin{equation*}
            \frac{L_{H}}{6}\left\|\bar{p}_k\right\| - (1-\tau) \mu_k \leq -\sigma(1-\tau) \mu_k \Longleftrightarrow
            \frac{L_{H}}{6}\left\|\bar{p}_k\right\| \leq (1-\sigma)(1-\tau) \mu_k.
        \end{equation*}
        Note that from Lemma~\ref{eb-pk} and \eqref{mu-lowerbound}, for $\delta \in (0,1)$, this sufficient condition can be satisfied when $b_5$ is small enough.
    \end{proofs}

    

\paragraph{Proof of Lemma~\ref{eb-point}}

    \begin{proofs}
        Note that we have $x_{k+1} = \hat{x}_{k+1}$ defined in \eqref{def: xhat}.  Moreover,
        \begin{align*}
            & \|\operatorname{grad}_{\mathcal{M}_{*}} \psi(x_k) - \operatorname{grad}_{\mathcal{M}_{*}} \psi(x_{k+1}) - \operatorname{Hess}_{\mathcal{M}_{*}} \psi(x_k)(x_k-x_{k+1})\| \leq \frac{1}{2} L_H \|\bar{p}_k\|^2,  \\
            & \mu_k = c \|\operatorname{grad}_{\mathcal{M}_{*}} \psi(x_{k})\|^\delta = c\|\operatorname{grad}_{\mathcal{M}_{*}} \psi(x_{k}) - \operatorname{grad}_{\mathcal{M}_{*}} \psi(x_k^*)\|^\delta \leq c L_\psi^\delta \|x_k- x_k^* \|^{\delta},
        \end{align*}
where $\bar x_k$ is defined as in \eqref{def: xkproj}.  Therefore, combine the above inequalities with manifold EB, Lemma~\ref{eb-pk}, and definition of $\bar{r}_k$, we have
        \begin{equation*}
            \begin{aligned}
                \operatorname{dist}\left(x_{k+1}, X^{*} \cap \mathcal{M}_{*}\right) & \leq c_1 \|\operatorname{grad}_{\mathcal{M}_{*}} \psi(x_{k+1})\| \\
                & \leq c_1 \|\bar{g}_k+\bar{\omega}_{k}-H_k \bar{p}_k\| + \frac{c_1}{2} L_H \|x_k-x_{k+1}\|^2\\
                & = c_1 \|\mu_k \bar{p}_k - \bar{r}_k \| + \frac{c_1}{2} L_H \|x_k-x_{k+1}\|^2\\
                & \leq \mathcal{O}(\operatorname{dist}\left(x_k, X^{*} \cap \mathcal{M}_{*}\right))^{1+\delta}+\mathcal{O}(\operatorname{dist}\left(x_k, X^{*} \cap \mathcal{M}_{*}\right))^2\\
                & \leq \mathcal{O}(\operatorname{dist}\left(x_k, X^{*} \cap \mathcal{M}_{*}\right))^{1+\delta}.
            \end{aligned}
        \end{equation*}
        This concludes the proof.
    \end{proofs}

%

\paragraph{Proof of Lemma~\ref{approaching}}
    \begin{proofs}
        Let 
        \begin{equation*}
            r \triangleq \min \left\{\frac{1}{2 }c_3^{-\frac{1}{\delta} }, \frac{b_6}{1+c_2 +2c_2\frac{2^{-\delta}}{1-2^{-\delta}}} \right\} < b_6,
        \end{equation*}
        where $c_2$ is the constant in Lemma~\ref{eb-pk} and $c_3$ is the constant in Lemma~\ref{eb-point} that represent $\mathcal{O}(\cdot)$.
        Suppose that $x_{k} \in B(x^*, r)$ and $x_k, x_{k+1} \in \sM_*$, then by Lemma~\ref{eb-pk}
        \begin{equation*}
                \|x_{k+1}-x^*\| \leq \|x_{k}-x^*\| + \|\bar{p}_{k}\| \leq r + c_2 \|x_k - x_k^* \| \leq r + c_2 r \leq b_6,
        \end{equation*}
        where $x_k^*$ is defined as in \eqref{def: xkproj}.  Since $b_6 \le b_3$, $x_{k+2} \in \sM_*$. Suppose $x_{k+j} \in B(x^*, b_6)$ for $1 \leq j \leq i$ and $x_{k+j} \in \sM_*$ for $1 \leq j \leq i+1$.   From Lemma~\ref{eb-point} (let $q = 1+\delta$) we have
        \begin{equation*}
            \|x_{k+j}- {x}_{k+j}^* \| \leq c_3 \|x_{k+j-1}- {x}_{k+j-1}^* \|^{q} \leq \ldots \leq c_3^{\frac{1-q^j}{1-q}} \|x_{k}- {x}_{k}^* \|^{q^j} \leq c_3^{\frac{1-q^j}{1-q}} r^{q^j} \leq 2r 2^{-q^j},
        \end{equation*}
        where the last inequality is by the definition of $r$. Note
        \begin{align}
        \notag
                        \|x_{k+i+1}-x^*\| & \leq \|x_{k+1}-x^*\| + \sum_{j=1}^{i} \|x_{k+j+1}-x_{k+j} \| = \|x_{k+1}-x^*\| + \sum_{j=1}^{i} \|\bar{p}_{k+j}\|\\
                \label{ineq: bdseq}
                & \leq \|x_{k+1}-x^*\| + \sum_{j=1}^{i} c_2 \|x_{k+j}-{x}_{k+j}^* \| \leq r + c_2 r + 2c_2 r \sum_{j=1}^{i} 2^{-q^j}.
        \end{align}
        By noticing
        \begin{equation*}
            (1+ \delta)^j \geq 1+ \delta j \geq \delta j \implies  \sum_{j=1}^{i} 2^{-q^j} \leq \sum_{j=1}^{\infty} 2^{-\delta j},
        \end{equation*}
and combine \eqref{ineq: bdseq}, we have
                \begin{equation*}
            \|x_{k+i+1}-x^*\| \leq r + c_2 r + 2c_2 r \sum_{j=1}^{\infty} 2^{-\delta j} \leq r + c_2 r + 2c_2 r \frac{2^{-\delta}}{1-2^{-\delta}} \leq b_6,
        \end{equation*}
        which means that $x_{k+i+1} \in B(x^*, b_6)$ and $x_{k+i+2} \in \sM_*$ since $b_6 \le b_3$, and the proof is completed by induction.
    \end{proofs}

    

\paragraph{Proof of Lemma~\ref{lm: Cauchy}}
    \begin{proofs}
        From manifold EB and Theorem~\ref{global-convergence}, we have $\operatorname{dist}\left(x_k, X^{*} \cap \mathcal{M}_{*}\right) \rightarrow 0 .$
        Thus, for $\epsilon > 0$ and $\rho < 1$, there exists $K$ such that for all $k \geq K$, we have $x_k \in B(x^*,b_6)$,  and 
        \begin{equation*}
            \operatorname{dist}\left(x_k, X^{*} \cap \mathcal{M}_{*}\right) < \epsilon \quad \text{and} \quad \operatorname{dist}\left(x_{k+1}, X^{*} \cap \mathcal{M}_{*}\right) \leq \rho \operatorname{dist}\left(x_k, X^{*} \cap \mathcal{M}_{*}\right).
        \end{equation*}
        Therefore, for $k_1> k_2 \geq K$, we have
        \begin{equation}
            \label{eq: cauchy}
            \begin{aligned}
                \|x_{k_1}-x_{k_2}\| & \leq \sum_{i=k_2}^{k_1-1}\|x_{i+1}-x_{i}\| \leq \sum_{i=k_2}^{k_1-1}\|\bar{p}_i\| \leq \sum_{i=k_2}^{k_1-1}c_2 \operatorname{dist}\left(x_i, X^{*} \cap \mathcal{M}_{*}\right) \\
                & \leq \sum_{i=0}^{\infty}c_2 \rho^i \operatorname{dist}\left(x_{k_2}, X^{*} \cap \mathcal{M}_{*}\right) 
                 \leq \frac{c_2}{1-\rho}\operatorname{dist}\left(x_{k_2}, X^{*} \cap \mathcal{M}_{*}\right)
                \leq \frac{c_2}{1-\rho} \epsilon.
            \end{aligned}    
        \end{equation}
        Therefore, $\{x_k\}$ is Cauchy sequence and hence converges to $x^*$ since $x^*$ is a limit point. Then, by taking $k_2 = k$ and $k_1 \rightarrow \infty$ in (\ref{eq: cauchy}), we have
        \begin{equation*}
            \|x_{k}-x^*\| \leq \frac{c_2}{1-\rho}\operatorname{dist}\left(x_k, X^{*} \cap \mathcal{M}_{*}\right).
        \end{equation*}
        Together with Lemma~\ref{eb-point}, we have for all $k \ge K$,
        \begin{align*}
            \|x_{k+1}-x^*\| & \leq \mathcal{O}(\operatorname{dist}\left(x_{k+1}, X^{*} \cap \mathcal{M}_{*}\right)) \leq \mathcal{O}(\operatorname{dist}\left(x_{k}, X^{*} \cap \mathcal{M}_{*}\right))^{1+\delta} \\ 
            & \leq \mathcal{O}(\|x_{k}-x^*\|^{1+\delta}).
        \end{align*}
        This concludes the proof. 
    \end{proofs}

\end{appendices}




\end{document}